\providecommand{\U}[1]{\protect\rule{.1in}{.1in}}
\newtheorem{theorem}{Theorem}
\newtheorem{corollary}[theorem]{Corollary}
\newtheorem{definition}[theorem]{Definition}
\newtheorem{lemma}[theorem]{Lemma}
\newtheorem{proposition}[theorem]{Proposition}
\newtheorem{remark}[theorem]{Remark}
\newenvironment{proof}[1][Proof]{\noindent \textbf{#1.} }{\  \rule{0.5em}{0.5em}}
\begin{document}
	
	\title{\textbf{The relaxed maximum principle for G-stochastic control systems with
			controlled jumps }}
	\author{H. B. Gherbal\thanks{Laboratory of Mathematical Analysis, Probability and
			Optimization, University of Mohamed Khider, P.O.Box 145, Biskra 07000,
			Algeria. Email: h.bengherbal@yahoo.com}
		\and A. Redjil \thanks{Lab. of Probability and Statistics (LaPS) Department of
			Mathematics, Badji Mokhtar University B.P.12 23000, Annaba, Algeria. E-mail:
			$rdj\_amel@yahoo.fr$}
		\and O. Kebiri \thanks{Corresponding author: Brandenburgische Technische
			Universit\"{a}t Cottbus-Senftenberg, Germany, E-mail: omar.kebiri@b-tu.de,
			Tel: +49 (0) 355 69 2327. Fax: +49 (0) 355 69 3595} }
	\date{}
	\maketitle
	
	\begin{abstract}
		This paper is concerned with optimal control of systems driven by G-stochastic
		differential equations (G-SDEs), with controlled jump term. We study the
		relaxed problem, in which admissible controls are measure-valued processes and
		the state variable is governed by an G-SDE\ driven by a counting measure
		valued process called relaxed Poisson measure such that the compensator is a
		product measure. Under some conditions on the coefficients, using the
		G-chattering lemma, we show that the strict and the relaxed control problems
		have the same value function. Additionally, we derive a maximum principle for
		this relaxed problem.
		
	\end{abstract}
	
			\textbf{Keywords: }Relaxed optimal control, $G$-{B}rownian motion, sublinear
	expectation, stochastic control, relaxed maximum principle, jump process.
	
	\textbf{MSC 2010 Mathematics Subject Classification}
	
	93E20, 60H07, 60H10, 60H30.
	
	\section{\textbf{Introduction}}
	
	We consider a stochastic control problem where the state variable is a
	solution of a SDE driven by a $G$-Brownian motion with jumps, the control
	enters both the drift and the jump term. More precisely the system evolves
	according to the SDE
	\begin{equation}
		\left\{
		\begin{array}
			[c]{l}%
			{\small dx}_{t}{\small =b(t,x}_{t}{\small ,u}_{t}{\small )dt+\sigma(t,x}%
			_{t}{\small )dB}_{{\small t}}{\small +\gamma(t,x_{t},u_{t})d\langle B\rangle
			}_{t}{\small +}%
			{\displaystyle\int\limits_{\Gamma}}
			{\small f(t,x}_{t^{-}}{\small ,\theta,u}_{t}{\small )}\widetilde{{\small N}%
			}{\small (dt,d\theta)}\\
			{\small x}_{0}{\small =x}%
		\end{array}
		\right.  , \label{SDE1}%
	\end{equation}

	on some space of sublinear expectation $(\Omega,H,\widehat{\mathbb{E}%
	},\mathbb{F}^{\mathcal{P}})$, where $\mathbb{F}^{\mathcal{P}}$ is the
	universal filtration$,$ and $b,$ $\sigma,$ $\gamma,$ $f$ are given
	deterministic functions, $u$ is the control process. We consider here an
	independent Poisson random measure $N$; whose compensator is given by
	$v(d\theta)dt.$
	
	The expected cost to be minimized over the class of admissible controls is
	defined by:
	\begin{equation}
		\label{functional}J(x;u)=\sup_{\mathbb{P}\in\mathcal{P}}\mathbb{E}%
		^{\mathbb{P}}\left[  g(x_{T})+
		{\displaystyle\int\limits_{0}^{T}}
		h(t,x_{t},u_{t})dt\right]  =\widehat{E}\left[  g(x_{T})+
		{\displaystyle\int\limits_{0}^{T}}
		h(t,x_{t},u_{t})dt\right]  ,
	\end{equation}
	where $x$ is the initial condition of the process $(x_{t})_{t\in[0,T]}$
	
	We defined then the value function $V$ by:
	\begin{equation}
		\label{valueFunction}V(x):=\inf_{u\in\mathcal{U}}J(x;u),
	\end{equation}
	where $\mathcal{U}$ is the set of admissible controls
	
	A control process that verify \eqref{valueFunction} is called optimal.
	
	In the recent years the framework of G-expectation has found increasing
	application in the domain of finance and economics, e.g., Epstein and Ji
	\cite{EJ13,EJ14} study the asset pricing with ambiguity preferences, Beissner
	\cite{B13} who studies the equilibrium theory with ambiguous volatility, and
	many others see e.g. \cite{YH16,BD18,YH18}, also see
	\cite{KNH18a,KNH18b,HKNR19}.for numerical methods. The motivation is that many
	systems are subject to model uncertainty or ambiguity due to incomplete
	information, or vague concepts and principles. Aspects of model ambiguity such
	as volatility uncertainty have been studied by Peng (2007, 2008, 2010,
	\cite{P07,P08,P10}) who introduced a sublinear expectation with a process
	called $G$-Brownian motion, also by Denis and Martini \cite{DM06} who
	suggested a structure based on quasi-sure analysis from abstract potential
	theory to construct a similar structure using a tight family $\mathcal{P}$ of
	possibly mutually singular probability measures.
	
	The strict control problem may fail to have an optimal solution, if we don't
	impose some kind of convexity assumption. In this case, we must embed the
	space of strict controls into a larger space that has nice properties of
	compactness and convexity. This space is that of probability measures on $A$,
	where $A$ is the set of values taken by the strict control. These measures
	valued processes are called relaxed controls. In the classical framework, the
	first existence result of an optimal relaxed control is proved by Fleming
	\cite{Fl}, for the SDEs with uncontrolled diffusion coefficient and no jump
	term. For such systems of SDEs a maximum principle has been established in
	\cite{BDM, BahDjeMez, BahMez}. The case where the control variable appears in
	the diffusion coefficient has been solved in \cite{El-karoui}. The existence
	of an optimal relaxed control of SDEs, where the control variable enters in
	the jump term was derived by Kushner \cite{Kushner-article}, also recently the
	work given by H. B Gherbal and B.\ Mezerdi in 2017 \cite{HBG} of relaxed
	stochastic maximum principle in optimal control of diffusions with controlled
	jumps, for which the state variable is governed by a SDE driven by a counting
	measure valued process called relaxed Poisson measure, where the existence of
	an optimal relaxed control and a Pontryagin maximum principle were proved.
	
	\bigskip
	
	In the $G$-framework the existence of an optimal relaxed control is
	established In 2018, by Redjil and Choutri \cite{RC18}, where a stochastic
	differential equation is considered without jump term and an uncontrolled
	diffusion coefficient, the celebrate Chattering lemma was generalized in the
	$G$-framework and the existence of relaxed optimal control was proved. The
	same result in the case with jump term is recently proved by A. Redjil, H. B.
	Gherbal and O. Kebiri \cite{RBK}.
	
	In this paper, we establish a Pontryagin maximum principle for the relaxed
	control problem given by (\ref{SDE}) and (\ref{cost}). More precisely we
	derive necessary conditions for optimality satisfied by an optimal control.
	The proof is based on the results obtained in \cite{RBK}, Pontryagin's maximum
	principle for nearly optimal strict controls and some stability results of
	trajectories and adjoint processes with respect to the control variable.
	
	The motivation of our work came from e.g. applications in finance when a jump
	process models the stock price where we can't estimate exactly the
	coefficients of the noise. The uncertainty in the noise coefficient will
	produce a G-SDE with jump, if then we want to control this dynamic, this lead
	a controlled G-SDE with jump.
	
	The rest of the paper is organized as follows: in section 2, we formulate the
	control problem, and introduce the assumptions of the model. Section 3 is
	devoted to the proof of the approximation and stability results. In the last
	section, we state and prove a maximum principle for our relaxed control
	problem, which is the main result of this paper.
	
	\section{Formulation of the problem}
	
	\subsection{G-Strict control problem}
	
	We consider a control problem of systems governed by stochastic differential
	equations on some sublinear expectation space $(\Omega,H$,$\widehat{E}%
	,F^{\mathcal{P}})$, such that $F^{\mathcal{P}}$ the universal filtration
	defined by $F^{\mathcal{P}}=\left\{  \widehat{\mathcal{F}}_{t}^{\mathcal{P}%
	}\right\}  _{t\geq0},$ where $\widehat{\mathcal{F}}_{t}^{\mathcal{P}}=%
	{\displaystyle\bigcap\limits_{P\in\mathcal{P}}}
	\left(  \mathcal{F}_{t}^{P}\vee\mathcal{N}_{\mathcal{P}}\right)  $ for
	$t\geq0,$ such that $\mathcal{F}_{t}^{P}$.generated by a $G-$Brownian motion
	$B$ and an independent Poisson measure $N$, with compensator $\nu(d\theta)dt$,
	where the jumps are confined to a compact set $\Gamma.$ and set
	\[
	\widetilde{N}{\small (dt,d\theta)=N(dt,d\theta)-\nu(d\theta)dt}%
	\]

	Consider a compact set $A$ in $%
	\mathbb{R}
	^{k}$ and let $\mathcal{U}$ the class of measurable, adapted processes
	$u:\left[  0;T\right]  \times\Omega\longrightarrow A$, such that $u\in
	M_{G}^{2}\left(  0,T\right)  .$ For any $u$ $\in\mathcal{U}$, we consider the
	following stochastic differential equation (SDE)
	\begin{equation}
		\left\{
		\begin{array}
			[c]{l}%
			{\small dx}_{t}{\small =b(t,x}_{t}{\small ,u}_{t}{\small )dt+\sigma(t,x}%
			_{t}{\small )dB}_{{\small t}}{\small +\gamma(t,x_{t},u_{t})d\langle B\rangle
			}_{t}{\small +}%
			{\displaystyle\int\limits_{\Gamma}}
			{\small f(t,x}_{t^{-}}{\small ,\theta,u}_{t}{\small )}\widetilde{{\small N}%
			}{\small (dt,d\theta)}\\
			{\small x}_{0}{\small =x,}%
		\end{array}
		\right.  \label{SDE}%
	\end{equation}

	where
	\begin{align*}
		b  &  :\left[  0;T\right]  \times%
		\mathbb{R}
		^{n}\times A\longrightarrow%
		\mathbb{R}
		^{n}\\
		\sigma &  :\left[  0;T\right]  \times%
		\mathbb{R}
		^{n}\longrightarrow\mathcal{M}_{n\times d}(%
		\mathbb{R}
		)\\
		\gamma &  :\left[  0;T\right]  \times%
		\mathbb{R}
		^{n}\times A\longrightarrow%
		\mathbb{R}
		^{n\times n}\\
		f  &  :\left[  0;T\right]  \times%
		\mathbb{R}
		^{n}\times\Gamma\times A\longrightarrow%
		\mathbb{R}
		^{n}%
	\end{align*}

	are bounded, measurable and continuous functions.
	
	The expected cost is given by%
	\begin{equation}
		J(u)=\sup_{\mathbb{P}\in\mathcal{P}}\mathbb{E}^{\mathbb{P}}\left[  g(x_{T})+%
		{\displaystyle\int\limits_{0}^{T}}
		h(t,x_{t},u_{t})dt\right]  =\widehat{E}\left[  g(x_{T})+%
		{\displaystyle\int\limits_{0}^{T}}
		h(t,x_{t},u_{t})dt\right]  \label{cost}%
	\end{equation}

	where
	\begin{align*}
		g  &  :%
		\mathbb{R}
		^{n}\longrightarrow%
		\mathbb{R}%
		\\
		h  &  :\left[  0;T\right]  \times%
		\mathbb{R}
		^{n}\times A\longrightarrow%
		\mathbb{R}%
	\end{align*}

	be bounded and continuous functions.
	
	The problem is to minimize the functional $J(.)$ over $\mathcal{U}$. A control
	that solves this problem is called optimal.
	
	We note that from the result of \cite{Pa12}, equation (\ref{SDE}) has a unique
	solution, under these assumptions (A) :
	
	(A1) Let be $b,$ $\sigma,$ $\gamma$ and $f$ bounded and Lipschitz continuous
	with respect to the state variable $x$ uniformly in $(t,u)$, also we suppose
	that $\gamma(t,x,.)$ is a symmetric $d\times d$ matrix with each element.
	
	(A2) For all $(t,x,\theta)\in\lbrack0,T]\times\mathbb{R}^{n}\times\Gamma$ the
	functions $b(t,x,.),$ $f(t,x,\theta,.)$ and $\gamma(t,x,.)$ are continuous in
	$u\in U$.
	
	(A3) $b(.,x,.)$ and $\gamma(.,x,.)$ $\ $and $\sigma(.,x)$ taking value in
	$M_{G}^{2}(0,T)$ and $f(.,x,.,.)$ takes value in $\hat{H}_{G}^{2}(0,T)$
	
	(A4) The functions $g$ and $h(.,x,.)$ are taking value in $M_{G}^{2}(0,T)$ and
	bounded. Moreover we suppose that $g$ is Lipschitz continuous, and $h$ is
	Lipschitz continuous with respect to the state variable $x$ uniformly in time
	and control $(t,u).$
	
	\subsection{The G-relaxed control problem}
	
	Let $(A,d)$ be a separable metric space and $\mathcal{P}(A)$ be the space of
	probability measures on the set $A$ endowed with its Borel $\sigma$-algebra
	$\mathcal{B}(A)$. The class $M([0,T]\times A)$ of relaxed controls we consider
	in this paper is a subset of the set $M([0,T]\times A)$ of Radon measures
	$\nu(dt,da)$ on $[0,T]\times A$ equipped with the topology of stable
	convergence of measures, whose projections on $[0,T]$ coincide with the
	Lebesgue measure $dt$, and whose projection on $A$ coincide with some
	probability measure $\mu_{t}(da)\in\mathcal{P}(A)$ i.e. $\nu(da,dt):=\mu
	_{t}(da)dt.$ The topology of stable convergence of measures is the coarsest
	topology which makes the mapping
	\[
	q\mapsto\int_{0}^{T}\int_{A}\varphi(t,a)q(dt,da)
	\]
	continuous, for all bounded measurable functions $\varphi(t,a)$ such that for
	fixed $t$, $\varphi(t,\cdot)$ is continuous. Equipped with this topology,
	$M:=M([0,T]\times A)$ is a separable metrizable space. Moreover, it is compact
	whenever $A$ is compact. The topology of stable convergence of measures
	implies the topology of weak convergence of measures. For further details see
	\cite{El-karoui,ENJ88}.
	
	Now we present the following definitions:
	
	\begin{definition}
		$\text{Lip}(\Omega)$ is the set of random variables of the form $\xi
		:=\varphi(B_{t_{1}},B_{t_{2}},\ldots,B_{t_{n}})$ for some bounded Lipschitz
		continuous function $\phi$ on $%
		\mathbb{R}
		^{d\times n}$ and $0\leq t_{1}\leq t_{2}\leq\cdots\leq t_{n}\leq T$. The
		coordinate process $(B_{t},\,\,t\geq0)$ is called $G$-Brownian motion whenever
		$B_{1}$ is $G$-normally distributed under $\widehat{\mathbb{E}}$ $[\cdot]$ and
		for each $s,t\geq0$ and $t_{1},t_{2},\ldots,t_{n}\in\lbrack0,t]$ we have
		\[
		\widehat{\mathbb{E}}[\varphi(B_{t_{1}},\ldots,B_{t_{n}},B_{t+s}-B_{t}%
		)]=\widehat{\mathbb{E}}[\psi(B_{t_{1}},\ldots,B_{t_{n}})],
		\]
		where $\psi(x_{1},\ldots,x_{n})=\widehat{\mathbb{E}}[\varphi(x_{1}%
		,\ldots,x_{n},\sqrt{s}B_{1})]$. This property implies that the increments of
		the $G$-Brownian motion are independent and that $B_{t+s}-B_{t}$ and $B_{s}$
		are identically $N(0,s\Sigma)$-distributed.
	\end{definition}
	
	Next, we introduce the class of relaxed stochastic controls on $(\Omega
	_{T},\mathcal{H},\widehat{\mathbb{E}})$, where $\mathcal{H}$ is a vector
	lattice of real functions on $\Omega$ such that $\text{Lip}(\Omega_{T}%
	)\subset\mathcal{H}$.
	
	\begin{definition}
		A relaxed stochastic control on $(\Omega_{T},Lip(\Omega_{T}%
		),\widehat{\mathbb{E}})$ is a random measure $q(\omega,dt,da)=\mu_{t}%
		(\omega,da)dt$ such that for each subset $C\in\mathcal{B}(A)$, the process
		$(\mu_{t}(C))_{t\in\lbrack0,T]}$ is $\mathbb{F}^{\mathcal{P}}$-progressively
		measurable i.e. for every $t\in\lbrack0,T]$, the mapping $[0,t]\times
		\Omega\rightarrow\lbrack0,1]$ defined by $(s,\omega)\mapsto\mu_{s}(\omega,A)$
		is $B([0,t])\otimes\widehat{\mathcal{F}}_{t}^{\mathcal{P}}$-measurable. In
		particular, the process $(\mu_{t}(C))_{t\in\lbrack0,T]}$ is adapted to the
		\textit{universal } filtration $\mathbb{F}^{\mathcal{P}}$. We denote by
		$\mathcal{R}$ the class of relaxed stochastic controls.
	\end{definition}
	
	The set $\mathcal{U}\left[  0,T\right]  $ of strict controls constituted of
	$\mathbb{F}^{\mathcal{P}}$-adapted processes $u$ taking values in the set $A$,
	embeds into the set $\ \mathcal{R}$ of relaxed controls through the mapping:
	\[
	\Phi:\quad\mathcal{U}\left[  0,T\right]  \ni u\mapsto\Phi(u)(dt,da)=\delta
	_{u(t)}(da)dt\in\mathcal{R}.
	\]

	\begin{definition}
		Let $\mu$ a relaxed \ representation of an admissible control $u$ , for each
		$\Gamma_{0}\subset\Gamma,$ $\Gamma_{0}$ is a Borel set $(\Gamma_{0}\in
		B(\Gamma))$ and $A_{0}\subset A$, $(A_{0}\in\mathcal{B}(A)),$ we define:%
		
		\[
		N^{^{\mu}}(\left[  0,t\right]  ,A_{0},\Gamma_{0}%
		)\underset{\mbox{{\tiny we denote by}}}{:=}N^{^{\mu}}(t,A_{0},\Gamma_{0}%
		)=\int_{0}^{t}\int_{\Gamma_{0}}1_{A_{0}}(u(s)).N(ds,d\theta).
		\]

		$N^{^{\mu}}$ is the number of jumps of $\int_{0}^{t}\int_{\Gamma_{0}}\theta
		N(ds,d\theta)$ on $\left[  0,t\right]  $ with values in $\Gamma_{0}$ and
		where: $u(s)\in A_{0}$ at the jump times $s.$
		
		Since \ \ $1_{A_{0}}(u(s))=\mu_{s}(A_{0})$; then \ the compensator of the
		counting measure valued process $N^{^{\mu}}$is: \ \ \ \ \ \
		\[
		v(d\theta)\mu_{t}(da)dt=\mu_{t}\otimes v(da,d\theta)
		\]
		
	\end{definition}
	
	\begin{definition}
		A relaxed Poisson measure $N^{^{\mu}}$ is a counting measure valued process
		such that its compensator is the product measure of the relaxed control $\mu$
		with the compensator $v$ of $N$ such that : For each $\Gamma_{0}\subset
		\Gamma,$ $\Gamma_{0}$ is a Borel set $(\Gamma_{0}\in\mathcal{B}(\Gamma))$ and
		$A_{0}\subset A$, $(A_{0}\in\mathcal{B}(A)),$ the processes:%
		
		\[
		Z^{\mu}=\widetilde{N}^{^{\mu}}(t,A_{0},\Gamma_{0})=N^{^{\mu}}(t,A_{0}%
		,\Gamma_{0})-\mu(t,A_{0})\nu(\Gamma_{0})
		\]
		are $\widehat{\mathcal{F}}_{t}^{\mathcal{P}}-$martingales and orthogonal for
		disjoint $\Gamma_{0}\times A_{0}$, because according to \cite{HBG}, the
		processes $Z^{\mu}$ \ are a $\mathcal{F}_{t}^{\mathbb{P}}$- martingales for
		each $\mathbb{P}\in\mathcal{P},$ and so is an $\widehat{\mathcal{F}}%
		_{t}^{\mathcal{P}}$-martingale, also are orthogonal for disjoint $\Gamma
		_{0}\times A_{0}$.
	\end{definition}
	
	\begin{proposition}
		For any bounded measurable function $\varphi$ with real values, the process
		$Y$ given by:%
		
		\[
		\int_{0}^{t}\int_{\Gamma}\int_{A}\varphi(s,x_{s^{-}},\theta,a)N^{\mu
		}(dt,d\theta,da)-\int_{0}^{t}\int_{\Gamma}\int_{A}\varphi(s,x_{s^{-}}%
		,\theta,a)v(d\theta)\mu_{s}(da)ds
		\]

		is an $\widehat{\mathcal{F}}_{t}^{\mathcal{P}}-$martingale.
	\end{proposition}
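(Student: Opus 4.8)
The idea is to recognise $Y$ as a stochastic integral of a bounded predictable field against the compensated random measure $\widetilde{N}^{\mu}=N^{\mu}-\nu\otimes\mu\otimes dt$, and to reduce the martingale property to the one already recorded for the elementary processes $Z^{\mu}=\widetilde{N}^{\mu}(\cdot,A_{0},\Gamma_{0})$, upgrading from product indicators to an arbitrary bounded measurable $\varphi$ by a functional monotone class argument. First I would reduce to a classical (non-sublinear) statement: it suffices to prove that, for each fixed $\mathbb{P}\in\mathcal{P}$, the process $Y$ is a $\mathcal{F}_{t}^{\mathbb{P}}$-martingale under $\mathbb{P}$, because --- exactly as argued for the processes $Z^{\mu}$ in the definition of the relaxed Poisson measure, following \cite{HBG} --- a process that is a $\mathcal{F}_{t}^{\mathbb{P}}$-martingale under every $\mathbb{P}\in\mathcal{P}$ is an $\widehat{\mathcal{F}}_{t}^{\mathcal{P}}$-martingale. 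So fix $\mathbb{P}$; under $\mathbb{P}$ the measure $N$ is a genuine Poisson random measure with intensity $\nu(d\theta)\,dt$, and by the definition of $N^{\mu}$ and of its compensator the random measure $N^{\mu}$ admits the predictable compensator $\nu(d\theta)\,\mu_{s}(da)\,ds$; in particular each $Z^{\mu}_{t}=N^{\mu}(t,A_{0},\Gamma_{0})-\mu(t,A_{0})\nu(\Gamma_{0})$ is a square-integrable $\mathcal{F}_{t}^{\mathbb{P}}$-martingale.

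\textbf{Step 1 (elementary integrands).} I would first treat $\varphi(s,x,\theta,a)=\phi(s,x)\mathbf{1}_{\Gamma_{0}}(\theta)\mathbf{1}_{A_{0}}(a)$ with $\phi$ bounded and continuous and $\Gamma_{0}\in\mathcal{B}(\Gamma)$, $A_{0}\in\mathcal{B}(A)$. Since the solution $x$ of (\ref{SDE}) is right-continuous with left limits and adapted, $s\mapsto\phi(s,x_{s^{-}})$ is bounded, left-continuous and adapted, hence predictable, and for this $\varphi$ the process $Y$ is the stochastic integral $\int_{0}^{t}\phi(s,x_{s^{-}})\,dZ^{\mu}_{s}$. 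Approximating $s\mapsto\phi(s,x_{s^{-}})$ by the left-endpoint step processes $\phi^{n}_{s}=\sum_{i}\phi(t^{n}_{i},x_{(t^{n}_{i})^{-}})\mathbf{1}_{(t^{n}_{i},t^{n}_{i+1}]}(s)$ along a refining sequence of partitions, each $\int_{0}^{\cdot}\phi^{n}_{s}\,dZ^{\mu}_{s}=\sum_{i}\phi(t^{n}_{i},x_{(t^{n}_{i})^{-}})\,(Z^{\mu}_{\cdot\wedge t^{n}_{i+1}}-Z^{\mu}_{\cdot\wedge t^{n}_{i}})$ is a $\mathcal{F}_{t}^{\mathbb{P}}$-martingale by the martingale property of $Z^{\mu}$ and the tower property; moreover $\phi^{n}_{s}\to\phi(s,x_{s^{-}})$ pointwise and boundedly, so by the $L^{2}(\mathbb{P})$ isometry with respect to the finite compensator $\nu(\Gamma_{0})\mu_{s}(A_{0})\,ds\le\nu(\Gamma)\,ds$ the integrals converge in $L^{2}(\mathbb{P})$, and an $L^{2}$-limit of martingales is a martingale; hence $Y$ is a $\mathcal{F}_{t}^{\mathbb{P}}$-martingale.

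\textbf{Step 2 (monotone class and conclusion).} Let $\mathcal{H}$ denote the set of bounded measurable $\varphi$ on $[0,T]\times\mathbb{R}^{n}\times\Gamma\times A$ for which the associated $Y$ is a $\mathcal{F}_{t}^{\mathbb{P}}$-martingale. By linearity of the two integrals $\mathcal{H}$ is a vector space containing the constants; by Step 1 it contains the multiplicative class $\mathcal{M}=\{\phi(s,x)\mathbf{1}_{\Gamma_{0}}(\theta)\mathbf{1}_{A_{0}}(a)\}$, which is stable under products and generates $\mathcal{B}([0,T])\otimes\mathcal{B}(\mathbb{R}^{n})\otimes\mathcal{B}(\Gamma)\otimes\mathcal{B}(A)$; and if $0\le\varphi^{k}\uparrow\varphi$ with $\varphi$ bounded, then, because $\Gamma$ is compact, $\nu$ finite, $\mu_{s}$ a probability measure and $[0,T]$ bounded, dominated convergence applied to both the $N^{\mu}$-integral and its compensator gives $Y^{k}\to Y$ in $L^{2}(\mathbb{P})$, so $Y$ is again a martingale, i.e. $\mathcal{H}$ is closed under bounded monotone limits. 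The functional monotone class theorem then yields that $\mathcal{H}$ contains every bounded measurable $\varphi$. Since this holds for each $\mathbb{P}\in\mathcal{P}$, the process $Y$ is an $\widehat{\mathcal{F}}_{t}^{\mathcal{P}}$-martingale, as claimed.

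\textbf{Where the difficulty lies.} The statement is formally classical, so the only genuine obstacle is the sublinear framework: neither linearity of $\widehat{E}$ nor the usual stochastic-integral isometry is available under $\widehat{E}$ itself, which forces the whole argument to be run probability-measure by probability-measure over the representing family $\mathcal{P}$. What makes this disintegration legitimate is precisely the content of the two preceding definitions, namely that under every $\mathbb{P}\in\mathcal{P}$ the relaxed Poisson measure $N^{\mu}$ has $\nu(d\theta)\mu_{s}(da)\,ds$ as its compensator and the $Z^{\mu}$ are $\mathcal{F}_{t}^{\mathbb{P}}$-martingales; granting that, everything becomes the textbook martingale property of integrals against a compensated integer-valued random measure. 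A secondary, routine point is to check that $Y_{t}$ lies in the domain of $\widehat{E}$, i.e. $\widehat{E}[|Y_{t}|^{2}]<\infty$, which follows from boundedness of $\varphi$ together with the compactness of $\Gamma$ and the finiteness of $\nu$ and of $[0,T]$; one could alternatively bypass Steps 1--2 altogether and quote the general theory of integration with respect to compensated integer-valued random measures under each $\mathbb{P}$, but the monotone class route stays self-contained relative to what has already been established.
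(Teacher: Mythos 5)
Your proposal is correct and follows the same route as the paper: fix each $\mathbb{P}\in\mathcal{P}$, observe that the statement becomes the classical martingale property of integrals against a compensated counting measure, and then lift the quasi-sure/universal-filtration conclusion exactly as was done for the processes $Z^{\mu}$. The only difference is that the paper disposes of the classical step in one line by citing \cite{HBG}, whereas you reprove it self-containedly via elementary integrands and a monotone class argument; that extra work is sound but not needed given the reference.
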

	
	\begin{proof}
		From H. Ben Gherbal and B. Mezerdi, \cite{HBG} the process $Y$ is a
		$F_{t}^{\mathbb{P}}$-martingale for each $\mathbb{P}\in\mathcal{P},$ we deduce
		that $Y$ is an $\widehat{\mathcal{F}}_{t}^{\mathcal{P}}$- martingale.
	\end{proof}
	
	\begin{proposition}
		Consider a sequence of $(\mu_{s}^{n}\otimes\nu)_{n}$ converging weakly to
		$\mu_{s}\otimes\nu$ on $\Omega\times\left[  0,T\right]  \times A\times\Gamma,$
		there exists a sequence of orthogonal martingale measures $\widetilde{N}^{n}$
		defined on $\Omega\times\left[  0,T\right]  \times A\times\Gamma,$ such that
		for each bounded function $\varphi:$
		
		\bigskip%
		\[
		\int_{0}^{t}\int_{A}\int_{\Gamma}\varphi(s,X_{s^{-}}^{\mu},\theta
		,a)\widetilde{N}^{n}(ds,d\theta,da)\underset{n\rightarrow\infty
		}{\longrightarrow}\int_{0}^{t}\int_{A}\int_{\Gamma}\varphi(s,X_{s^{-}}^{\mu
		},\theta,a)\widetilde{N}^{\mu}(ds,d\theta,da)\text{\ quasi-surely}%
		\]
		
	\end{proposition}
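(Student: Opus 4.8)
The plan is to transfer the classical approximation result of Ben Gherbal and Mezerdi \cite{HBG} into the $G$-framework by running it under every $\mathbb{P}\in\mathcal{P}$ simultaneously, and then lifting the resulting $\mathbb{P}$-almost sure convergences to a quasi-sure one. For each $n$ I would take $\widetilde{N}^{n}$ to be the compensated relaxed Poisson measure attached to $\mu^{n}$, that is $\widetilde{N}^{n}:=\widetilde{N}^{\mu^{n}}=N^{\mu^{n}}-\mu^{n}\otimes\nu$, in the sense of the definitions above. By the preceding proposition, for every bounded measurable $\varphi$ the process $M^{n}_{t}:=\int_{0}^{t}\int_{A}\int_{\Gamma}\varphi(s,X^{\mu}_{s^{-}},\theta,a)\,\widetilde{N}^{n}(ds,d\theta,da)$ is an $\widehat{\mathcal{F}}_{t}^{\mathcal{P}}$-martingale, and since for disjoint $\Gamma_{0}\times A_{0}$ the counting processes $N^{\mu^{n}}(\cdot,A_{0},\Gamma_{0})$ have no common jumps, the $\widetilde{N}^{n}$ are orthogonal martingale measures. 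The decisive point, recorded for Step 3, is that this construction is pathwise in $N$ and in the given sequence $(\mu^{n})$ --- it relies on the $G$-chattering lemma of \cite{RBK} --- so the family $(\widetilde{N}^{n})_{n}$ does not depend on $\mathbb{P}$.

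Next, fixing $\mathbb{P}\in\mathcal{P}$ and working on $(\Omega,\mathcal{F}^{\mathbb{P}}_{T},\mathbb{P})$, I would argue as in \cite{HBG}. The predictable brackets $\langle M^{n}\rangle$ are uniformly bounded by $\|\varphi\|_{\infty}^{2}\,\nu(\Gamma)\,T$, so the laws of the pairs $(\mu^{n},M^{n})$ are tight on $M([0,T]\times A)\times D([0,T];\mathbb{R})$ and, along a subsequence, converge weakly; by a Skorokhod representation one may assume $\mu^{n}\to\mu$ and $M^{n}\to M^{\infty}$ $\mathbb{P}$-a.s. One then identifies $M^{\infty}$ with $M_{t}:=\int_{0}^{t}\int_{A}\int_{\Gamma}\varphi(s,X^{\mu}_{s^{-}},\theta,a)\,\widetilde{N}^{\mu}(ds,d\theta,da)$ by checking that $M^{\infty}$ and $(M^{\infty})^{2}-\langle M\rangle$ are martingales --- equivalently, that $M^{\infty}$ solves the martingale problem whose compensator is driven by $\mu\otimes\nu$, a problem with a unique solution; the hypothesis $\mu^{n}\otimes\nu\to\mu\otimes\nu$ enters precisely at this identification, applied to the test function $(s,\theta,a)\mapsto\varphi(s,X^{\mu}_{s^{-}},\theta,a)$, which is bounded and, for each fixed $s$, continuous in $a$. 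Uniqueness of the limit removes the subsequence, giving $\sup_{t\le T}|M^{n}_{t}-M_{t}|\to0$ $\mathbb{P}$-a.s.

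Since the sequence $(\widetilde{N}^{n})_{n}$ is the same for every $\mathbb{P}$, the convergence of the previous step holds $\mathbb{P}$-a.s. for each $\mathbb{P}\in\mathcal{P}$, which by definition is convergence quasi-surely with respect to the capacity associated with $\mathcal{P}$. Alternatively, the uniform bracket bound gives $\widehat{\mathbb{E}}\big[\sup_{t\le T}|M^{n}_{t}-M_{t}|^{2}\big]=\sup_{\mathbb{P}\in\mathcal{P}}\mathbb{E}^{\mathbb{P}}\big[\sup_{t\le T}|M^{n}_{t}-M_{t}|^{2}\big]\to0$, and a Borel--Cantelli argument along a subsequence upgrades this to quasi-sure convergence; finally a diagonal argument over a countable convergence-determining family of test functions extends the conclusion to every admissible $\varphi$.

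The genuinely delicate points are two. First, in the identification of Step 2 the bracket of $M^{n}$ is \emph{nonlinear} in $\mu^{n}$, so it cannot be passed to the limit by weak convergence alone; one must instead invoke uniqueness of the associated martingale problem --- equivalently, uniqueness of the relaxed Poisson measure with a prescribed product compensator, which is where the structure established in the earlier propositions is used. Second, one needs a \emph{single} approximating sequence $\widetilde{N}^{n}$ valid quasi-surely over the non-dominated family $\mathcal{P}$; this is why the pathwise, $\mathbb{P}$-independent construction of Step 1 and the $\mathbb{P}$-uniform estimates of Step 2 are indispensable, whereas the purely measure-theoretic weak-convergence step is routine.
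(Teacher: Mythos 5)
Your overall architecture is exactly the paper's: take $\widetilde{N}^{n}=\widetilde{N}^{\mu^{n}}$ pathwise, obtain the convergence $\mathbb{P}$-almost surely for each fixed $\mathbb{P}\in\mathcal{P}$ from the classical result of Ben Gherbal and Mezerdi, and observe that a set which is $\mathbb{P}$-null for every $\mathbb{P}\in\mathcal{P}$ is polar, so the convergence is quasi-sure. The paper's proof is nothing more than this citation plus the polar-set remark, and your Steps 1 and 3 reproduce it faithfully; you are also right that the whole scheme hinges on the sequence $(\widetilde{N}^{n})$ and the integrals being defined pathwise, independently of $\mathbb{P}$.

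The gap is in your Step 2, where you attempt to supply the classical argument yourself. Tightness of the laws of $(\mu^{n},M^{n})$, a Skorokhod representation, and identification of the limit through a well-posed martingale problem yield convergence \emph{in distribution} of $M^{n}$ to $M$; the almost sure convergence produced by Skorokhod's theorem lives on an auxiliary probability space and concerns copies of $(\mu^{n},M^{n})$, not the original random variables on $\Omega$. ``Uniqueness of the limit removes the subsequence'' therefore gives weak convergence of the whole sequence, not $\sup_{t\le T}|M^{n}_{t}-M_{t}|\to 0$ $\mathbb{P}$-a.s.\ on $\Omega$ --- and without genuine $\mathbb{P}$-a.s.\ convergence of the \emph{same} random variables on the \emph{same} space for every $\mathbb{P}$, the polar-set lifting of Step 3 has nothing to act on. The way to close this (and what the cited classical proof actually does) is to work directly on $\Omega$: the chattering lemma already gives $\mu^{n}_{t}(da)dt\to\mu_{t}(da)dt$ stably, almost surely, so one estimates $\mathbb{E}^{\mathbb{P}}\bigl[\sup_{t\le T}|M^{n}_{t}-M_{t}|^{2}\bigr]$ via the isometry for compensated Poisson integrals (the compensator of $\widetilde{N}^{n}$ being $\mu^{n}_{t}(da)\nu(d\theta)dt$) together with the stable convergence of $\mu^{n}\otimes\nu$ applied to the bounded test function $(s,\theta,a)\mapsto\varphi(s,X^{\mu}_{s^{-}},\theta,a)^{2}$, and then upgrades to a.s.\ convergence. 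Your ``alternative'' route through $\widehat{\mathbb{E}}\bigl[\sup_{t}|M^{n}_{t}-M_{t}|^{2}\bigr]\to 0$ has the right flavour but also needs justification: interchanging $\lim_{n}$ with $\sup_{\mathbb{P}\in\mathcal{P}}$ is precisely the delicate step the paper handles elsewhere (Lemma \ref{xhxetoile}) by weak compactness of $\mathcal{P}$, and Borel--Cantelli then only delivers quasi-sure convergence along a subsequence.
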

	
	\begin{proof}
		Given a fixed probability measure
		\[
		\mathbb{P}\in\mathcal{P},\underset{{\tiny \cite{HBG}}}{\implies}\int_{0}%
		^{t}\int_{A}\int_{\Gamma}\varphi(s,x_{s^{-}}^{\mu},\theta,a)\widetilde{N}%
		^{n}(ds,d\theta,da)\rightarrow\int_{0}^{t}\int_{A}\int_{\Gamma}\varphi
		(s,x_{s^{-}}^{\mu},\theta,a)\widetilde{N}^{\mu}(ds,d\theta,da),\text{
		}\mathbb{P}\mbox{-surely}
		\]
		this means that we have the convergence outside a polar set, which means that
		we have quasi surely convergence.
	\end{proof}
	
	\subsection{\bigskip G-relaxed control version of the G-SDE with controlled
		jumps}
	
	Now we present our relaxed controlled system:
	
	The $G$-SDE\ with controlled jumps in terms of relaxed Poisson measure is
	given by:%
	
	\begin{equation}
		\left\{
		\begin{array}
			[c]{l}%
			dx^{\mu}(t)=\int_{A}b(t,x_{t}^{\mu},a)\mu_{t}(da)dt+\sigma(t,x_{t}^{\mu
			})dB_{t}+\int_{A}\gamma(t,x_{t}^{\mu},a)\mu_{t}(da)d\langle B\rangle_{t}+\\
			\int_{A}\int_{\Gamma}f(t,x_{t^{-}}^{\mu},\theta,a)\widetilde{N}^{\mu
			}(dt,d\theta,da)\\
			x_{0}^{\mu}=0
		\end{array}
		\right.  \label{relaxedSDE}%
	\end{equation}

	The cost functional is given by:
	\[
	J(\mu)=\widehat{\mathbb{E}}\left[  \int_{0}^{T}\int_{A}h(t,x_{t}^{\mu}%
	,a)\mu_{t}(da)\,dt+g(x_{T}^{\mu})\right]  .
	\]

	\section{Approximation of trajectories and stability results}
	
	The next lemma, which called G-chattering lemma gives the approximation of a
	relaxed control by a sequence of strict controls order for the relaxed control
	problem. This result is considered essential in showing that the relaxed
	control problem is a truly an extension of the strict one. we refer to
	\cite{RBK} to more detail of this subsection.
	
	\begin{lemma}
		\label{chattlemma}Let $(U,d)$ be a separable metric space and assume that $U$
		is a compact set. Let $(\mu_{t})_{t}$ be an $\mathbb{F}^{\mathbb{P}}$
		-progressively measurable process \ with values in $P(U)$. Then there exists a
		sequence $(u_{t}^{n})_{n\geq0}$ of $\mathbb{F}^{\mathbb{P}}$-progressively
		measurable processes with values in $U$ such that the sequence of random
		measures $\delta_{u_{t}^{n}}(da)dt$ converges in the sense of stable
		convergence (thus, weakly) to $\mu_{t}(da)dt$ quasi-surely : \
		\[
		\mu_{t}^{n}(da)dt=\delta_{u_{t}^{n}}(da)dt\longrightarrow\mu_{t}(da)dt\text{
			\ \ \ quasi-surely}%
		\]
		
	\end{lemma}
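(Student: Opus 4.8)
The plan is to run Fleming's classical chattering construction \emph{pathwise}, but carried out measurably in $\omega$ so that the approximating controls are $\mathbb{F}^{\mathcal P}$-progressively measurable, and then to observe that the resulting convergence is a purely deterministic property of each trajectory $s\mapsto\mu_s(\omega)$. Consequently it holds for \emph{every} $\omega\in\Omega$, in particular $\mathbb{P}$-almost surely for every $\mathbb{P}\in\mathcal P$, i.e. quasi-surely; this universality is exactly what circumvents the fact that the measures of $\mathcal P$ may be mutually singular, so that one cannot simply paste together the $\mathbb{P}$-a.s.\ statements coming from the classical lemma.

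First I would fix the data of the $n$-th approximation: the uniform grid $t_i^n=iT/n$, $0\le i\le n$; a finite Borel partition $\{A_1^n,\dots,A_{k_n}^n\}$ of the compact metric space $U$ with $\operatorname{diam}(A_j^n)\le 1/n$; and representatives $a_j^n\in A_j^n$. On $I_i^n:=[t_i^n,t_{i+1}^n)$ define the averaged law $\bar\mu_i^n(\omega)(\cdot):=\tfrac{n}{T}\int_{I_i^n}\mu_s(\omega)(\cdot)\,ds$, which is a probability measure on $U$ for every $\omega$ (countable additivity by monotone convergence, total mass $1$), and, by progressive measurability of $\mu$ together with Fubini, $\omega\mapsto\bar\mu_i^n(\omega)(B)$ is $\widehat{\mathcal F}^{\mathcal P}_{t_{i+1}^n}$-measurable for each $B\in\mathcal B(U)$. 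Then I split the \emph{next} interval $I_{i+1}^n$ into consecutive subintervals of lengths $\tfrac{T}{n}\bar\mu_i^n(\omega)(A_j^n)$, $j=1,\dots,k_n$, set $u^n_t(\omega):=a_j^n$ on the $j$-th of them, and put $u^n_t\equiv a_0$ on $I_0^n$ for a fixed $a_0\in U$. The endpoints of these subintervals are $\widehat{\mathcal F}^{\mathcal P}_{t_{i+1}^n}$-measurable functions of $\omega$ and $t_{i+1}^n$ is the left end of the interval on which they are used, so $u^n$ is $\mathbb{F}^{\mathcal P}$-adapted; being assembled from finitely many such pieces on each $[0,t]$, it is $\mathbb{F}^{\mathcal P}$-progressively measurable with values in $U$.

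Next I would establish the stable convergence. By a standard reduction it suffices to test against $\varphi(t,a)=\mathbf 1_{[0,r]}(t)\,g(a)$ with $r\in[0,T]$ and $g$ in a fixed countable dense subset of $C(U)$ (see \cite{El-karoui}). Fix $\omega$ and $g$. On $I_{i+1}^n$ one has $\int_{I_{i+1}^n}g(u^n_t)\,dt=\sum_j g(a_j^n)\,\tfrac{T}{n}\bar\mu_i^n(A_j^n)$, which differs from $\tfrac{T}{n}\int_U g\,d\bar\mu_i^n=\int_{I_i^n}\!\int_U g\,d\mu_s\,ds$ by at most $\tfrac{T}{n}\,\varpi_g(1/n)$, where $\varpi_g$ is the modulus of continuity of $g$. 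Summing over the intervals contained in $[0,r]$, the total oscillation error is $\le T\,\varpi_g(1/n)\to 0$ by uniform continuity of $g$ on the compact $U$; the resulting main term equals $\int_0^{r_n}\!\int_U g\,d\mu_s\,ds$ for some $r_n$ with $|r_n-r|\le 2T/n$, and this converges to $\int_0^r\!\int_U g\,d\mu_s\,ds$ by absolute continuity of the Lebesgue integral; the leftover boundary contribution of $u^n$ on $[0,r]$ is bounded by $\tfrac{2T}{n}\sup|g|$. Hence $\int_0^r g(u^n_t)\,dt\to\int_0^r\!\int_U g\,d\mu_t\,dt$ for every $r$, every $g$ in the countable family and — crucially — every $\omega$, because nothing in this estimate used more than the measurability of $s\mapsto\mu_s(\omega)$. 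This is precisely $\delta_{u^n_t}(da)\,dt\to\mu_t(da)\,dt$ in the stable sense, and since it holds for all $\omega$ it holds quasi-surely.

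The delicate point is adaptedness: the naive variant that uses $\bar\mu_i^n$ on $I_i^n$ itself produces a process that on $I_i^n$ is only $\widehat{\mathcal F}^{\mathcal P}_{t_{i+1}^n}$-measurable, hence not adapted, which is why the one-step time shift is essential; checking that this shift, together with the two coarse discretizations (in time and in $U$), washes out in the limit is the technical heart of the argument, handled above by the $O(T/n)+O(\varpi_g(1/n))$ bookkeeping. The remaining ingredients — that $\bar\mu_i^n(\omega)$ is a genuine probability law, the joint $\mathcal B([0,T])\otimes\widehat{\mathcal F}^{\mathcal P}$-measurability of $(t,\omega)\mapsto u^n_t(\omega)$, and the reduction of stable convergence to a countable determining family of test functions — are routine and are carried out as in \cite{RBK,El-karoui}.
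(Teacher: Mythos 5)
Your proof is correct and is essentially the argument behind the cited result: the paper itself gives no proof of Lemma \ref{chattlemma} beyond ``see \cite{RC18}'', and what is done there is exactly your strategy --- run the classical Fleming/El Karoui chattering construction pathwise and adaptedly, and observe that the stable convergence is a deterministic property of each trajectory $s\mapsto\mu_s(\omega)$, hence holds for every $\omega$ and in particular quasi-surely, which is the only point where the G-framework (mutually singular measures in $\mathcal{P}$) requires care. Your bookkeeping of the one-interval time shift needed for adaptedness and of the two discretization errors is sound, so the proposal fills in, correctly and along the same lines, the details the paper delegates to \cite{RC18}.
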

	
	\begin{proof}
		see \cite{RC18}
	\end{proof}
	
	\begin{lemma}
		Under our assumption (A), for every $\ \mathbb{P}\in\mathcal{P}$, it holds that:
		
		\begin{enumerate}
			\item
			\begin{equation}
				\lim_{n\rightarrow\infty}\mathbb{E}^{\mathbb{P}}\left[  \sup_{0\leq t\leq
					T}\left\vert x^{n}(t)-x^{\mu}(t)\right\vert ^{2}\right]  =0, \label{diff}%
			\end{equation}

			\textit{and,}
			\begin{equation}
				\lim_{n\rightarrow\infty}J^{\mathbb{P}}(u^{n})=J^{\mathbb{P}}(\mu).
				\label{equa1}%
			\end{equation}

			\item \textit{Moreover},
			\begin{equation}
				\inf_{u\in\mathcal{U}[0,T]}J^{\mathbb{P}}(u)=\inf_{\mu\in\mathcal{R}%
				}J^{\mathbb{P}}(\mu), \label{equa2}%
			\end{equation}

			and there exists a relaxed control $\hat{\mu}_{\mathbb{P}}\in\mathcal{R}$ such
			that \textit{\ } $J^{\mathbb{P}}(\hat{\mu}_{\mathbb{P}})=\inf_{\mu
				\in\mathcal{R}}J^{\mathbb{P}}(\mu).$
		\end{enumerate}
		
		\begin{proof}
			See \cite{RBK}
		\end{proof}
	\end{lemma}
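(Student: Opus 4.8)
The plan is to fix $\mathbb{P}\in\mathcal{P}$ once and for all and to argue entirely on the classical filtered space $(\Omega,\mathcal{F}^{\mathbb{P}},(\mathcal{F}_t^{\mathbb{P}}),\mathbb{P})$, on which $B$ is a continuous square-integrable martingale with $d\langle B\rangle_t=a_t\,dt$ for a bounded adapted density $a_t$, and $N$ is a Poisson random measure with finite compensator $\nu(d\theta)\,dt$ supported on the compact set $\Gamma$. Under assumption (A), \cite{Pa12} gives unique strong solutions $x^n$ of \eqref{SDE} with $u=u^n$ and $x^\mu$ of \eqref{relaxedSDE}, with second (indeed all) moments bounded uniformly in $n$. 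First I would apply the $G$-chattering lemma (Lemma \ref{chattlemma}) to obtain strict controls $(u_t^n)$ with $\delta_{u_t^n}(da)\,dt\to\mu_t(da)\,dt$ stably, quasi-surely (hence $\mathbb{P}$-a.s.), and the martingale-measure stability proposition to obtain orthogonal martingale measures $\widetilde N^n$ with $\int_0^t\!\int_A\!\int_\Gamma\varphi\,\widetilde N^n(ds,d\theta,da)\to\int_0^t\!\int_A\!\int_\Gamma\varphi\,\widetilde N^\mu(ds,d\theta,da)$ quasi-surely for every bounded $\varphi$; by construction $\widetilde N^n$ is the compensated Poisson measure attached to $u^n$, so that the jump input of $x^n$ is $\int_\Gamma f(t,x_{t^-}^n,\theta,u_t^n)\,\widetilde N(dt,d\theta)$.

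Next I would control $e_n(t):=\mathbb{E}^{\mathbb{P}}\big[\sup_{0\le s\le t}|x^n(s)-x^\mu(s)|^2\big]$. Writing the difference $x^n-x^\mu$ as the sum of a drift term, a $d\langle B\rangle_t$ term, a $dB_t$ term and a compensated-jump term, I would apply the Burkholder-Davis-Gundy inequality to the martingale parts and Cauchy-Schwarz to the finite-variation parts. Each coefficient difference splits as (i) a frozen-control part, e.g.\ $b(s,x_s^n,u_s^n)-b(s,x_s^\mu,u_s^n)$ and similarly for $\sigma,\gamma,f$, which by the Lipschitz assumption (A1) is bounded by $C|x_s^n-x_s^\mu|$ and feeds a term $C\int_0^t e_n(s)\,ds$; and (ii) a relaxation-error part, $b(s,x_s^\mu,u_s^n)-\int_A b(s,x_s^\mu,a)\mu_s(da)$, the analogous $\sigma,\gamma$ terms, and the jump discrepancy $\int_\Gamma f(s,x_{s^-}^\mu,\theta,u_s^n)\,\widetilde N(ds,d\theta)-\int_A\!\int_\Gamma f(s,x_{s^-}^\mu,\theta,a)\,\widetilde N^\mu(ds,d\theta,da)$, whose $L^2(\mathbb{P})$-norms tend to $0$ by the stable convergence $\delta_{u^n}\to\mu$, the continuity in the control variable (A2), the boundedness (A1), dominated convergence, the martingale-measure stability proposition, and the It\^o isometry under $\mathbb{P}$ for the jump part. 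Calling $\varepsilon_n\to0$ the supremum over $t\in[0,T]$ of the resulting bound on the type-(ii) contributions, Gronwall's lemma yields $e_n(t)\le\varepsilon_n\,e^{Ct}$, which is \eqref{diff}. Then \eqref{equa1} follows by estimating $|J^{\mathbb{P}}(u^n)-J^{\mathbb{P}}(\mu)|$ by $\mathbb{E}^{\mathbb{P}}|g(x_T^n)-g(x_T^\mu)|+\mathbb{E}^{\mathbb{P}}\int_0^T|h(t,x_t^n,u_t^n)-\int_A h(t,x_t^\mu,a)\mu_t(da)|\,dt$ and invoking the Lipschitz property of $g$, \eqref{diff}, (A2), (A4) and dominated convergence exactly as for the state.

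For the second assertion, the embedding $\Phi:\mathcal{U}[0,T]\hookrightarrow\mathcal{R}$, $\Phi(u)=\delta_{u(t)}(da)\,dt$, gives $\inf_{\mu\in\mathcal{R}}J^{\mathbb{P}}(\mu)\le\inf_{u\in\mathcal{U}[0,T]}J^{\mathbb{P}}(u)$, while for any $\mu\in\mathcal{R}$ the sequence produced in \eqref{equa1} satisfies $J^{\mathbb{P}}(\mu)=\lim_n J^{\mathbb{P}}(u^n)\ge\inf_{u\in\mathcal{U}[0,T]}J^{\mathbb{P}}(u)$, hence $\inf_{\mu\in\mathcal{R}}J^{\mathbb{P}}(\mu)\ge\inf_{u\in\mathcal{U}[0,T]}J^{\mathbb{P}}(u)$; this proves \eqref{equa2}. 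For existence, take a minimizing sequence $(\mu^k)\subset\mathcal{R}$; since $A$ is compact, $\mathcal{R}$ is compact and metrizable for the stable-convergence topology, so a subsequence converges to some $\hat\mu_{\mathbb{P}}\in\mathcal{R}$. A stability argument structurally identical to the one above, with $\mu^k$ in place of $\delta_{u^n}$ and $\widetilde N^{\mu^k}$ in place of $\widetilde N^n$, shows $x^{\mu^k}\to x^{\hat\mu_{\mathbb{P}}}$ in $L^2(\mathbb{P};C([0,T]))$ and then $J^{\mathbb{P}}(\mu^k)\to J^{\mathbb{P}}(\hat\mu_{\mathbb{P}})$, so $J^{\mathbb{P}}(\hat\mu_{\mathbb{P}})=\inf_{\mu\in\mathcal{R}}J^{\mathbb{P}}(\mu)$.

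The step I expect to be the main obstacle is upgrading the convergence of the relaxation-error terms --- above all the jump discrepancy involving $\widetilde N$ versus $\widetilde N^\mu$ --- from the merely quasi-sure (equivalently $\mathbb{P}$-a.s.) convergence supplied by Lemma \ref{chattlemma} and the martingale-measure stability proposition to genuine $L^2(\mathbb{P})$ convergence; this needs a uniform-integrability argument resting on the boundedness of $f$, the finiteness $\nu(\Gamma)<\infty$ on the compact jump set, and the orthogonal $\mathbb{F}^{\mathbb{P}}$-martingale structure of the processes $Z^\mu=\widetilde N^\mu(t,A_0,\Gamma_0)$. A minor point requiring care is that all approximating objects are defined quasi-surely whereas the estimates are carried out under the fixed $\mathbb{P}$; this is harmless because quasi-sure properties hold $\mathbb{P}$-a.s.\ for every $\mathbb{P}\in\mathcal{P}$.
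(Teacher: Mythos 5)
The paper does not actually prove this lemma: its ``proof'' is the single citation to \cite{RBK}. Your sketch reconstructs what is, in all essentials, the argument of that reference and of its classical antecedents (\cite{HBG}, \cite{El-karoui}): fix $\mathbb{P}$, reduce the $G$-SDE to a classical SDE with jumps under $\mathbb{P}$, split each coefficient increment into a Lipschitz part absorbed by Gronwall and a relaxation-error part killed by stable convergence, and then deduce convergence of the costs. Part 1 and the identity \eqref{equa2} are correctly argued this way, and you rightly isolate the genuinely delicate step, namely upgrading the quasi-sure convergence of the martingale-measure integrals to $L^{2}(\mathbb{P})$; the uniform integrability you invoke does hold, since boundedness of $f$ together with $\nu(\Gamma)<\infty$ gives, via Burkholder--Davis--Gundy, a bound on fourth moments of the compensated jump integrals uniform in $n$. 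Two small points of care in part 1: stable convergence only gives convergence of the \emph{time-integrated} relaxation errors, not pointwise in $s$ (this is harmless because the drift and $d\langle B\rangle$ terms enter the SDE only through such integrals, and equicontinuity plus pointwise convergence upgrades to uniform convergence in $t$), and the test functions $\varphi(s,a)=\gamma(s,x_s^{\mu}(\omega),a)a_s(\omega)$ are random, which is admissible because the chattering lemma gives stable convergence for each fixed $\omega$ outside a polar set.

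The one genuine gap is in the existence of $\hat\mu_{\mathbb{P}}$. From a minimizing sequence $(\mu^{k})$ of \emph{random} measures you cannot extract a single subsequence along which $\mu^{k}(\omega)$ converges stably for $\mathbb{P}$-almost every $\omega$ merely because the deterministic space $M([0,T]\times A)$ is compact: the convergent subsequence produced by compactness depends on $\omega$, and no diagonalization is available. The standard repair---and the one used in \cite{RBK} and \cite{HBG}---is to pass to the laws: tightness of the distributions of $(\mu^{k},x^{\mu^{k}},B,N)$ (automatic since $M([0,T]\times A)$ is compact and the state moments are uniformly bounded), extraction of a weakly convergent subsequence, Skorokhod representation, and identification of the limit as an adapted relaxed control whose state solves \eqref{relaxedSDE} driven by a relaxed Poisson measure with compensator $\mu_t\otimes\nu$. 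Without this step your argument produces only an $\omega$-dependent candidate limit. A cosmetic point: the trajectories live in the Skorokhod space $D([0,T])$, not $C([0,T])$, since the jump term persists in the limit.
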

	
	The next theorem gives the stability of the stochastic differential equations
	with respect to the control variable, and that the two problems has the same
	infimum of the expected costs.
	
	\begin{theorem}
		\label{stability}Under our assumption (A) we have:
		
		\begin{enumerate}
			\item Let $\mu$ be a relaxed control \ and let $x^{\mu}$ the corresponding
			trajectory. Then there exists a sequence $(u^{n})$ of strict controls such that:%
			
			\[
			\lim_{n\longrightarrow\infty}\widehat{\mathbb{E}}\left[  \sup_{0\leq t\leq
				T}\left\vert x_{t}^{n}-x_{t}^{\mu}\right\vert ^{2}\right]  =0,
			\]

			where $x_{t}^{n}$ denotes the trajectory associated to $u^{n}.$
			
			\item Let $J(u^{n})$ and $J(\mu)$ be the cost functional corresponding
			respectively to $u^{n}$ and $\mu$ (where $dt\delta_{u^{n}}(t)(da)$ converges
			weakly to $dt\mu_{t}(da)$ quasi-surely). Then, there exists a subsequence
			$\left(  u^{n_{k}}\right)  $ of $\left(  u^{n}\right)  $ such that%
			\[
			\lim_{k\longrightarrow\infty}J((u^{n_{k}})=J(\mu).
			\]
			
		\end{enumerate}
	\end{theorem}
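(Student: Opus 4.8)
The plan is to feed the given relaxed control into the $G$-chattering lemma, to upgrade the $\mathbb{P}$-by-$\mathbb{P}$ estimates of the preceding lemma to a uniform bound in $\widehat{\mathbb{E}}=\sup_{\mathbb{P}\in\mathcal{P}}\mathbb{E}^{\mathbb{P}}$, and finally to pass to a subsequence in order to carry the cost functionals to the limit.

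\emph{Item 1.} Apply Lemma~\ref{chattlemma} to $\mu$: there are $\mathbb{F}^{\mathcal{P}}$-progressively measurable strict controls $u^{n}$ with $\delta_{u^{n}_{t}}(da)\,dt\to\mu_{t}(da)\,dt$ quasi-surely, stably (hence weakly). Let $x^{n}$ solve \eqref{SDE} with control $u^{n}$ and let $x^{\mu}$ solve \eqref{relaxedSDE}. Subtract the two equations and, in each coefficient, insert the term with the state frozen at $x^{\mu}$, e.g.
\[
b(s,x^{n}_{s},u^{n}_{s})-\int_{A}b(s,x^{\mu}_{s},a)\mu_{s}(da)=\bigl[b(s,x^{n}_{s},u^{n}_{s})-b(s,x^{\mu}_{s},u^{n}_{s})\bigr]+\int_{A}b(s,x^{\mu}_{s},a)\bigl(\delta_{u^{n}_{s}}-\mu_{s}\bigr)(da),
\]
and analogously for $\gamma$ and $f$ ($\sigma$ being uncontrolled contributes only a Lipschitz term). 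The constants in the $G$-Burkholder--Davis--Gundy inequality, in the bound $d\langle B\rangle_{s}\le\overline{\sigma}^{2}\,ds$, in the $G$-Itô isometry for the relaxed Poisson martingale measure (see the Propositions above on the relaxed Poisson measure), and in Gronwall's lemma depend only on $T$, on the constants of (A1), and on $\nu(\Gamma)$; hence the estimate underlying the preceding lemma is in fact uniform in $\mathbb{P}$, and taking the supremum over $\mathcal{P}$ yields
\[
\widehat{\mathbb{E}}\Bigl[\sup_{0\le t\le T}|x^{n}_{t}-x^{\mu}_{t}|^{2}\Bigr]\le C\,R_{n},\qquad R_{n}:=\widehat{\mathbb{E}}\!\left[\int_{0}^{T}\Bigl|\int_{A}b(s,x^{\mu}_{s},a)(\delta_{u^{n}_{s}}-\mu_{s})(da)\Bigr|^{2}ds\right]+(\gamma\text{-term})+(f\text{-term}).
\]
It remains to show $R_{n}\to0$. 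For quasi-every $\omega$ the path $s\mapsto x^{\mu}_{s}(\omega)$ is fixed and càdlàg, so $s\mapsto b(s,x^{\mu}_{s}(\omega),\cdot)$ is a bounded Borel function of $s$, continuous in the control variable; thus the stable convergence $\delta_{u^{n}}\to\mu$ forces the corresponding time integral to $0$ quasi-surely, and likewise for the $\gamma$-residual, while for the $f$-residual I invoke the Proposition above on the quasi-sure convergence of the martingale measures $\widetilde{N}^{n}\to\widetilde{N}^{\mu}$. Since $b,\gamma,f$ are bounded, these residual integrands are uniformly bounded, so the $G$-dominated convergence theorem gives $R_{n}\to0$, hence item~1.

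\emph{Item 2.} By item~1, $\sup_{t}|x^{n}_{t}-x^{\mu}_{t}|\to0$ in $\widehat{\mathbb{E}}$-mean, hence in capacity (sublinear Markov inequality $c(\sup_{t}|x^{n}_{t}-x^{\mu}_{t}|>\varepsilon)\le\varepsilon^{-1}\widehat{\mathbb{E}}[\sup_{t}|x^{n}_{t}-x^{\mu}_{t}|]$), so along a subsequence $(u^{n_{k}})$ one has $\sup_{0\le t\le T}|x^{n_{k}}_{t}-x^{\mu}_{t}|\to0$ quasi-surely. Along this subsequence continuity of $g$ gives $g(x^{n_{k}}_{T})\to g(x^{\mu}_{T})$ q.s., and for the running cost split
\[
\int_{0}^{T}h(t,x^{n_{k}}_{t},u^{n_{k}}_{t})\,dt=\int_{0}^{T}\!\!\int_{A}\bigl[h(t,x^{n_{k}}_{t},a)-h(t,x^{\mu}_{t},a)\bigr]\delta_{u^{n_{k}}_{t}}(da)\,dt+\int_{0}^{T}\!\!\int_{A}h(t,x^{\mu}_{t},a)\,\delta_{u^{n_{k}}_{t}}(da)\,dt,
\]
where the first term is $\le L\,T\sup_{t}|x^{n_{k}}_{t}-x^{\mu}_{t}|\to0$ q.s. by the Lipschitz property in (A4), and the second converges q.s. to $\int_{0}^{T}\int_{A}h(t,x^{\mu}_{t},a)\mu_{t}(da)\,dt$ by stable convergence (again $s\mapsto h(s,x^{\mu}_{s},\cdot)$ being admissible for q.e. $\omega$). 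Hence $g(x^{n_{k}}_{T})+\int_{0}^{T}h(t,x^{n_{k}}_{t},u^{n_{k}}_{t})\,dt\to g(x^{\mu}_{T})+\int_{0}^{T}\int_{A}h(t,x^{\mu}_{t},a)\mu_{t}(da)\,dt$ quasi-surely; since $g$ and $h$ are bounded the left-hand side is uniformly bounded, so the $G$-dominated convergence theorem yields $J(u^{n_{k}})\to J(\mu)$.

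The step I expect to be the main obstacle is not the Gronwall/BDG bookkeeping but the passage from per-$\mathbb{P}$ statements to the sublinear expectation: one must check that the constants inherited from the preceding lemma are genuinely $\mathbb{P}$-independent, and then that the residuals $R_{n}$ tend to $0$ in $\widehat{\mathbb{E}}$, which rests on the \emph{quasi-sure} (not merely $\mathbb{P}$-a.s.) stable convergence produced by the $G$-chattering lemma together with the $G$-dominated convergence theorem and the quasi-sure martingale-measure convergence of the earlier proposition. A related technical point is the verification that $s\mapsto(\text{coefficient})(s,x^{\mu}_{s},\cdot)$ is, for quasi-every path, an admissible test function for stable convergence, which is precisely what makes the weak-convergence residuals vanish.
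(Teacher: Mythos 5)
The paper itself does not prove this theorem; it simply cites \cite{RBK}. Your reconstruction (chattering lemma, coefficient decomposition into a Lipschitz part plus a weak-convergence residual, $\mathbb{P}$-uniform BDG/Gronwall constants, then a subsequence argument for the cost) is the natural route and is almost certainly the architecture of the proof in \cite{RBK}, so on the level of strategy there is nothing to object to.

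There is, however, one genuine gap, and it sits exactly where you predicted the main obstacle would be: the passage from quasi-sure convergence of bounded random variables to convergence under $\widehat{\mathbb{E}}=\sup_{\mathbb{P}\in\mathcal{P}}\mathbb{E}^{\mathbb{P}}$, which you settle twice (for $R_n\to 0$ in Item 1 and for $J(u^{n_k})\to J(\mu)$ in Item 2) by a one-line appeal to ``the $G$-dominated convergence theorem.'' Under a sublinear expectation this implication is false for general bounded measurable random variables: quasi-sure convergence plus a uniform bound gives $\mathbb{E}^{\mathbb{P}}[X_n]\to 0$ for each fixed $\mathbb{P}$, but the supremum over $\mathcal{P}$ may be attained at a different $\mathbb{P}_n$ for each $n$ and need not converge. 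The available dominated-convergence statements in the $G$-framework (Denis--Hu--Peng) require the random variables to lie in $L^1_G$, i.e.\ to be quasi-continuous, and/or the convergence to hold in capacity rather than merely quasi-surely; neither hypothesis is verified in your argument for the residuals $R_n$ (which involve stochastic integrals against the martingale measures $\widetilde{N}^n$) or for the cost functionals. The paper is visibly aware of this difficulty: in the proof of Lemma~\ref{xhxetoile} the analogous upgrade from per-$\mathbb{P}$ limits to a $\widehat{E}$ limit is carried out by a separate contradiction argument exploiting the weak compactness of $\mathcal{P}$, not by dominated convergence. To close your proof you should either supply such a weak-compactness/contradiction argument at the two places where you invoke dominated convergence, or verify quasi-continuity (membership in the appropriate $L^p_G$ spaces, which assumptions (A3)--(A4) are presumably designed to guarantee) together with convergence in capacity before citing the $G$-dominated convergence theorem.
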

	
	\begin{proof}
		See \cite{RBK}.
	\end{proof}
	
	\section{Maximum principle for relaxed control problems}
	
	Our main goal in this section is to establish optimality necessary conditions
	for relaxed control problems, where the system is described by a G-SDE driven
	by a relaxed Poisson measure. The proof is based on the G-chattering lemma, we
	derive necessary conditions of near optimality satisfied by a sequence of
	strict controls. By using stability properties of the state equations and
	adjoint processes, we obtain the maximum principle for our relaxed problem.
	
	\subsection{The maximum principle for strict control}
	
	Under the above hypothesis, (\ref{SDE}) has a unique strong solution and the
	cost functional (\ref{cost}) is well defined from $U$ into $%
	\mathbb{R}
	,$ for more detail see \cite{RBK}. The purpose of this subsection is to derive
	optimality necessary conditions, satisfied by an optimal strict control. The
	proof is based on the strong perturbation of the optimal control $u^{\ast},$
	which defined by :
	\[
	u^{h}=\left\{
	\begin{array}
		[c]{l}%
		\nu\text{ \ \ \ if \ }t\in\left[  t_{0};t_{0}+h\right] \\
		u^{\ast}\text{ \ \ \ \ \ \ otherwise }%
	\end{array}
	\right.
	\]

	where $0\leq t_{0}<T$ is fixed, $h$ is sufficiently small, and $\nu$ is an
	arbitrary $A-$valued $\mathcal{F}_{t_{0}}-$measurable random such that
	$E\left\vert \nu\right\vert ^{2}<\infty.$ Let $x_{t}^{h}$ denotes the
	trajectory associated with $u^{h},$ then%
	
	\[
	\left\{
	\begin{array}
		[c]{l}%
		x_{t}^{h}=x_{t}^{\ast}\text{ \ \ \ };\text{ }t\leq t_{0}\\
		dx_{t}^{h}=b(t,x_{t}^{h},\nu)dt+\sigma(t,x_{t}^{h})dB_{t}{\small +\gamma
			(t,x_{t}^{h},\nu)d\langle B\rangle}_{t}+%
		{\displaystyle\int\limits_{\Gamma}}
		{\small f(t,x}_{t^{-}}^{h}{\small ,\theta,\nu)}\widetilde{{\small N}%
		}{\small (dt,d\theta)}\text{ \ \ };t_{0}<t<t_{0}+h\\
		dx_{t}^{h}=b(t,x_{t}^{h},u^{\ast})dt+\sigma(t,x_{t}^{h})dB_{t}{\small +\gamma
			(t,x_{t}^{h},u^{\ast})d\langle B\rangle}_{t}+%
		{\displaystyle\int\limits_{\Gamma}}
		{\small f(t,x}_{t^{-}}^{h}{\small ,\theta,u^{\ast})}\widetilde{{\small N}%
		}{\small (dt,d\theta)}\text{ \ \ };t_{0}+h<t<T
	\end{array}
	\right.
	\]

	We first have
	
	\begin{lemma}
		\label{xhxetoile}Under assumptions (H$_{1}$)-(H$_{2}$), we have
	\end{lemma}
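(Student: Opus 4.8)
The plan is to prove the a priori spike estimate $\widehat{\mathbb{E}}\big[\sup_{0\le t\le T}|x_t^h-x_t^\ast|^2\big]\le Ch$, with $C$ independent of $h$ and $t_0$, by a two-stage Gronwall argument following the two relevant time windows $[t_0,t_0+h]$ and $[t_0+h,T]$ (on $[0,t_0]$ the trajectories coincide by the definition of $u^h$, so there is nothing to show). Throughout I would rely on: the subadditivity of $\widehat{\mathbb{E}}$, which lets one bound the expectation of a sum of integrals by a sum of expectations and bring the $\sup$ inside; the boundedness and Lipschitz-in-$x$ continuity of $b,\sigma,\gamma,f$ from assumption (A); the fact that $d\langle B\rangle_s$ has a bounded density, so $\int_r^t|\cdot|\,d\langle B\rangle_s\le\bar\sigma^2\int_r^t|\cdot|\,ds$; the finiteness of $\nu(\Gamma)$ (the jumps are confined to the compact set $\Gamma$); and the $G$-framework Doob/Burkholder--Davis--Gundy inequalities for $\int\sigma\,dB$ and for the compensated jump integral against $\widetilde N$, together with the fact from \cite{Pa12,RBK} that these are $\widehat{\mathcal F}^{\mathcal P}_t$-martingales with the usual quadratic-variation control.

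\textbf{Step 1 ($t\in[t_0,t_0+h]$).} Since $x_{t_0}^h=x_{t_0}^\ast$, subtracting the dynamics writes $x_t^h-x_t^\ast$ as the sum of the drift, $dB$, $d\langle B\rangle$ and $\widetilde N$ integrals of the corresponding coefficient differences over $[t_0,t]$. In each I would telescope through an intermediate point, e.g.\ $b(s,x_s^h,\nu)-b(s,x_s^\ast,u_s^\ast)=\big[b(s,x_s^h,\nu)-b(s,x_s^\ast,\nu)\big]+\big[b(s,x_s^\ast,\nu)-b(s,x_s^\ast,u_s^\ast)\big]$, bounding the first bracket by $L|x_s^h-x_s^\ast|$ and the second by $2\|b\|_\infty$; the same splitting applies to $\gamma$ and $f$, while $\sigma$ has no control and is controlled purely by $L|x_s^h-x_s^\ast|$. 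Using Cauchy--Schwarz on the $ds$ and $d\langle B\rangle_s$ terms (each over an interval of length $h$) and BDG on the $dB$ and $\widetilde N$ terms, and writing $\phi(t):=\widehat{\mathbb{E}}\big[\sup_{t_0\le s\le t}|x_s^h-x_s^\ast|^2\big]$, one gets $\phi(t)\le C\int_{t_0}^t\phi(s)\,ds+Ch$, where the leading $Ch$ comes from the control part of the jump integral, whose compensated $L^2$-bound is $\int_{t_0}^{t_0+h}\int_\Gamma(2\|f\|_\infty)^2\,\nu(d\theta)\,ds=O(h)$ (the free drift and $\gamma$ contributions being only $O(h^2)$). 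Gronwall then gives $\phi(t_0+h)\le Ch$.

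\textbf{Step 2 ($t\in[t_0+h,T]$).} Now $u^h$ and $u^\ast$ agree, so every coefficient difference is a pure Lipschitz-in-$x$ term with no free part. With $\psi(t):=\widehat{\mathbb{E}}\big[\sup_{t_0+h\le s\le t}|x_s^h-x_s^\ast|^2\big]$ and the starting gap $\widehat{\mathbb{E}}|x_{t_0+h}^h-x_{t_0+h}^\ast|^2\le\phi(t_0+h)\le Ch$, the same BDG-plus-Cauchy--Schwarz estimates give $\psi(t)\le C\phi(t_0+h)+C\int_{t_0+h}^t\psi(s)\,ds$, so Gronwall yields $\psi(T)\le C\phi(t_0+h)\le Ch$. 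Adding the two windows, $\widehat{\mathbb{E}}\big[\sup_{0\le t\le T}|x_t^h-x_t^\ast|^2\big]\le\phi(t_0+h)+\psi(T)\le Ch$.

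\textbf{Main obstacle.} The only genuinely delicate point is the validity of the moment and martingale inequalities under the sublinear expectation: one must use the $G$-versions of Doob's and BDG's inequalities (not their classical forms) for $\int\sigma\,dB$ and for the $\widetilde N$-integral, verify that $\int_0^{\cdot}\int_\Gamma f\,\widetilde N$ is an $\widehat{\mathcal F}^{\mathcal P}_t$-martingale with quadratic variation bounded uniformly over $\mathbb{P}\in\mathcal P$ (which holds since the compensator $\nu(d\theta)\,dt$ is deterministic and $\nu(\Gamma)<\infty$), and use subadditivity of $\widehat{\mathbb{E}}$ to treat the four integral terms separately. These are all provided by \cite{P10,Pa12,RBK}; granting them, the rest is the routine Gronwall iteration above, and the worst rate is the $O(h)$ from the spike in the controlled jump coefficient.
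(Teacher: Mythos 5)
Your proposal is correct in substance but follows a genuinely different route from the paper. The paper splits the statement in two: for the per-measure limit \eqref{everyp} it fixes $\mathbb{P}\in\mathcal{P}$, observes that under $\mathbb{P}$ the $G$-SDE becomes a classical SDE with jumps (with $\pi_t=\tfrac{d\langle B\rangle_t}{dt}$ bounded by $\overline{\sigma}$), and simply invokes the classical estimate of \cite{HBG}; it then upgrades to \eqref{estx} by a contradiction argument using weak compactness of $\mathcal{P}$ and extraction of a weakly convergent subsequence $(\mathbb{P}_{n_k})$. You instead prove the uniform quantitative bound $\widehat{\mathbb{E}}\bigl[\sup_{0\le t\le T}|x_t^h-x_t^\ast|^2\bigr]\le Ch$ in one pass, by running the two-window Gronwall iteration directly under $\widehat{\mathbb{E}}$, using its subadditivity, the inequality $\widehat{\mathbb{E}}\bigl[\int\cdot\,ds\bigr]\le\int\widehat{\mathbb{E}}[\cdot]\,ds$, and $G$-versions of the Doob/BDG inequalities; this works because the compensator $\nu(d\theta)dt$ and the bound $\overline{\sigma}$ on $\pi_t$ are uniform over $\mathcal{P}$, so every constant is $\mathbb{P}$-independent. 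What each approach buys: the paper's route needs only the classical maximal inequalities (applied measure by measure), but its compactness step is the delicate part --- as written the interchange $\lim_{h}\lim_{k}$ is not justified and one implicitly needs quasi-continuity of $\varsigma_h$ for $\mathbb{E}^{\mathbb{P}_{n_k}}[\varsigma_h]\to\mathbb{E}^{\mathbb{P}}[\varsigma_h]$; your route front-loads the technical burden into the $G$-framework BDG estimates for the $dB$ and $\widetilde{N}$ integrals (available in \cite{P10,Pa12,RBK}), after which the argument is elementary, avoids the compactness step entirely, and yields a rate $O(h)$ that the paper never obtains and that is in fact what is implicitly used later when dividing by $h$ in Lemma \ref{estimatelemma}.
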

	
	For every$\ \mathbb{P}\in\mathcal{P}$, it holds that%
	\begin{equation}
		\lim_{h\rightarrow0}\mathbb{E}^{\mathbb{P}}\left[  \sup_{0\leq t\leq
			T}\left\vert x_{t}^{h}-x_{t}^{\ast}\right\vert ^{2}\right]  =0, \label{everyp}%
	\end{equation}
	and,
	\begin{equation}
		\lim_{h\rightarrow0}\widehat{E}\left[  \sup_{t\in\left[  t_{0};T\right]
		}\left\vert x_{t}^{h}-x_{t}^{\ast}\right\vert ^{2}\right]  =0 \label{estx}%
	\end{equation}

	\begin{proof}
		Under every $\mathbb{P}\in\mathcal{P}$, the G-SDEs (\ref{SDE}) and
		(\ref{relaxedSDE}) becomes standard SDEs driven by a standard Brownian motion
		$B$ and a Poisson measure $\widetilde{N}$, the proof of (\ref{everyp}) follows
		from H.Ben Gherbal and B.Mezerdi \cite{HBG}, we sketch it here. Using the fact
		that under $\mathbb{P}\in\mathcal{P}$, $\widetilde{N}$ is a martingale and $B$
		is a continuous martingale whose quadratic variation process $\langle
		B\rangle$ is such that $\pi_{t}=\frac{d\langle B\rangle_{t}}{dt}$ is bounded
		by a deterministic $d\times d$ symmetric positive definite matrix
		$\overline{\sigma},$and $x^{h}$ satisfy
		\[
		dx_{t}^{h}=b(t,x_{t}^{h},\nu)dt+\sigma(t,x_{t}^{h})dB_{t}{\small +\pi
			_{t}\gamma(t,x_{t}^{h},\nu)dt}+%
		{\displaystyle\int\limits_{\Gamma}}
		{\small f(t,x}_{t^{-}}^{h}{\small ,\theta,\nu)}\widetilde{{\small N}%
		}{\small (dt,d\theta),}%
		\]
		then the result gives by a standard arguments from stochastic calculus, for
		more detail see H.Ben Gherbal and Mezerdi \cite{HBG}.
		
		For the second limit, set
		\[
		\varsigma_{h}=\sup_{t\in\left[  t_{0};T\right]  }\left\vert x_{t}^{h}%
		-x_{t}^{\ast}\right\vert ^{2},
		\]
		if there is a $\theta>0$ such that $\widehat{E}\left[  \varsigma_{h}\right]
		\geq\theta,$ we can find a probability $\mathbb{P}\in\mathcal{P}$ such that
		$\widehat{E}\left[  \varsigma_{h}\right]  \geq\theta-\varepsilon
		;\varepsilon\rightarrow0.$Since $\mathcal{P}$ is weakly compact, there exists
		a subsequence $\left(  \mathbb{P}_{n_{k}}\right)  _{k\geq1}$ that converges
		weakly to some $\mathbb{P}\in\mathcal{P}$, hence
		\[
		\lim_{h\rightarrow0}\mathbb{E}^{\mathbb{P}}\left[  \varsigma_{h}\right]
		=\lim_{h\rightarrow0}\lim_{k\rightarrow\infty}\mathbb{E}^{\mathbb{P}_{n_{k}}%
		}\left[  \varsigma_{h}\right]  \geq\lim\inf_{k\rightarrow\infty}%
		\mathbb{E}^{\mathbb{P}_{n_{k}}}\left[  \varsigma_{h}\right]  \geq\theta.
		\]
		This contradicts (\ref{everyp}). This complete the proof.
	\end{proof}
	
	Since $u^{\ast}$ is optimal, then%
	\[
	J(u^{\ast})\leq J(u^{h})=J(u^{\ast})+\left.  h\frac{dJ(u^{h})}{dh}\right\vert
	_{h=0}+\circ(h)
	\]

	Thus a necessary condition for optimality is that
	\[
	\left.  \frac{dJ(u^{h})}{dh}\right\vert _{h=0}\geq0
	\]

	Note that under every $\mathbb{P}\in\mathcal{P}$, the following properties
	holds, because $b(t,x,u),$ $h(t,x,u)$, $\gamma(t,x,u)$ and $f(t,x_{t^{-}%
	},\theta,u)$ are sufficiently integrable%
	\begin{align}
		&  \frac{1}{h}%
		{\displaystyle\int\limits_{t}^{t+h}}
		\mathbb{E}^{\mathbb{P}}\left[  \left\vert k(s,x_{s},u_{s})-k(t,x_{t}%
		,u_{t})\right\vert ^{2}\right]  \text{ }\underrightarrow{h\rightarrow0}\text{
		}0\text{ }dt-a.e\label{prop1}\\
		&  \frac{1}{h}%
		{\displaystyle\int\limits_{t}^{t+h}}
		\mathbb{E}^{\mathbb{P}}\left[  \left\vert \gamma(s,x_{s},u_{s})-\gamma
		(t,x_{t},u_{t})\right\vert ^{2}\right]  \text{ }\underrightarrow{h\rightarrow
			0}\text{ }0\text{ }d\langle B\rangle_{t}-a.e \label{prop1'}%
	\end{align}

	\begin{equation}
		\frac{1}{h}%
		{\displaystyle\int\limits_{\Gamma}}
		{\displaystyle\int\limits_{t}^{t+h}}
		\mathbb{E}^{\mathbb{P}}\left[  \left\vert f(s,x_{s^{-}},\theta,u_{s}%
		)-f(t,x_{t^{-}},\theta,u_{t})\right\vert ^{2}\right]  \upsilon(d\theta)\text{
		}\underrightarrow{h\rightarrow0}\text{ }0\text{ }dt-a.e \label{prop2}%
	\end{equation}

	where $k$ stands for $b$ or $h.$
	
	\begin{lemma}
		\label{estimatelemma} Under assumptions (H$_{1}$)-(H$_{3}$), it holds that
		\[
		\lim_{h\rightarrow0}\widehat{E}\left[  \left\vert \frac{x_{t}^{h}-x_{t}^{\ast
		}}{h}-z_{t}\right\vert ^{2}\right]  =0.
		\]
		
	\end{lemma}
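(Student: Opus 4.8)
The plan is to compare $\xi^{h}_{t}:=(x^{h}_{t}-x^{\ast}_{t})/h$ with the variational process $z_{t}$, i.e.\ the solution of the linear G-SDE obtained by linearising \eqref{SDE} along $(x^{\ast},u^{\ast})$, the perturbation of $u^{\ast}$ on $[t_{0},t_{0}+h]$ producing the inhomogeneous ``source'' terms of $z$. One then derives a closed linear G-SDE for the remainder $\eta^{h}_{t}:=\xi^{h}_{t}-z_{t}$ and closes it with a Gronwall estimate, first under each fixed $\mathbb{P}\in\mathcal{P}$ and then lifting it to $\widehat{E}$ by the weak-compactness argument already used in the second part of the proof of Lemma~\ref{xhxetoile}.

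Fix $\mathbb{P}\in\mathcal{P}$. As in Lemma~\ref{xhxetoile}, under $\mathbb{P}$ the equation \eqref{SDE} is a classical SDE driven by the martingales $B$ and $\widetilde{N}$ with $d\langle B\rangle_{t}=\pi_{t}\,dt$, $\pi_{t}\le\overline{\sigma}$. Subtracting the equation for $x^{\ast}$ from that for $x^{h}$, dividing by $h$, and inserting for each coefficient $k\in\{b,\gamma,f\}$ the first-order expansion
\[
k(s,x^{h}_{s},\cdot)-k(s,x^{\ast}_{s},\cdot)=\Big(\int_{0}^{1}k_{x}\big(s,x^{\ast}_{s}+\lambda(x^{h}_{s}-x^{\ast}_{s}),\cdot\big)\,d\lambda\Big)(x^{h}_{s}-x^{\ast}_{s}),
\]
together with the analogue for $\sigma$ (which carries no control), one obtains an SDE for $\xi^{h}$ whose leading part is linear, with the linearised coefficients $k_{x}(s,x^{\ast}_{s},u^{\ast}_{s})$ acting on $\xi^{h}_{s}$. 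Writing $\xi^{h}=\eta^{h}+z$ and subtracting the equation defining $z$, the remainder $\eta^{h}$ solves the same linear G-SDE with $k_{x}(s,x^{\ast}_{s},u^{\ast}_{s})$ acting on $\eta^{h}_{s}$ and an inhomogeneous term $\rho^{h}_{s}$ that collects: (i) the Taylor remainders, i.e.\ the gap between $\int_{0}^{1}k_{x}(\cdots,\lambda)\,d\lambda$ and $k_{x}(s,x^{\ast}_{s},u^{\ast}_{s})$ times $\xi^{h}_{s}$; (ii), on $[t_{0},t_{0}+h]$ only, the difference between the perturbation source $h^{-1}\big(k(s,x^{\ast}_{s},\nu)-k(s,x^{\ast}_{s},u^{\ast}_{s})\big)$ (and its jump analogue) and the corresponding limiting source carried by $z$; and (iii), again on $[t_{0},t_{0}+h]$ only, the discrepancy $\big(k_{x}(s,x^{\ast}_{s},\nu)-k_{x}(s,x^{\ast}_{s},u^{\ast}_{s})\big)z_{s}$.

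Applying It\^{o}'s formula to $|\eta^{h}_{t}|^{2}$, the Burkholder--Davis--Gundy inequality to the $dB$- and $\widetilde{N}$-martingale parts, the bounds $\pi_{t}\le\overline{\sigma}$, $\nu(\Gamma)<\infty$, and the boundedness of $k_{x}$, one gets
\[
\mathbb{E}^{\mathbb{P}}\Big[\sup_{t_{0}\le t\le T}|\eta^{h}_{t}|^{2}\Big]\le C\int_{t_{0}}^{T}\mathbb{E}^{\mathbb{P}}\big[|\eta^{h}_{s}|^{2}\big]\,ds+C\,\mathbb{E}^{\mathbb{P}}\Big[\int_{t_{0}}^{T}|\rho^{h}_{s}|^{2}\,ds\Big],
\]
with $C$ depending only on the Lipschitz bounds, on $\overline{\sigma}$ and on $\nu(\Gamma)$ --- in particular not on $\mathbb{P}$ --- so that Gronwall's lemma reduces the claim to $\mathbb{E}^{\mathbb{P}}\big[\int_{t_{0}}^{T}|\rho^{h}_{s}|^{2}\,ds\big]\to0$ as $h\to0$. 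Part (i) tends to $0$ by dominated convergence, since $k_{x}$ is bounded and continuous while $x^{h}\to x^{\ast}$ uniformly in $L^{2}(\mathbb{P})$ by Lemma~\ref{xhxetoile}, so that $\int_{0}^{1}k_{x}(s,x^{\ast}_{s}+\lambda(x^{h}_{s}-x^{\ast}_{s}),u_{s})\,d\lambda\to k_{x}(s,x^{\ast}_{s},u_{s})$, and $\sup_{h}\mathbb{E}^{\mathbb{P}}[\sup_{t}|\xi^{h}_{t}|^{2}]<\infty$ follows a posteriori from the same Gronwall bound (applied to $\eta^{h}$) and $\xi^{h}=\eta^{h}+z$. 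Parts (ii) and (iii) live on the shrinking interval $[t_{0},t_{0}+h]$: the source part of (ii) is controlled precisely by the Lebesgue-point properties \eqref{prop1}, \eqref{prop1'}, \eqref{prop2} and the continuity of the coefficients in $(t,u)$, while (iii) vanishes because $z\in M_{G}^{2}(0,T)$ gives $\mathbb{E}^{\mathbb{P}}\int_{t_{0}}^{t_{0}+h}|z_{s}|^{2}\,ds\to0$. Finally, if $\widehat{E}\big[|\xi^{h}_{t}-z_{t}|^{2}\big]$ did not tend to $0$, one would extract, exactly as in the second part of the proof of Lemma~\ref{xhxetoile}, a sequence $\mathbb{P}_{n_{k}}\to\mathbb{P}\in\mathcal{P}$ weakly along which $\mathbb{E}^{\mathbb{P}_{n_{k}}}\big[|\xi^{h}_{t}-z_{t}|^{2}\big]$ stays bounded away from $0$, contradicting the per-$\mathbb{P}$ convergence just proved; hence the claim.

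The step I expect to be the main obstacle is the uniform-in-$\mathbb{P}$ control of the jump contributions: one has to make sure that the It\^{o}-isometry and BDG constants for $\int_{t_{0}}^{t}\!\int_{\Gamma}f_{x}(s,x^{\ast}_{s^{-}},\theta,u^{\ast}_{s})\eta^{h}_{s^{-}}\,\widetilde{N}(ds,d\theta)$ and for the jump source on $[t_{0},t_{0}+h]$ depend only on the fixed compensator $\nu(d\theta)\,dt$ (and on $\overline{\sigma}$) and not on $\mathbb{P}$, so that the Gronwall constant is genuinely uniform and the passage from ``for every $\mathbb{P}$'' to $\widehat{E}$ is legitimate; a secondary point is to check that \eqref{prop1}--\eqref{prop2}, being $dt$-a.e.\ and $d\langle B\rangle_{t}$-a.e.\ statements, do deliver the $L^{2}$-smallness of the source discrepancy once integrated over $[t_{0},t_{0}+h]$ and $h\to0$.
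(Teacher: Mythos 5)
Your proposal is correct and follows essentially the same route as the paper: the same remainder process ($\eta^{h}$ is the paper's $y^{h}$), the same $\lambda$-integral Taylor representation, the same use of Lemma \ref{xhxetoile} and the Lebesgue-point properties \eqref{prop1}--\eqref{prop2} to kill the sources on $[t_{0},t_{0}+h]$, Gronwall under each fixed $\mathbb{P}$, and the same weak-compactness contradiction to pass to $\widehat{E}$. The only cosmetic difference is that the paper first estimates $y^{h}_{t_{0}+h}$ on $[t_{0},t_{0}+h]$ and then runs Gronwall on $[t_{0}+h,T]$ with that as initial datum, whereas you fold both intervals into a single Gronwall estimate with the source $\rho^{h}$ supported partly on the shrinking interval.
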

	
	\begin{proof}
		We proceed as in H.Ben Gherbal and B.Mezerdi \cite{HBG}, Let%
		\[
		y_{t}^{h}=\frac{x_{t}^{h}-x_{t}^{\ast}}{h}-z_{t}%
		\]

		Then, we have for $t\in\left[  t_{0;}t_{0}+h\right]  $%
		\[
		\left\{
		\begin{array}
			[c]{l}%
			\left.  dy_{t}^{h}=\frac{1}{h}\left[  b(t,x_{t}^{\ast}+h(y_{t}^{h}+z_{t}%
			),\nu)-b(t,x_{t}^{\ast},u_{t}^{\ast})-hb_{x}(t,x_{t}^{\ast},u_{t}^{\ast}%
			)z_{t}\right]  dt\right. \\
			\left.  +\frac{1}{h}\left[  \sigma(t,x_{t}^{\ast}+h(y_{t}^{h}+z_{t}%
			))-\sigma(t,x_{t}^{\ast})-h\sigma_{x}(t,x_{t}^{\ast})z_{t}\right]
			dB_{t}\right. \\
			\left.  \frac{1}{h}\left[  \gamma(t,x_{t}^{\ast}+h(y_{t}^{h}+z_{t}%
			),\nu)-\gamma(t,x_{t}^{\ast},u_{t}^{\ast})-h\gamma_{x}(t,x_{t}^{\ast}%
			,u_{t}^{\ast})z_{t}\right]  d\langle B\rangle_{t}\right. \\
			\left.  \frac{1}{h}%
			{\displaystyle\int\limits_{\Gamma}}
			\left[  f(t,x_{t^{-}}^{\ast}+h(y_{t^{-}}^{h}+z_{t^{-}}),\nu)-f(t,x_{t^{-}%
			}^{\ast},u_{t}^{\ast})-hf_{x}(t,x_{t^{-}}^{\ast},u_{t}^{\ast})z_{t^{-}%
			}\right]  \widetilde{N}{\small (dt,d\theta)}\right. \\
			\left.  y_{t_{0}}^{h}=-\left[  b(t_{0},x_{t_{0}}^{\ast},\nu)-b(t_{0},x_{t_{0}%
			}^{\ast},u_{t_{0}}^{\ast})\right]  .\right.
		\end{array}
		\right.
		\]

		Hence,
		\begin{align*}
			&  \left.  y_{t_{0}+h}^{h}=\frac{1}{h}%
			{\displaystyle\int\limits_{t_{0}}^{t_{0}+h}}
			\left[  b(t,x_{t}^{\ast}+h(y_{t}^{h}+z_{t}),\nu)-b(t,x_{t}^{\ast},\nu)\right]
			dt+\frac{1}{h}%
			{\displaystyle\int\limits_{t_{0}}^{t_{0}+h}}
			\left[  b(t,x_{t}^{\ast},\nu)-b(t,x_{t_{0}}^{\ast},\nu)\right]  dt\right. \\
			&  \left.  +\frac{1}{h}%
			{\displaystyle\int\limits_{t_{0}}^{t_{0}+h}}
			\left[  b(t,x_{t_{0}}^{\ast},\nu)-b(t_{0},x_{t_{0}}^{\ast},\nu)\right]
			dt+\frac{1}{h}%
			{\displaystyle\int\limits_{t_{0}}^{t_{0}+h}}
			\left[  b(t_{0},x_{t_{0}}^{\ast},u_{t_{0}}^{\ast})-b(t,x_{t}^{\ast}%
			,u_{t}^{\ast})\right]  dt\right. \\
			&  \left.  +\frac{1}{h}%
			{\displaystyle\int\limits_{t_{0}}^{t_{0}+h}}
			\left[  \sigma(t,x_{t}^{\ast}+h(y_{t}^{h}+z_{t}))-\sigma(t,x_{t}^{\ast
			})\right]  dB_{t}\right. \\
			&  \left.  \frac{1}{h}%
			{\displaystyle\int\limits_{t_{0}}^{t_{0}+h}}
			\left[  \gamma(t,x_{t}^{\ast}+h(y_{t}^{h}+z_{t}),\nu)-\gamma(t,x_{t}^{\ast
			},\nu)\right]  d\langle B\rangle_{t}+\frac{1}{h}%
			{\displaystyle\int\limits_{t_{0}}^{t_{0}+h}}
			\left[  \gamma(t,x_{t}^{\ast},\nu)-\gamma(t,x_{t_{0}}^{\ast},\nu)\right]
			d\langle B\rangle_{t}\right. \\
			&  \left.  +\frac{1}{h}%
			{\displaystyle\int\limits_{t_{0}}^{t_{0}+h}}
			\left[  \gamma(t,x_{t_{0}}^{\ast},\nu)-\gamma(t_{0},x_{t_{0}}^{\ast}%
			,\nu)\right]  d\langle B\rangle_{t}+\frac{1}{h}%
			{\displaystyle\int\limits_{t_{0}}^{t_{0}+h}}
			\left[  \gamma(t_{0},x_{t_{0}}^{\ast},u_{t_{0}}^{\ast})-\gamma(t,x_{t}^{\ast
			},u_{t}^{\ast})\right]  d\langle B\rangle_{t}\right. \\
			&  \left.  +\frac{1}{h}%
			{\displaystyle\int\limits_{t_{0}}^{t_{0}+h}}
			{\displaystyle\int\limits_{\Gamma}}
			\left[  f{\small (t,x_{t^{-}}^{\ast}+h(y_{t^{-}}^{h}+z_{t^{-}}),\theta,\nu
				)-}f{\small (t,x_{t^{-}}^{\ast},\theta,\nu)}\right]  \widetilde{N}%
			{\small (dt,d\theta)}\right. \\
			&  \left.  {\small +}\frac{1}{h}%
			{\displaystyle\int\limits_{t_{0}}^{t_{0}+h}}
			{\displaystyle\int\limits_{\Gamma}}
			\left[  f{\small (t,x_{t^{-}}^{\ast},\theta,\nu)-}f{\small (t,x_{t_{0}^{-}%
				}^{\ast},\theta,\nu)}\right]  \widetilde{N}{\small (dt,d\theta)}\right. \\
			&  \left.  {\small +}\frac{1}{h}%
			{\displaystyle\int\limits_{t_{0}}^{t_{0}+h}}
			{\displaystyle\int\limits_{\Gamma}}
			\left[  f{\small (t,x_{t_{0}^{-}}^{\ast},\theta,\nu)-}f{\small (t}%
			_{0}{\small ,x_{t_{0}^{-}}^{\ast},\theta,\nu)}\right]  \widetilde{N}%
			{\small (dt,d\theta)}\right. \\
			&  \left.  +\frac{1}{h}%
			{\displaystyle\int\limits_{t_{0}}^{t_{0}+h}}
			{\displaystyle\int\limits_{\Gamma}}
			\left[  f{\small (t}_{0}{\small ,x_{t_{0}^{-}}^{\ast},\theta,\nu
				)-}f{\small (t}_{0}{\small ,x_{t_{0}^{-}}^{\ast},\theta,u_{t_{0}}^{\ast}%
				)}\right]  \widetilde{N}{\small (dt,d\theta)}\right. \\
			&  \left.  {\small +}\frac{1}{h}%
			{\displaystyle\int\limits_{t_{0}}^{t_{0}+h}}
			{\displaystyle\int\limits_{\Gamma}}
			\left[  f{\small (t}_{0}{\small ,x_{t_{0}^{-}}^{\ast},\theta,u_{t_{0}}^{\ast
				})-}f{\small (t,x_{t^{-}}^{\ast},\theta,u_{t}^{\ast})}\right]  \widetilde{N}%
			{\small (dt,d\theta)}\right. \\
			&  \left.  -%
			{\displaystyle\int\limits_{t_{0}}^{t_{0}+h}}
			b_{x}(t,x_{t}^{\ast},u_{t}^{\ast})z_{t}dt-%
			{\displaystyle\int\limits_{t_{0}}^{t_{0}+h}}
			\gamma_{x}(t,x_{t}^{\ast},u_{t}^{\ast})z_{t}d\langle B\rangle_{t}\right. \\
			&  \left.  -%
			{\displaystyle\int\limits_{t_{0}}^{t_{0}+h}}
			\sigma_{x}(t,x_{t}^{\ast})z_{t}dB_{t}-%
			{\displaystyle\int\limits_{t_{0}}^{t_{0}+h}}
			{\displaystyle\int\limits_{\Gamma}}
			f_{x}{\small (t,x_{t^{-}}^{\ast},\theta,u_{t}^{\ast})}z_{t}\widetilde{N}%
			{\small (dt,d\theta)}\right.
		\end{align*}

		Then, under every $\mathbb{P}\in\mathcal{P}$, we have%
		\begin{equation}%
			\begin{array}
				[c]{l}%
				\left.  \mathbb{E}^{\mathbb{P}}\left\vert y_{t_{0}+h}^{h}\right\vert ^{2}\leq
				C\left[  \mathbb{E}^{\mathbb{P}}\sup\limits_{t_{0}\leq t\leq t_{0}%
					+h}\left\vert x_{t}^{h}-x_{t}^{\ast}\right\vert ^{2}+\sup\limits_{t_{0}\leq
					t\leq t_{0}+h}\mathbb{E}^{\mathbb{P}}\left\vert b(t,x_{t_{0}}^{\ast}%
				,\nu)-b(t_{0},x_{t_{0}}^{\ast},\nu)\right\vert ^{2}dt\right.  \right. \\
				\left.  +\frac{1}{h}\mathbb{E}^{\mathbb{P}}%
				{\displaystyle\int\limits_{t_{0}}^{t_{0}+h}}
				\left\vert b(t_{0},x_{t_{0}}^{\ast},u_{t_{0}}^{\ast})-b(t,x_{t}^{\ast}%
				,u_{t}^{\ast})\right\vert ^{2}dt+\mathbb{E}^{\mathbb{P}}\sup\limits_{t_{0}\leq
					t\leq t_{0}+h}\left\vert x_{t}^{\ast}-x_{t_{0}}^{\ast}\right\vert ^{2}\right.
				\\
				\left.  +\sup\limits_{t_{0}\leq t\leq t_{0}+h}\mathbb{E}^{\mathbb{P}%
				}\left\vert \gamma(t,x_{t_{0}}^{\ast},\nu)-\gamma(t_{0},x_{t_{0}}^{\ast}%
				,\nu)\right\vert ^{2}d\langle B\rangle_{t}+\frac{1}{h}\mathbb{E}^{\mathbb{P}}%
				{\displaystyle\int\limits_{t_{0}}^{t_{0}+h}}
				\left\vert \gamma(t_{0},x_{t_{0}}^{\ast},u_{t_{0}}^{\ast})-\gamma
				(t,x_{t}^{\ast},u_{t}^{\ast})\right\vert ^{2}d\langle B\rangle_{t}\right. \\
				\left.  +\mathbb{E}^{\mathbb{P}}%
				{\displaystyle\int\limits_{t_{0}}^{t_{0}+h}}
				{\displaystyle\int\limits_{\Gamma}}
				\left\vert \nu-u_{t_{0}}^{\ast}\right\vert ^{2}\upsilon(d\theta)dt+\mathbb{E}%
				^{\mathbb{P}}%
				{\displaystyle\int\limits_{t_{0}}^{t_{0}+h}}
				\left\vert z_{t}\right\vert ^{2}dt\right. \\
				+\left.  \sup\limits_{t_{0}\leq t\leq t_{0}+h}\mathbb{E}^{\mathbb{P}}%
				{\displaystyle\int\limits_{\Gamma}}
				\left\vert f{\small (t,x_{t_{0}^{-}}^{\ast},\theta,\nu)-}f{\small (t}%
				_{0}{\small ,x_{t_{0}^{-}}^{\ast},\theta,\nu)}\right\vert ^{2}\upsilon
				(d\theta)\right. \\
				\left.  +\frac{1}{h}\mathbb{E}^{\mathbb{P}}%
				{\displaystyle\int\limits_{t_{0}}^{t_{0}+h}}
				{\displaystyle\int\limits_{\Gamma}}
				\left\vert f{\small (t}_{0}{\small ,x_{t_{0}^{-}}^{\ast},\theta,u_{t_{0}%
					}^{\ast})-}f{\small (t,x_{t^{-}}^{\ast},\theta,u_{t}^{\ast})}\right\vert
				^{2}\upsilon(d\theta)dt\right]  .
			\end{array}
			\label{yini}%
		\end{equation}

		By lemma (\ref{xhxetoile}), and the properties (\ref{prop1}), (\ref{prop1'})
		and (\ref{prop2}), it is easy to see that for each $\mathbb{P}\in\mathcal{P}$,
		$\mathbb{E}^{\mathbb{P}}\left\vert y_{t_{0}+h}^{h}\right\vert ^{2}$ tends to
		$0$ as $h\rightarrow0.$
		
		Finally, we deduce that $\widehat{\mathbb{E}}\left\vert y_{t_{0}+h}%
		^{h}\right\vert ^{2}$ tends to $0$ as $h\rightarrow0$ by the same way as in
		the proof of lemma (\ref{xhxetoile}).
		
		For $t\in\left[  t_{0}+h;T\right]  ,$ we denote $x_{t}^{h,\lambda}=x_{t}%
		^{\ast}+\lambda h(y_{t}^{h}+z_{t}),$ then $y_{t}^{h}$ satisfies the following
		SDE%
		\begin{align*}
			&  \left.  dy_{t}^{h}=\frac{1}{h}\left[  b(t,x_{t}^{\ast}+h(y_{t}^{h}%
			+z_{t}),u_{t}^{\ast})-b(t,x_{t}^{\ast},u_{t}^{\ast})\right]  dt+\frac{1}%
			{h}\left[  \sigma(t,x_{t}^{\ast}+h(y_{t}^{h}+z_{t}))-\sigma(t,x_{t}^{\ast
			})\right]  dB_{t}\right. \\
			&  \left.  \frac{1}{h}\left[  \gamma(t,x_{t}^{\ast}+h(y_{t}^{h}+z_{t}%
			),u_{t}^{\ast})-\gamma(t,x_{t}^{\ast},u_{t}^{\ast})\right]  d\langle
			B\rangle_{t}\right. \\
			&  \left.  +\frac{1}{h}%
			{\displaystyle\int\limits_{\Gamma}}
			\left[  f{\small (t,x_{t^{-}}^{\ast}+h(y_{t^{-}}^{h}+z_{t^{-}}),\theta,u}%
			_{t}^{\ast}{\small )-}f{\small (t,x_{t^{-}}^{\ast},\theta,u_{t}^{\ast}%
				)}\right]  \widetilde{N}{\small (dt,d\theta)}\right. \\
			&  \left.  -b_{x}(t,x_{t}^{\ast},u_{t}^{\ast})z_{t}dt-\sigma_{x}(t,x_{t}%
			^{\ast})z_{t}dB_{t}-%
			{\displaystyle\int\limits_{\Gamma}}
			f_{x}{\small (t,x}_{t^{-}}^{\ast}{\small ,\theta,u}_{t}^{\ast}{\small )z_{t}%
			}\widetilde{N}{\small (dt,d\theta),}\right.
		\end{align*}

		then,
		\begin{align*}
			&  \left.  y_{t}^{h}=y_{t_{0}+h}^{h}+%
			{\displaystyle\int\limits_{t_{0}+h}^{t}}
			{\displaystyle\int\limits_{0}^{1}}
			b_{x}(s,x_{s}^{h,\lambda},u_{s}^{\ast})y_{s}^{h}d\lambda ds+%
			{\displaystyle\int\limits_{t_{0}+h}^{t}}
			{\displaystyle\int\limits_{0}^{1}}
			\sigma_{x}(s,x_{s}^{h,\lambda})y_{s}^{h}d\lambda dB_{s}\right. \\
			&  \left.
			{\displaystyle\int\limits_{t_{0}+h}^{t}}
			{\displaystyle\int\limits_{0}^{1}}
			\gamma_{x}(s,x_{s}^{h,\lambda},u_{s}^{\ast})y_{s}^{h}d\lambda d\langle
			B\rangle_{s}+%
			{\displaystyle\int\limits_{0}^{1}}
			{\displaystyle\int\limits_{t_{0}+h}^{t}}
			{\displaystyle\int\limits_{\Gamma}}
			f_{x}{\small (s,x_{s}^{h,\lambda},\theta,u_{s}^{\ast})}y_{s}^{h}%
			d\lambda\widetilde{N}{\small (ds,d\theta)+\rho}_{t}^{h},\right.
		\end{align*}

		where%
		\begin{align*}
			&  \left.  {\small \rho}_{t}^{h}=%
			{\displaystyle\int\limits_{t_{0}+h}^{t}}
			{\displaystyle\int\limits_{0}^{1}}
			b_{x}(s,x_{s}^{h,\lambda},u_{s}^{\ast})z_{s}d\lambda ds+%
			{\displaystyle\int\limits_{t_{0}+h}^{t}}
			{\displaystyle\int\limits_{0}^{1}}
			\sigma_{x}(s,x_{s}^{h,\lambda})z_{s}d\lambda dB_{s}\right. \\
			&  \left.  +%
			{\displaystyle\int\limits_{t_{0}+h}^{t}}
			{\displaystyle\int\limits_{0}^{1}}
			\gamma_{x}(s,x_{s}^{h,\lambda},u_{s}^{\ast})z_{s}d\lambda d\langle
			B\rangle_{s}+%
			{\displaystyle\int\limits_{t_{0}+h}^{t}}
			{\displaystyle\int\limits_{0}^{1}}
			{\displaystyle\int\limits_{\Gamma}}
			f_{x}{\small (s,x_{s}^{h,\lambda},\theta,u_{s}^{\ast})}z_{s}d\lambda
			\widetilde{N}{\small (ds,d\theta)}\right. \\
			&  \left.  {\small -}%
			{\displaystyle\int\limits_{t_{0}+h}^{t}}
			b_{x}(s,x_{s}^{\ast},u_{s}^{\ast})z_{s}ds{\small -}%
			{\displaystyle\int\limits_{t_{0}+h}^{t}}
			\gamma_{x}(s,x_{s}^{\ast},u_{s}^{\ast})z_{s}d\langle B\rangle_{s}-%
			{\displaystyle\int\limits_{t_{0}+h}^{t}}
			\sigma_{x}(s,x_{s}^{\ast})z_{s}dB_{s}\right. \\
			&  \left.  -%
			{\displaystyle\int\limits_{t_{0}+h}^{t}}
			{\displaystyle\int\limits_{\Gamma}}
			f_{x}{\small (s,x}_{s^{-}}^{\ast}{\small ,\theta,u}_{s}^{\ast}{\small )z_{s}%
			}\widetilde{N}{\small (ds,d\theta)}\right.
		\end{align*}

		hence, under every $\mathbb{P}\in\mathcal{P}$, we have
		\begin{align*}
			&  \left.  \mathbb{E}^{\mathbb{P}}\left\vert y_{t}^{h}\right\vert ^{2}%
			\leq\mathbb{E}^{\mathbb{P}}\left\vert y_{t_{0}+h}^{h}\right\vert
			^{2}+K\mathbb{E}^{\mathbb{P}}%
			{\displaystyle\int\limits_{t_{0}+h}^{t}}
			\left\vert
			{\displaystyle\int\limits_{0}^{1}}
			b_{x}(s,x_{s}^{h,\lambda},u_{s}^{h})y_{s}^{h}d\lambda\right\vert ^{2}ds\right.
			\\
			&  \left.  +K\mathbb{E}^{\mathbb{P}}%
			{\displaystyle\int\limits_{t_{0}+h}^{t}}
			\left\vert
			{\displaystyle\int\limits_{0}^{1}}
			\sigma_{x}(s,x_{s}^{h,\lambda})y_{s}^{h}d\lambda\right\vert ^{2}%
			ds+K\mathbb{E}^{\mathbb{P}}%
			{\displaystyle\int\limits_{t_{0}+h}^{t}}
			\left\vert
			{\displaystyle\int\limits_{0}^{1}}
			\gamma_{x}(s,x_{s}^{h,\lambda},u_{s}^{h})y_{s}^{h}d\lambda\right\vert
			^{2}d\langle B\rangle_{s}\right. \\
			&  \left.  +K\mathbb{E}^{\mathbb{P}}%
			{\displaystyle\int\limits_{t_{0}+h}^{t}}
			{\displaystyle\int\limits_{\Gamma}}
			\left\vert
			{\displaystyle\int\limits_{0}^{1}}
			f_{x}{\small (s,x_{s}^{h,\lambda},\theta,u_{s}^{h})}y_{s}^{h}d\lambda
			\right\vert ^{2}\upsilon{\small (d\theta)ds+K}\mathbb{E}^{\mathbb{P}%
			}\left\vert {\small \rho}_{t}^{h}\right\vert ^{2}\right.  .
		\end{align*}

		Since $b_{x},$ $\sigma_{x},$ $\gamma_{x}$ and $f_{x}$ are bounded, then
		\[
		\mathbb{E}^{\mathbb{P}}\left\vert y_{t}^{h}\right\vert ^{2}\leq\mathbb{E}%
		^{\mathbb{P}}\left\vert y_{t_{0}+h}^{h}\right\vert ^{2}+C\mathbb{E}%
		^{\mathbb{P}}%
		{\displaystyle\int\limits_{0}^{t}}
		\left\vert y_{s}^{h}\right\vert ^{2}ds+{\small K}\mathbb{E}^{\mathbb{P}%
		}\left\vert {\small \rho}_{t}^{h}\right\vert ^{2}%
		\]

		We conclude by the continuity of $b_{x},$ $\sigma_{x},$ $\gamma_{x}$ and
		$f_{x}$, and the dominated convergence that $\lim_{h\rightarrow0}{\small \rho
		}_{t}^{h}=0.$ Hence by the Gronwall lemma, and (\ref{yini}) we get
		\[
		\lim_{h\rightarrow0}\sup_{t_{0}+h\leq t\leq T}\mathbb{E}^{\mathbb{P}%
		}\left\vert y_{t}^{h}\right\vert ^{2}=0.
		\]

		Finally, we deduce that $\widehat{\mathbb{E}}\left\vert y_{t}^{h}\right\vert
		^{2}$ tends to $0$ as $h\rightarrow0$ by the same way as in the proof of lemma
		(\ref{xhxetoile}).
		
		The second estimate is proved in a similar way.
	\end{proof}
	
	Choose $t_{0}$ such that (\ref{prop1}), (\ref{prop1'}) and (\ref{prop2})
	holds, then we have
	
	\begin{corollary}
		\label{corDJ}Under assumptions (H$_{1}$)-(H$_{3}$), one has
		\begin{equation}
			0\leq\left.  \frac{dJ(u^{h})}{dh}\right\vert _{h=0}\leq\widehat{\mathbb{E}%
			}\left[  g_{x}(x_{T}^{\ast})z_{T}+%
			{\displaystyle\int\limits_{0}^{T}}
			h_{x}(t,x_{t}^{\ast},u_{t}^{\ast})z_{t}dt\right]  \label{dJ}%
		\end{equation}
		where the process $z$ is the solution of the linear SDE%
		\begin{equation}
			\left\{
			\begin{array}
				[c]{l}%
				\left.  dz_{t}=b_{x}(t,x_{t}^{\ast},u_{t}^{\ast})z_{t}dt+\sigma_{x}%
				(t,x_{t}^{\ast})z_{t}dB_{t}+\gamma_{x}(t,x_{t}^{\ast},u_{t}^{\ast}%
				)z_{t}d\langle B\rangle_{t}\right. \\
				\left.  +%
				{\displaystyle\int\limits_{\Gamma}}
				f_{x}(t,x_{t^{-}}^{\ast},\theta,u_{t}^{\ast})z_{t^{-}}\widetilde{N}%
				(dt,d\theta);\text{ }t_{0}\leq t\leq T\right. \\
				\left.  z_{t_{0}}=\left[  b(t_{0},x_{t_{0}}^{\ast},\nu)-b(t_{0},x_{t_{0}%
				}^{\ast},u_{t_{0}}^{\ast})\right]  \right.
			\end{array}
			\right.  \label{zt}%
		\end{equation}
		
	\end{corollary}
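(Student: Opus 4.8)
The plan is to exploit the spike perturbation $u^{h}$ together with the two stability estimates already established: Lemma~\ref{xhxetoile} (so that $x^{h}\to x^{\ast}$ in the $\sup$-$L^{2}$ sense, quasi-surely) and Lemma~\ref{estimatelemma} (so that $(x^{h}-x^{\ast})/h\to z$ in $M_{G}^{2}$, with $z$ the linear variational process (\ref{zt})). The first inequality is immediate: optimality of $u^{\ast}$ gives $J(u^{h})\geq J(u^{\ast})$ for all small $h>0$, so $h^{-1}\big(J(u^{h})-J(u^{\ast})\big)\geq0$, and hence, once the right derivative at $0$ is known to exist (a by-product of the computation below), $\left.\tfrac{dJ(u^{h})}{dh}\right|_{h=0}\geq0$. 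So everything reduces to the upper bound on this derivative.

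\textbf{From the cost difference to a difference of integrands.} By sub-additivity of the sublinear expectation, $\widehat{\mathbb{E}}[X]-\widehat{\mathbb{E}}[Y]\leq\widehat{\mathbb{E}}[X-Y]$, whence
\[
\frac{J(u^{h})-J(u^{\ast})}{h}\leq\frac1h\,\widehat{\mathbb{E}}\Big[\,g(x_{T}^{h})-g(x_{T}^{\ast})+\int_{0}^{T}\big(h(t,x_{t}^{h},u_{t}^{h})-h(t,x_{t}^{\ast},u_{t}^{\ast})\big)\,dt\,\Big].
\]
I would then linearise each increment by the fundamental theorem of calculus exactly as in the proof of Lemma~\ref{estimatelemma}, e.g. $g(x_{T}^{h})-g(x_{T}^{\ast})=\big(\int_{0}^{1}g_{x}(x_{T}^{\ast}+\lambda(x_{T}^{h}-x_{T}^{\ast}))\,d\lambda\big)(x_{T}^{h}-x_{T}^{\ast})$, and the analogous identity for $h$ on any sub-interval where the control is not perturbed.

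\textbf{The three time intervals.} Split $\int_{0}^{T}=\int_{0}^{t_{0}}+\int_{t_{0}}^{t_{0}+h}+\int_{t_{0}+h}^{T}$. On $[0,t_{0}]$ one has $x^{h}\equiv x^{\ast}$ and $u^{h}\equiv u^{\ast}$, so the contribution is $0$. On $[t_{0}+h,T]$ one has $u^{h}\equiv u^{\ast}$, so
\[
\frac1h\int_{t_{0}+h}^{T}\!\!\big(h(t,x_{t}^{h},u_{t}^{\ast})-h(t,x_{t}^{\ast},u_{t}^{\ast})\big)dt=\int_{t_{0}+h}^{T}\!\!\Big(\int_{0}^{1}h_{x}\big(t,x_{t}^{\ast}+\lambda(x_{t}^{h}-x_{t}^{\ast}),u_{t}^{\ast}\big)d\lambda\Big)\frac{x_{t}^{h}-x_{t}^{\ast}}{h}\,dt,
\]
which, by Lemma~\ref{estimatelemma} ($(x^{h}-x^{\ast})/h\to z$ in $M_{G}^{2}$), Lemma~\ref{xhxetoile} ($x^{h}\to x^{\ast}$), and the boundedness and continuity of $h_{x}$, converges to $\int_{t_{0}}^{T}h_{x}(t,x_{t}^{\ast},u_{t}^{\ast})z_{t}\,dt$ as $h\to0$; the terminal term $h^{-1}(g(x_{T}^{h})-g(x_{T}^{\ast}))$ is handled identically and converges to $g_{x}(x_{T}^{\ast})z_{T}$. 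On the remaining short interval, $h^{-1}\int_{t_{0}}^{t_{0}+h}\big(h(t,x_{t}^{h},\nu)-h(t,x_{t}^{\ast},u_{t}^{\ast})\big)dt$ is an average of a bounded continuous integrand over an interval shrinking to the Lebesgue point $t_{0}$ (recall $t_{0}$ is chosen so that (\ref{prop1}) holds) with $x^{h}\to x^{\ast}$ uniformly and $x_{t}^{\ast}\to x_{t_{0}}^{\ast}$ as $t\downarrow t_{0}$; it is therefore controlled and passes to its limit, and collecting the three contributions gives the upper bound in (\ref{dJ}).

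\textbf{Main obstacle.} The one non-routine point is that $\widehat{\mathbb{E}}$ is only sub-linear, so one cannot differentiate under it, nor apply dominated convergence, directly. The remedy is the device already used to pass from (\ref{everyp}) to (\ref{estx}) in the proof of Lemma~\ref{xhxetoile}: fix $\mathbb{P}\in\mathcal{P}$, under which (\ref{SDE}) reduces to a classical SDE; prove $L^{1}(\mathbb{P})$-convergence of the quantities above using Cauchy--Schwarz together with the $M_{G}^{2}$-bounds of Lemmas~\ref{xhxetoile}--\ref{estimatelemma}; and then lift the convergence to $\widehat{\mathbb{E}}=\sup_{\mathbb{P}\in\mathcal{P}}\mathbb{E}^{\mathbb{P}}$ via the weak compactness of $\mathcal{P}$ and the subsequence/contradiction argument. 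Combining the resulting $\limsup_{h\to0}h^{-1}(J(u^{h})-J(u^{\ast}))\leq\widehat{\mathbb{E}}[g_{x}(x_{T}^{\ast})z_{T}+\int_{0}^{T}h_{x}(t,x_{t}^{\ast},u_{t}^{\ast})z_{t}\,dt]$ with the first inequality completes the proof.
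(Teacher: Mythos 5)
Your proposal follows essentially the same route as the paper's own proof: bound the difference quotient using sub-additivity of $\widehat{\mathbb{E}}$, linearise the increments of $g$ and $h$ by the fundamental theorem of calculus along $x^{\ast}+\lambda(x^{h}-x^{\ast})$, and pass to the limit via Lemma~\ref{estimatelemma}. In fact you are more careful than the paper on the one delicate point: the paper simply writes ``From Lemma~\ref{estimatelemma}, we obtain (\ref{dJ}) by letting $h$ tend to $0$,'' whereas you correctly observe that one cannot differentiate or apply dominated convergence under a sublinear expectation and must first work under each fixed $\mathbb{P}\in\mathcal{P}$ and then lift the convergence by the weak-compactness/subsequence argument already used to pass from (\ref{everyp}) to (\ref{estx}). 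That addition is a genuine improvement in rigour.

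There is, however, one concrete loose end in your treatment of the spike interval. You assert that $\frac{1}{h}\int_{t_{0}}^{t_{0}+h}\bigl(h(t,x_{t}^{h},\nu)-h(t,x_{t}^{\ast},u_{t}^{\ast})\bigr)\,dt$ ``is controlled and passes to its limit'' and that collecting the three contributions yields (\ref{dJ}); but at a Lebesgue point $t_{0}$ this term converges to $h(t_{0},x_{t_{0}}^{\ast},\nu)-h(t_{0},x_{t_{0}}^{\ast},u_{t_{0}}^{\ast})$, which is in general nonzero, has no definite sign, and does not appear on the right-hand side of (\ref{dJ}). This is exactly the running-cost analogue of the nonzero initial condition $z_{t_{0}}=b(t_{0},x_{t_{0}}^{\ast},\nu)-b(t_{0},x_{t_{0}}^{\ast},u_{t_{0}}^{\ast})$ in (\ref{zt}), and in the classical spike-variation maximum principle it survives into the Hamiltonian increment. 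Your argument as written therefore does not actually close the stated inequality; you would need either to show this term is dominated by the right-hand side or to carry it along explicitly. To be fair, the paper's own proof has the same defect (its intermediate display even contains an extra $h_{u}$ term that silently disappears from the final statement), so the gap is inherited from the source rather than introduced by you, but it should be acknowledged rather than elided.
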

	
	We use the same notations as in the proof of Lemma (\ref{estimatelemma}), to
	prove this corollary.
	
	\begin{proof}
		[proof ]We have by the definition of $J$ that
		\[
		\frac{1}{h}\left[  J(u^{h})-J(u^{\ast})\right]  \leq\frac{1}{h}%
		\widehat{\mathbb{E}}\left[
		{\displaystyle\int\limits_{t_{0}}^{T}}
		h(t,x_{t}^{h},u_{t}^{h})+g(x_{T}^{h})\right]  -\widehat{\mathbb{E}}\left[
		{\displaystyle\int\limits_{t_{0}}^{T}}
		h(t,x_{t}^{\ast},u_{t}^{\ast})+g(x_{T}^{\ast})\right]  dt
		\]

		then,%
		\[%
		\begin{array}
			[c]{l}%
			0\leq\frac{1}{h}\left[  J(u^{h})-J(u^{\ast})\right]  \leq\widehat{\mathbb{E}%
			}\left[
			{\displaystyle\int\limits_{0}^{1}}
			g_{x}(x_{T}^{h,\lambda})z_{T}d\lambda+\frac{1}{h}%
			{\displaystyle\int\limits_{0}^{T}}
			{\displaystyle\int\limits_{0}^{1}}
			h_{x}(t,x_{t}^{h,\lambda},u_{t}^{h})z_{tt}^{h}d\lambda dt\right. \\
			\left.  +%
			{\displaystyle\int\limits_{0}^{T}}
			{\displaystyle\int\limits_{0}^{1}}
			h_{u}(t,x_{t}^{\ast},u_{t}^{h,\lambda})u_{t}d\lambda dt\right]
		\end{array}
		.
		\]

		From Lemma (\ref{estimatelemma}), we obtain (\ref{dJ}) by letting $h$ tend to
		$0.$\bigskip
	\end{proof}
	
	Let us introduce the adjoint process, which is a $G$-backward stochastic
	differential equation (G-BSDE in short). We proceed as in \cite{bensoussan},
	\cite{base}.and \cite{HBG}.
	
	By the integration by parts formula, we can see that the solution of $dz_{t}$
	is given by $z_{t}=\varphi_{t}\eta_{t}$ where
	\[
	\left\{
	\begin{array}
		[c]{l}%
		\left.
		\begin{array}
			[c]{c}%
			d\varphi(t,\tau)=b_{x}(t,x_{t}^{\ast},u_{t}^{\ast})\varphi(t,\tau
			)dt+\sigma_{x}(t,x_{t}^{\ast})\varphi(t,\tau)dB_{t}\\
			+%
			{\displaystyle\int\limits_{\Gamma}}
			f_{x}(t,x_{t^{-}}^{\ast},\theta,u_{t}^{\ast})\varphi(t^{-},\tau)\widetilde{N}%
			(dt,d\theta)+\gamma_{x}(t,x_{t}^{\ast},u_{t}^{\ast})d\langle B\rangle_{t}%
		\end{array}
		\right.  \text{ \ \ }0\leq\tau\leq t\leq T,\\
		\varphi(\tau,\tau)=I_{d}%
	\end{array}
	\right.
	\]

	and
	\[
	\left\{
	\begin{array}
		[c]{l}%
		\left.
		\begin{array}
			[c]{l}%
			d\eta_{t}=\psi_{t}\left\{  b_{u}(t,x_{t}^{\ast},u_{t}^{\ast})u_{t}-%
			{\displaystyle\int\limits_{\Gamma}}
			f_{u}(t,x_{t^{-}}^{\ast},\theta,u_{t}^{\ast})u_{t}\upsilon(d\theta)\right\}
			dt\\
			-\psi_{t^{-}}%
			{\displaystyle\int\limits_{\Gamma}}
			\left(  f_{x}(t,x_{t^{-}}^{\ast},\theta,u_{t}^{\ast})+I_{d}\right)  ^{-1}%
			f_{u}(t,x_{t^{-}}^{\ast},\theta,u_{t}^{\ast})u_{t}N(dt,d\theta)\\
			+\psi_{t}\gamma_{u}(t,x_{t}^{\ast},u_{t}^{\ast})u_{t}d\langle B\rangle_{t}%
		\end{array}
		\right.  \text{ \ }\\
		\eta_{0}=0,
	\end{array}
	\right.
	\]

	with $\psi_{t}$ is the inverse of $\varphi$ satisfying suitable integrability
	conditions, and it is the solution of the following equation%
	\[
	\left\{
	\begin{array}
		[c]{l}%
		\left.
		\begin{array}
			[c]{l}%
			\begin{array}
				[c]{l}%
				\begin{array}
					[c]{l}%
					d\psi(t,\tau)=\left\{  \sigma_{x}(t,x_{t}^{\ast})\psi(t,\tau)\sigma
					_{x}(t,x_{t}^{\ast})-b_{x}(t,x_{t}^{\ast},u_{t}^{\ast})\psi(t,\tau)\right. \\
					\left.  -%
					{\displaystyle\int\limits_{\Gamma}}
					f_{x}(t,x_{t^{-}}^{\ast},\theta,u_{t}^{\ast})\psi(t^{-},\tau)\upsilon
					(d\theta)\right\}  dt
				\end{array}
				\\
				-\sigma_{x}(t,x_{t}^{\ast})\psi(t,\tau)dB_{t}-\gamma_{x}(t,x_{t}^{\ast}%
				,u_{t}^{\ast})d\langle B\rangle_{t}%
			\end{array}
			\\
			-\psi(t^{-},\tau)%
			{\displaystyle\int\limits_{\Gamma}}
			\left(  f_{x}(t,x_{t^{-}}^{\ast},\theta,u_{t}^{\ast})+I_{d}\right)  ^{-1}%
			f_{x}(t,x_{t^{-}}^{\ast},\theta,u_{t}^{\ast})N(dt,d\theta)
		\end{array}
		\right.  \text{ \ \ }0\leq\tau\leq t\leq T\\
		\psi(\tau,\tau)=I_{d}.
	\end{array}
	\right.
	\]

	\begin{remark}
		\begin{enumerate}
			\item From It\^o's formula, we can easily check that $d\left(  \varphi
			(t,\tau)\psi(t,\tau)\right)  =0,$ and $\varphi(\tau,\tau)\psi(\tau,\tau
			)=I_{d}.$
			
			\item If $\tau=0,$ we simply write $\varphi(t,0)=\varphi_{t}$ and
			$\psi(t,0)=\psi_{t}.$
		\end{enumerate}
	\end{remark}
	
	Then the equality (\ref{dJ}) will become%
	\begin{equation}
		\left.
		\begin{array}
			[c]{l}%
			\left.  \frac{dJ(u^{h})}{dh}\right\vert _{h=0}=\widehat{\mathbb{E}}\left[
			{\displaystyle\int\limits_{0}^{T}}
			\left\{  h_{x}(t,x_{t}^{\ast},u_{t}^{\ast})\varphi_{t}\eta_{t}+h_{u}%
			(t,x_{t}^{\ast},u_{t}^{\ast})u_{t}\right\}  dt\right. \\
			\left.  +g_{x}(x_{T}^{\ast})\varphi_{T}\eta_{T}\right]
		\end{array}
		\right.  \label{***}%
	\end{equation}

	Set
	\begin{align*}
		X  &  =%
		{\displaystyle\int\limits_{0}^{T}}
		h_{x}(t,x_{t}^{\ast},u_{t}^{\ast})\varphi_{t}^{\ast}dt+g_{x}(x_{T}^{\ast
		})\varphi_{T}^{\ast}\\
		y_{t}  &  =\widehat{\mathbb{E}}\left[  X\diagup\mathcal{F}_{t}\right]  -%
		{\displaystyle\int\limits_{0}^{t}}
		h_{x}(s,x_{s}^{\ast},u_{s}^{\ast})\varphi_{s}^{\ast}ds+%
		{\displaystyle\int\limits_{0}^{t}}
		dk_{s}%
	\end{align*}

	then, we have
	\begin{equation}
		\left.
		\begin{array}
			[c]{l}%
			y_{T}=\widehat{\mathbb{E}}\left[  X\diagup\mathcal{F}_{t}\right]  -%
			{\displaystyle\int\limits_{0}^{T}}
			h_{x}(s,x_{s}^{\ast},u_{s}^{\ast})\varphi_{s}^{\ast}ds+%
			{\displaystyle\int\limits_{0}^{T}}
			dk_{t}\\
			=X-%
			{\displaystyle\int\limits_{0}^{T}}
			h_{x}(s,x_{s}^{\ast},u_{s}^{\ast})\varphi_{s}^{\ast}ds=g_{x}(x_{T}^{\ast
			})\varphi_{T}^{\ast}+%
			{\displaystyle\int\limits_{0}^{T}}
			dk_{t}%
		\end{array}
		\right.  \label{yT}%
	\end{equation}

	replacing (\ref{yT}) in (\ref{***}), we obtain%
	\begin{equation}
		\left.  \frac{dJ(u^{h})}{dh}\right\vert _{h=0}=\widehat{\mathbb{E}}\left[
		{\displaystyle\int\limits_{0}^{T}}
		\left\{  h_{x}(t,x_{t}^{\ast},u_{t}^{\ast})\varphi_{t}^{\ast}\eta_{t}%
		+h_{u}(t,x_{t}^{\ast},u_{t}^{\ast})u_{t}\right\}  dt+y_{T}\eta_{T}\right]  .
		\label{dJ+++}%
	\end{equation}

	By the It\^o representation theorem of a $G$-martingale (see \cite{peng-pre}),
	there exist two processes $Q\in M_{G}^{2}\left(  0,T\right)  ,S_{s}\in S(d)$
	and $R\in\mathcal{L}_{G}^{2}\left(  0,T\right)  $ satisfying
	\[
	\widehat{\mathbb{E}}\left[  X\diagup\mathcal{F}_{t}\right]
	=\widehat{\mathbb{E}}\left[  X\right]  +%
	{\displaystyle\int\limits_{0}^{t}}
	Q_{s}dB_{s}+%
	{\displaystyle\int\limits_{0}^{t}}
	\varphi_{s}^{\ast}S_{s}d\langle B\rangle_{s}-2%
	{\displaystyle\int\limits_{0}^{t}}
	\varphi_{s}^{\ast}G(S_{s})ds+%
	{\displaystyle\int\limits_{0}^{t}}
	{\displaystyle\int\limits_{\Gamma}}
	R_{s}(\theta)\widetilde{N}(ds,d\theta),
	\]
	where $G$ the generator $G:S(d)\rightarrow%
	\mathbb{R}
	$ satisfying the uniformly elliptic condition, i.e., there exists a $\beta>0$
	such that, for each $A,\overline{A}\in S(d)$ with $A\geq\overline{A},$
	\[
	G(A)-G(\overline{A})\geq\beta tr[A-\overline{A}].
	\]
	Hence,
	\[
	\left.
	\begin{array}
		[c]{l}%
		y_{t}=\widehat{\mathbb{E}}\left[  X\right]  -%
		{\displaystyle\int\limits_{0}^{t}}
		\left(  h_{x}(s,x_{s}^{\ast},u_{s}^{\ast})\varphi_{s}+2\varphi_{s}^{\ast
		}G(S_{s})\right)  ds+%
		{\displaystyle\int\limits_{0}^{t}}
		Q_{s}dB_{s}\\
		+%
		{\displaystyle\int\limits_{0}^{t}}
		{\displaystyle\int\limits_{\Gamma}}
		R_{s}(\theta)\widetilde{N}(ds,d\theta)+%
		{\displaystyle\int\limits_{0}^{t}}
		dk_{s}+%
		{\displaystyle\int\limits_{0}^{t}}
		\varphi_{s}^{\ast}S_{s}d\langle B\rangle_{s}%
	\end{array}
	\right.
	\]

	Now, let us calculate $\widehat{\mathbb{E}}\left[  y_{T}\eta_{T}\right]  ,$ we
	have
	\[
	dy_{t}=-\left(  h_{x}(s,x_{s}^{\ast},u_{s}^{\ast})\varphi_{s}+2\varphi
	_{s}^{\ast}G(S_{s})\right)  dt+Q_{t}dB_{t}+%
	{\displaystyle\int\limits_{\Gamma}}
	R_{t}(\theta)\widetilde{N}(dt,d\theta)+dk_{t}+\varphi_{s}^{\ast}S_{t}d\langle
	B\rangle_{t},
	\]

	by the integration by parts formula we get
	\[
	\left.
	\begin{array}
		[c]{l}%
		d(y_{t}\eta_{t})=y_{t}\psi_{t}\left[  b_{u}(t,x_{t}^{\ast},u_{t}^{\ast})u_{t}-%
		{\displaystyle\int\limits_{\Gamma}}
		f_{u}(t,x_{s}^{\ast},\theta,u_{s}^{\ast})u_{t}\upsilon(d\theta)\right]  dt\\
		-y_{t}\psi_{t^{-}}%
		{\displaystyle\int\limits_{\Gamma}}
		\left(  f_{x}+Id\right)  ^{-1}f_{u}u_{t}N(dt,d\theta)-\left(  \eta_{t}%
		\varphi_{t}^{\ast}h_{x}+2\eta_{t}\varphi_{s}^{\ast}G(S_{t})\right)  dt\\
		+\eta_{t}Q_{t}dB_{t}+%
		{\displaystyle\int\limits_{\Gamma}}
		\eta_{t}R_{t}(\theta)\widetilde{N}(dt,d\theta)+\left\{  y_{t}\psi_{t}%
		\gamma_{u}(t,x_{t}^{\ast},u_{t}^{\ast})u_{t}+q_{t}\sigma_{x}\eta_{t}%
		\varphi_{t}^{\ast}+\eta_{t}\varphi_{s}^{\ast}S_{t}\right\}  d\langle
		B\rangle_{t}\\
		+%
		{\displaystyle\int\limits_{\Gamma}}
		R_{t}(\theta)\psi_{t}\left(  f_{x}+Id\right)  ^{-1}f_{u}u_{t}\upsilon
		(d\theta)dt+\eta_{t}\varphi_{t}^{\ast}dk_{t}.
	\end{array}
	\right.
	\]

	If we define the adjoint process by : $p_{t}=y_{t}\psi_{t},$ then
	\[
	\left.
	\begin{array}
		[c]{l}%
		d(y_{t}\eta_{t})=p_{t}b_{u}udt-p_{t}%
		{\displaystyle\int\limits_{\Gamma}}
		f_{u}u\upsilon(d\theta)dt-p_{t}%
		{\displaystyle\int\limits_{\Gamma}}
		\left(  f_{x}+Id\right)  ^{-1}f_{u}u_{t}\widetilde{N}(dt,d\theta)\\
		-p_{t}%
		{\displaystyle\int\limits_{\Gamma}}
		\left(  f_{x}+Id\right)  ^{-1}f_{u}u_{t}\upsilon(d\theta)dt-\left(  \eta
		_{t}\varphi_{t}^{\ast}h_{x}+2\eta_{t}\varphi_{s}^{\ast}G(S_{t})\right)
		dt+\eta_{t}Q_{t}dB_{t}\\
		+\left\{  p_{t}\gamma_{u}(t,x_{t}^{\ast},u_{t}^{\ast})u_{t}+q_{t}\sigma
		_{x}\eta_{t}\varphi_{t}^{\ast}+\eta_{t}\varphi_{s}^{\ast}S_{t}\right\}
		d\langle B\rangle_{t}++\eta_{t}\varphi_{t}^{\ast}dk_{t}\\
		+%
		{\displaystyle\int\limits_{\Gamma}}
		\eta_{t}R_{t}(\theta)\widetilde{N}(dt,d\theta)+%
		{\displaystyle\int\limits_{\Gamma}}
		R_{t}(\theta)\psi_{t}\left(  f_{x}+Id\right)  ^{-1}f_{u}u_{t}\upsilon
		(d\theta)dt,
	\end{array}
	\right.
	\]

	Hence%
	\begin{align*}
		y_{T}\eta_{T}  &  =%
		{\displaystyle\int\limits_{0}^{T}}
		p_{t}b_{u}udt-%
		{\displaystyle\int\limits_{0}^{T}}
		{\displaystyle\int\limits_{\Gamma}}
		p_{t}f_{u}u\upsilon(d\theta)dt-%
		{\displaystyle\int\limits_{0}^{T}}
		{\displaystyle\int\limits_{\Gamma}}
		p_{t}\left(  f_{x}+Id\right)  ^{-1}f_{u}u_{t}\widetilde{N}(dt,d\theta)\\
		&  -%
		{\displaystyle\int\limits_{0}^{T}}
		{\displaystyle\int\limits_{\Gamma}}
		p_{t}\left(  f_{x}+Id\right)  ^{-1}f_{u}u_{t}\upsilon(d\theta)dt-%
		{\displaystyle\int\limits_{0}^{T}}
		\left(  \eta_{t}\varphi_{t}^{\ast}h_{x}+2\eta_{t}\varphi_{s}^{\ast}%
		G(S_{t})\right)  dt+%
		{\displaystyle\int\limits_{0}^{T}}
		\eta_{t}Q_{t}dB_{t}\\
		&  +%
		{\displaystyle\int\limits_{0}^{T}}
		\left\{  p_{t}\gamma_{u}(t,x_{t}^{\ast},u_{t}^{\ast})u_{t}+q_{t}\sigma_{x}%
		\eta_{t}\varphi_{t}^{\ast}+\eta_{t}S_{t}\right\}  d\langle B\rangle_{t}+%
		{\displaystyle\int\limits_{0}^{T}}
		\eta_{t}\varphi_{t}^{\ast}dk_{t}\\
		&  +%
		{\displaystyle\int\limits_{0}^{T}}
		{\displaystyle\int\limits_{\Gamma}}
		\eta_{t}R_{t}(\theta)\widetilde{N}(dt,d\theta)+%
		{\displaystyle\int\limits_{0}^{T}}
		{\displaystyle\int\limits_{\Gamma}}
		R_{t}(\theta)\psi_{t}\left(  f_{x}+Id\right)  ^{-1}f_{u}u_{t}\upsilon
		(d\theta)dt,
	\end{align*}

	take the G-expectation, we obtain%
	\[%
	\begin{array}
		[c]{l}%
		\widehat{\mathbb{E}}\left[  y_{T}\eta_{T}\right]  =\widehat{\mathbb{E}}\left[
		%
		{\displaystyle\int\limits_{0}^{T}}
		p_{t}b_{u}udt+%
		{\displaystyle\int\limits_{0}^{T}}
		\left[
		{\displaystyle\int\limits_{\Gamma}}
		R_{t}(\theta)\psi_{t}\left(  f_{x}+Id\right)  ^{-1}-p_{t}\left(  \left(
		f_{x}+Id\right)  ^{-1}+Id\right)  \right]  f_{u}u_{t}\upsilon(d\theta
		)dt\right. \\
		\left.  +%
		{\displaystyle\int\limits_{0}^{T}}
		\left\{  p_{t}\gamma_{u}(t,x_{t}^{\ast},u_{t}^{\ast})u_{t}+q_{t}\sigma_{x}%
		\eta_{t}\varphi_{t}^{\ast}+\eta_{t}\varphi_{s}^{\ast}S_{t}\right\}  d\langle
		B\rangle_{t}+%
		{\displaystyle\int\limits_{0}^{T}}
		\eta_{t}\varphi_{t}^{\ast}dk_{t}-%
		{\displaystyle\int\limits_{0}^{T}}
		\left(  \eta_{t}\varphi_{t}^{\ast}h_{x}+2\eta_{t}\varphi_{s}^{\ast}%
		G(S_{t})\right)  dt\right]
	\end{array}
	\]

	We define the adjoint process $r$ by%
	\[
	r_{t}(\theta)=R_{t}(\theta)\psi_{t}\left(  f_{x}+Id\right)  ^{-1}-p_{t}\left(
	\left(  f_{x}+Id\right)  ^{-1}+Id\right)  ,
	\]

	hence,
	\[
	\left.
	\begin{array}
		[c]{c}%
		\widehat{\mathbb{E}}\left[  y_{T}\eta_{T}\right]  =\widehat{\mathbb{E}}\left[
		%
		{\displaystyle\int\limits_{0}^{T}}
		\left\{  p_{t}b_{u}u+%
		{\displaystyle\int\limits_{\Gamma}}
		r_{t}(\theta)f_{u}u_{t}\upsilon(d\theta)\right\}  dt-%
		{\displaystyle\int\limits_{0}^{T}}
		\left(  \eta_{t}\varphi_{t}^{\ast}h_{x}+2\eta_{t}\varphi_{s}^{\ast}%
		G(S_{t})\right)  dt\right. \\
		\multicolumn{1}{l}{\left.  +%
			{\displaystyle\int\limits_{0}^{T}}
			\left\{  \gamma_{u}(t,x_{t}^{\ast},u_{t}^{\ast})u_{t}+q_{t}\sigma_{x}\eta
			_{t}\varphi_{t}^{\ast}+\eta_{t}\varphi_{s}^{\ast}S_{t}\right\}  d\langle
			B\rangle_{t}+%
			{\displaystyle\int\limits_{0}^{T}}
			\eta_{t}\varphi_{t}^{\ast}dk_{t}\right]  .}%
	\end{array}
	\right.
	\]

	By the replacing in (\ref{dJ+++}), we get
	\begin{equation}%
		\begin{array}
			[c]{l}%
			\left.  \frac{dJ(u^{h})}{dh}\right\vert _{h=0}=E\left[
			{\displaystyle\int\limits_{0}^{T}}
			\left\{  h_{u}(s,x_{s}^{\ast},u_{s}^{\ast})+p_{s}b_{u}(s,x_{s}^{\ast}%
			,u_{s}^{\ast})+%
			{\displaystyle\int\limits_{\Gamma}}
			r_{s}(\theta)f_{u}(s,x_{s}^{\ast},\theta,u_{s}^{\ast})\upsilon(d\theta
			)\right\}  u_{s}ds\right. \\
			\left.  +%
			{\displaystyle\int\limits_{0}^{T}}
			\left\{  p_{t}\gamma_{u}(t,x_{t}^{\ast},u_{t}^{\ast})u_{t}+q_{t}\sigma_{x}%
			\eta_{t}\varphi_{t}^{\ast}+\eta_{t}\varphi_{s}^{\ast}S_{t}\right\}  d\langle
			B\rangle_{t}-%
			{\displaystyle\int\limits_{0}^{T}}
			2\eta_{t}\varphi_{s}^{\ast}G(S_{t})dt+%
			{\displaystyle\int\limits_{0}^{T}}
			\eta_{t}\varphi_{t}^{\ast}dk_{t}\right]  \geq0.
		\end{array}
		\label{dj2}%
	\end{equation}

	Finally, based on the remark $5.2$ in \cite{base} if we assume that in
	equation (\ref{dj2}) $k=0$ $q.s$ and we define the Hamiltonian $H$ from
	$\left[  0;T\right]  \times%
	\mathbb{R}
	^{n}\times A\times%
	\mathbb{R}
	^{n}\times%
	\mathbb{R}
	^{n\times m}\times L_{m}^{2}$ into $%
	\mathbb{R}
	$ by
	\begin{equation}
		\left.
		\begin{array}
			[c]{l}%
			H(t,x,u,p,q,r(.))=h(t,x_{t},u_{t})+pb(t,x_{t},u_{t})\\
			+q\sigma(t,x_{t})+%
			{\displaystyle\int\limits_{\Gamma}}
			r_{t}(\theta)f(s,x_{t},\theta,u_{t})\upsilon(d\theta).
		\end{array}
		\right.  \label{H}%
	\end{equation}

	and%
	\[
	F(t,x,u,p,q,r(.))=%
	{\displaystyle\int\limits_{0}^{T}}
	\left\{  p_{t}\gamma_{u}(t,x_{t}^{\ast},u_{t}^{\ast})u_{t}+q_{t}\sigma_{x}%
	\eta_{t}\varphi_{t}^{\ast}+\eta_{t}\varphi_{s}^{\ast}S_{t}\right\}  d\langle
	B\rangle_{t}-%
	{\displaystyle\int\limits_{0}^{T}}
	2\eta_{t}\varphi_{s}^{\ast}G(S_{t})dt
	\]

	we get from (\ref{dj2}) the next theorem, which is the result of this subsection.
	
	\begin{theorem}
		[maximum principle for strict control]Let $u^{\ast}$ be the optimal strict
		control minimizing the cost $J$ $(.)$ over $U$, and denote by $x^{\ast}$ the
		corresponding optimal trajectory. Then there exists a unique triple of square
		integrable adapted processes $(p,q,r)$ which is the unique solution of the
		backward G-SDE%
		\begin{equation}
			\left\{
			\begin{array}
				[c]{l}%
				\begin{array}
					[c]{c}%
					\left.  dp_{t}=-\left\{  h_{x}(t,x_{t}^{\ast},u_{t}^{\ast})+p_{t}b_{x}%
					(t,x_{t}^{\ast},u_{t}^{\ast})+%
					{\displaystyle\int\limits_{\Gamma}}
					r_{t}(\theta)f(t,x_{t^{-}}^{\ast},\theta,u_{t}^{\ast})\upsilon(d\theta
					)\right\}  dt\right. \\
					\left.  -\left\{  \gamma_{x}(t,x_{t}^{\ast},u_{t}^{\ast})p_{t}+q_{t}\sigma
					_{x}(t,x_{t}^{\ast})\right\}  d\langle B\rangle_{t}+q_{t}dB_{t}+%
					{\displaystyle\int\limits_{\Gamma}}
					r_{t}(\theta)\widetilde{N}(dt,d\theta)+dk_{t}\right.
				\end{array}
				\\
				p_{T}=g_{x}(x_{T}^{\ast}),\text{ }k_{0}=0
			\end{array}
			\right.
		\end{equation}
		such that, if we assume that $b=0$ and $h=0,$ then for all $\nu\in U$ the
		following inequality holds
		\[
		\widehat{\mathbb{E}}\left[  H(t,x_{t}^{\ast},\nu,p_{t})-H(t,x_{t}^{\ast}%
		,u_{t}^{\ast},p_{t})+F(t,x_{t}^{\ast},u_{t}^{\ast},p,q,r(.))\right]
		\geq0.dt-a.e.
		\]
		where the Hamiltonian $H$ is defined by (\ref{H}).
	\end{theorem}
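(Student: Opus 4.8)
The plan is to assemble the estimates already prepared in Section~4 and then read off the adjoint equation. First I fix $t_0\in[0,T)$ to be a common Lebesgue point at which \eqref{prop1}, \eqref{prop1'} and \eqref{prop2} hold, take an arbitrary $A$-valued $\mathcal{F}_{t_0}$-measurable square integrable $\nu$, and form the spike perturbation $u^h$ of $u^\ast$. Optimality of $u^\ast$ gives $J(u^\ast)\le J(u^h)$, hence $\left.\tfrac{dJ(u^h)}{dh}\right|_{h=0}\ge0$. Lemma~\ref{xhxetoile} provides the convergence $x^h\to x^\ast$ in $\widehat{\mathbb{E}}$ and Lemma~\ref{estimatelemma} identifies the limit of $(x_t^h-x_t^\ast)/h$ in $\widehat{\mathbb{E}}$ as the solution $z_t$ of the linear variational G-SDE \eqref{zt}. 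Substituting into the first-order expansion of $J$ yields the inequality \eqref{dJ} of Corollary~\ref{corDJ}, which bounds $\left.\tfrac{dJ(u^h)}{dh}\right|_{h=0}$ through $z_T$ and $z_t$.

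Next I would eliminate $z$ in favour of an adjoint process. Introduce the matrix fundamental solution $\varphi(t,\tau)$ of the homogeneous linearized equation together with its inverse $\psi(t,\tau)$; It\^o's formula gives $d\bigl(\varphi(t,\tau)\psi(t,\tau)\bigr)=0$, so $z_t=\varphi_t\eta_t$ with $\eta$ the explicit process written above, and \eqref{dJ} becomes \eqref{***}. I then set $X=\int_0^T h_x(t,x_t^\ast,u_t^\ast)\varphi_t^\ast\,dt+g_x(x_T^\ast)\varphi_T^\ast$ and apply Peng's G-martingale representation theorem to $\widehat{\mathbb{E}}[X\mid\mathcal{F}_t]$, producing $Q\in M_G^2(0,T)$, $S\in S(d)$, $R\in\mathcal{L}_G^2(0,T)$ and a continuous non-increasing G-martingale part $k$ with $k_0=0$; these determine $y_t$. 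Defining the adjoint processes $p_t=y_t\psi_t$ and $r_t(\theta)=R_t(\theta)\psi_t(f_x+I_d)^{-1}-p_t\bigl((f_x+I_d)^{-1}+I_d\bigr)$, I apply the integration-by-parts formula to $y_t\eta_t$, use the dynamics of $y,\eta,\psi$ and the orthogonality of the jump martingale, take $\widehat{\mathbb{E}}$, and substitute the resulting expression for $\widehat{\mathbb{E}}[y_T\eta_T]$ into \eqref{dJ+++} to reach \eqref{dj2}.

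Finally I read off the backward G-SDE: collecting the $dt$, $d\langle B\rangle$, $dB$ and $\widetilde N$ coefficients produced by $d(y_t\psi_t)$ shows that $(p,q,r)$ solves the stated adjoint equation with $p_T=g_x(x_T^\ast)$, $k_0=0$, uniqueness of the triple being the well-posedness of linear G-BSDEs with jumps. Specializing \eqref{dj2} to $b=0$, $h=0$ and recognizing the definitions \eqref{H} of $H$ and of $F$, the inequality turns into $\widehat{\mathbb{E}}[H(t,x_t^\ast,\nu,p_t)-H(t,x_t^\ast,u_t^\ast,p_t)+F(t,x_t^\ast,u_t^\ast,p,q,r(\cdot))]\ge0$; since $t_0$ ranges over a full-measure set and $\nu$ over all admissible values, this holds $dt$-a.e.

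The hard part will be the passage to the sublinear expectation. Every $L^2$ estimate has to be obtained first under an arbitrary $\mathbb{P}\in\mathcal{P}$---where \eqref{SDE} reduces to a classical Brownian/jump SDE---and then lifted to $\widehat{\mathbb{E}}$ via the weak compactness of $\mathcal{P}$, exactly as in the proof of Lemma~\ref{xhxetoile}. Moreover, the G-martingale representation forces the extra terms $-2\eta_t\varphi_t^\ast G(S_t)\,dt$ and $\eta_t\varphi_t^\ast\,dk_t$ to be dragged through the integration by parts; this is why $F$ and the non-increasing process $k$ appear in \eqref{dj2}, and why the clean Hamiltonian form of the maximum principle requires, in addition to $b=0$ and $h=0$, the normalization $k=0$.
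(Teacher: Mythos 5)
Your proposal follows essentially the same route as the paper: spike perturbation and optimality of $u^\ast$, the convergence Lemmas \ref{xhxetoile} and \ref{estimatelemma} leading to Corollary \ref{corDJ}, elimination of $z$ via the fundamental solution $\varphi$ and its inverse $\psi$, Peng's G-martingale representation of $\widehat{\mathbb{E}}[X\mid\mathcal{F}_t]$, integration by parts on $y_t\eta_t$ with $p_t=y_t\psi_t$ and the stated $r_t(\theta)$, substitution into \eqref{dJ+++} to reach \eqref{dj2}, and the specialization $b=0$, $h=0$, $k=0$ to obtain the Hamiltonian inequality. You also correctly identify the two delicate points the paper relies on, namely lifting the $\mathbb{P}$-wise estimates to $\widehat{\mathbb{E}}$ by weak compactness of $\mathcal{P}$ and the extra terms $-2\eta_t\varphi_t^\ast G(S_t)\,dt$ and $\eta_t\varphi_t^\ast\,dk_t$ that force the presence of $F$ and the normalization of $k$, so the reconstruction matches the paper's argument.
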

	
	\subsection{The maximum principle for near optimal controls}
	
	In this subsection, we establish necessary conditions of near optimality
	satisfied by a sequence of nearly optimal strict controls. This result is
	based on Ekeland's variational principle, which is given by the following lemma
	
	\begin{lemma}
		\label{Eke} [Ekeland's variational principle]\label{Ekeland}Let $(E,d)$ be a
		complete metric space and $f:E\rightarrow\overline{%
			\mathbb{R}
		}$ be lower semicontinuous and bounded from below. Given $\varepsilon>0$,
		suppose $u^{\varepsilon}\in E$ satisfies $f(u^{\varepsilon})$ $\leq
		\inf(f)+\varepsilon.$ Then for any $\lambda>0,$ there exists $\nu\in E$ such that
		
		\begin{itemize}
			\item $f(\nu)$ $\leq f(u^{\varepsilon})$
			
			\item $d(u^{\varepsilon},\nu)\leq\lambda$
			
			\item $f(\nu)\leq f(\omega)+\frac{\varepsilon}{\lambda}d(\omega,\nu)$ for all
			$\omega\neq\nu.$
		\end{itemize}
	\end{lemma}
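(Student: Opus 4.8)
The plan is to give the classical iterative proof of Ekeland's principle, so that the lemma is self-contained; the only ingredients used are the completeness of $(E,d)$ together with the lower semicontinuity and lower boundedness of $f$. Introduce on $E$ the relation defined by
\[
v \preceq w \quad \Longleftrightarrow \quad f(v) + \tfrac{\varepsilon}{\lambda}\, d(v,w) \le f(w).
\]
First I would check that $\preceq$ is a partial order on $E$. Reflexivity is immediate since $d(v,v)=0$. For antisymmetry, adding the two inequalities coming from $v\preceq w$ and $w\preceq v$ yields $\tfrac{2\varepsilon}{\lambda}d(v,w)\le 0$, hence $d(v,w)=0$ and $v=w$. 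For transitivity, adding the inequalities for $u\preceq v$ and $v\preceq w$ and using the triangle inequality $d(u,w)\le d(u,v)+d(v,w)$ gives $f(u)+\tfrac{\varepsilon}{\lambda}d(u,w)\le f(w)$, i.e. $u\preceq w$.

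Next I would construct a sequence $(u_n)_{n\ge 0}$ with $u_0:=u^{\varepsilon}$ as follows. Given $u_n$, set
\[
S_n := \{\, w\in E : w \preceq u_n \,\},
\]
which is nonempty (it contains $u_n$) and closed, because $w\mapsto f(w)+\tfrac{\varepsilon}{\lambda}d(w,u_n)$ is lower semicontinuous. Choose $u_{n+1}\in S_n$ with $f(u_{n+1})\le \inf_{S_n} f + 2^{-(n+1)}$. Transitivity of $\preceq$ gives $S_{n+1}\subseteq S_n$, so $(f(u_n))_n$ is nonincreasing and, being bounded below, convergent. Moreover, for $w\in S_{n+1}$ the relation $w\preceq u_{n+1}$ yields
\[
\tfrac{\varepsilon}{\lambda}\,d(w,u_{n+1}) \le f(u_{n+1})-f(w) \le f(u_{n+1})-\inf_{S_n} f \le 2^{-(n+1)},
\]
so the diameters of the sets $S_n$ tend to $0$. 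By the nested-set (Cantor intersection) theorem in the complete space $E$, the sets $S_n$ have exactly one common point $\nu$, and $\nu\preceq u_n$ for every $n$; in particular $\nu\preceq u_0=u^{\varepsilon}$.

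Finally I would read off the three conclusions. From $\nu\preceq u^{\varepsilon}$ we get $f(\nu)\le f(\nu)+\tfrac{\varepsilon}{\lambda}d(\nu,u^{\varepsilon})\le f(u^{\varepsilon})$, which is the first item, and also $\tfrac{\varepsilon}{\lambda}d(\nu,u^{\varepsilon})\le f(u^{\varepsilon})-f(\nu)\le f(u^{\varepsilon})-\inf f\le \varepsilon$, hence $d(\nu,u^{\varepsilon})\le\lambda$, the second item. For the third, let $\omega\neq\nu$; if $\omega\preceq\nu$ held, then transitivity with $\nu\preceq u_n$ would force $\omega\in\bigcap_n S_n=\{\nu\}$, a contradiction, so $\omega\not\preceq\nu$, i.e. $f(\nu)<f(\omega)+\tfrac{\varepsilon}{\lambda}d(\omega,\nu)$, which in particular gives the stated inequality. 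The step I expect to require the most care is the diameter estimate together with the fact that $\bigcap_n S_n$ reduces to a single point: this is precisely what promotes the merely near-minimizing choices of $u_{n+1}$ to the strict minimality of $\nu$ with respect to $\preceq$. The accompanying bookkeeping on the partial order (closedness of the $S_n$, transitivity, nestedness) is routine but must be carried out carefully.
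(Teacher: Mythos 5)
Your proof is correct: it is the standard iterative proof of Ekeland's variational principle via the partial order $v\preceq w\Leftrightarrow f(v)+\frac{\varepsilon}{\lambda}d(v,w)\le f(w)$, the nested closed sets $S_n$ of $\preceq$-predecessors of near-minimizers, and the Cantor intersection theorem. The paper itself offers no proof of this lemma at all; it is quoted as a classical tool (usually attributed to Ekeland, 1974) and used only as a black box to produce the nearly optimal controls $u^{n}$ satisfying the perturbed minimality inequality. So there is no ``paper approach'' to compare against; what your argument buys is self-containedness, and in fact you establish the slightly stronger third conclusion with strict inequality ($f(\nu)<f(\omega)+\frac{\varepsilon}{\lambda}d(\omega,\nu)$ for $\omega\neq\nu$), which of course implies the stated one. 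Two minor points worth tidying: since $f$ is $\overline{\mathbb{R}}$-valued, the additions used for antisymmetry and transitivity implicitly require finiteness of $f$ at the points involved, which holds on every $S_n$ because $f(u^{\varepsilon})\le\inf f+\varepsilon<\infty$ and $f$ is bounded below (but you should say so); and the diameter bound should be recorded as $d(w,u_{n+1})\le\frac{\lambda}{\varepsilon}2^{-(n+1)}$, i.e.\ the factor $\lambda/\varepsilon$ matters for the constant but not for the conclusion that $\operatorname{diam}S_n\to0$.
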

	
	To apply Ekeland's variational principle, we have to endow the set $U$ of
	strict controls with an appropriate metric. For any $u$ and $\nu\in U,$ we set%
	\[
	d(u,\nu)=\mathbb{P}\otimes dt\left\{  (\omega,t)\in\Omega\times\left[
	0;T\right]  ;u(t,\omega)\neq\nu(t,\omega)\right\}
	\]

	where $\mathbb{P}\otimes dt$ is the product measure of $\mathbb{P}$ with the
	Lebesgue measure $dt.$
	
	\begin{remark}
		It is easy to see that $(U,d)$ is a complete metric space, and it well known
		that the cost functional $J$ is continuous from $U$ into $%
		\mathbb{R}
		$. For more detail see \cite{Mez}.
	\end{remark}
	
	Now, let $\mu^{\ast}\in\mathcal{R}$ be an optimal relaxed control and denote
	by $x^{\mu^{\ast}}$ the trajectory of the system controlled by $\mu^{\ast}.$
	From Lemma (\ref{chattlemma}), there exists a sequence $(u^{n})$ of strict
	controls such that
	\[
	\mu_{t}^{n}(da)dt=\delta_{u_{t}^{n}}(da)dt\longrightarrow\mu_{t}^{\ast
	}(da)dt\text{ \ \ \ quasi-surely}%
	\]

	and for every $\mathbb{P}\in\mathcal{P}$
	\[
	\lim_{n\rightarrow\infty}\mathbb{E}^{\mathbb{P}}\left[  \left\vert x_{t}%
	^{n}-x_{t}^{\mu^{\ast}}\right\vert ^{2}\right]  =0
	\]

	where $x^{n}$ is the solution of (\ref{relaxedSDE}) corresponding to $\mu
	^{n}.$
	
	According to the optimality of $\mu^{\ast}$ and lemma (\ref{Ekeland}), there
	exists a sequence $(\varepsilon_{n})$ of positive numbers with $\lim
	_{n\rightarrow\infty}\varepsilon_{n}=0$ such that
	\[
	J(u^{n})=J(\mu^{n})\leq J(\mu^{\ast})+\varepsilon_{n}=\inf_{u\in
		U}J(u)+\varepsilon_{n}%
	\]

	a suitable version of Lemma (\ref{Ekeland}) implies that, given any
	$\varepsilon_{n}>0,$ there exists $u^{n}\in U$ such that
	\begin{equation}
		J(u^{n})\leq J(u)+\varepsilon_{n}d(u^{n},u)\text{, }\forall u\in U
		\label{Jepsilon}%
	\end{equation}

	Let us define the perturbation
	\[
	u^{n,h}=\left\{
	\begin{array}
		[c]{l}%
		\nu\text{ \ \ \ if \ }t\in\left[  t_{0};t_{0}+h\right] \\
		u^{n}\text{ \ \ \ \ \ \ otherwise }%
	\end{array}
	\right.
	\]

	From (\ref{Jepsilon}) we have
	\[
	0\leq J(u^{n,h})-J(u^{n})+\varepsilon_{n}d(u^{n,h},u^{n})
	\]

	Using the definition of $d$ it holds that
	\begin{equation}
		0\leq J(u^{n,h})-J(u^{n})+\varepsilon_{n}Ch \label{inequality}%
	\end{equation}

	where $C$ is a positive constant.
	
	Now, we can introduce the next theorem which is the main result of this section.
	
	\begin{theorem}
		For each $\varepsilon_{n}>0,$ there exists $(u^{n})\in U$ such that there
		exists a unique triple of square integrable adapted processes $(p^{n}%
		,q^{n},r^{n})$ which is the solution of the backward SDE%
		\begin{equation}
			\left\{
			\begin{array}
				[c]{l}%
				\left.
				\begin{array}
					[c]{l}%
					dp_{t}^{n}=-\left\{  h_{x}(t,x_{t}^{n},u_{t}^{n})+p_{t}^{n}b_{x}(t,x_{t}%
					^{n},u_{t}^{n})+%
					{\displaystyle\int\limits_{\Gamma}}
					r_{t}^{n}(\theta)f(t,x_{t^{-}}^{n},\theta,u_{t}^{n})\upsilon(d\theta)\right\}
					dt\\
					\left.  -\left\{  \gamma_{x}(t,x_{t}^{n},u_{t}^{n})p_{t}+q_{t}^{n}\sigma
					_{x}(t,x_{t}^{n})\right\}  d\langle B\rangle_{t}+q_{t}^{n}dB_{t}+%
					{\displaystyle\int\limits_{\Gamma}}
					r_{t}^{n}(\theta)\widetilde{N}(dt,d\theta)+dk_{t}^{n}\right.
				\end{array}
				\right. \\
				p_{T}^{n}=g_{x}(x_{T}^{n}),\text{ }k_{0}^{n}=0
			\end{array}
			\right.  \label{pnn}%
		\end{equation}
		such that, if we assume that in equation (\ref{pnn}) $\forall n,$
		$h_{x}(t,x_{t}^{n},u_{t}^{n})=0,b_{x}(t,x_{t}^{n},u_{t}^{n})=0,$ then for all
		$\nu\in U$%
		\begin{equation}
			\left.
			\begin{array}
				[c]{l}%
				\widehat{\mathbb{E}}\left[  H(t,x_{t}^{n},\nu,p_{t}^{n})-H(t,x_{t}^{n}%
				,u_{t}^{n},p_{t}^{n})+G^{n}(t,x_{t}^{\ast},u_{t}^{\ast},p,q,r(.))\right] \\
				+C\varepsilon_{n}\geq0.
			\end{array}
			\right.  dt-a.e. \label{main2}%
		\end{equation}
		where $C$ is a positive constant.
	\end{theorem}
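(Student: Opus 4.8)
The plan is to reproduce the strict-control argument of Subsection~4.1, but starting from a near-optimal strict control produced by combining the $G$-chattering lemma with Ekeland's variational principle, and then to carry the resulting $\varepsilon_{n}$-slack through the duality computation.

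First I would fix an optimal relaxed control $\mu^{\ast}\in\mathcal{R}$ with trajectory $x^{\mu^{\ast}}$ — it exists by Theorem~\ref{stability} — and apply the $G$-chattering Lemma~\ref{chattlemma} to produce strict controls $u^{n}$ with $\delta_{u_{t}^{n}}(da)\,dt\to\mu_{t}^{\ast}(da)\,dt$ quasi-surely. By Theorem~\ref{stability}, the corresponding trajectories satisfy $\widehat{\mathbb{E}}[\sup_{t}|x_{t}^{n}-x_{t}^{\mu^{\ast}}|^{2}]\to0$ and $J(u^{n})=J(\mu^{n})\to J(\mu^{\ast})=\inf_{u\in U}J(u)$, so there is a sequence $\varepsilon_{n}\downarrow0$ with $J(u^{n})\le\inf_{u\in U}J(u)+\varepsilon_{n}$. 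Since $(U,d)$ is a complete metric space on which $J$ is continuous, Lemma~\ref{Eke} yields, after relabelling, a control $u^{n}\in U$ obeying the near-optimality inequality \eqref{Jepsilon}, from which \eqref{inequality} follows by the spike perturbation.

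Next I would introduce the spike perturbation $u^{n,h}$ of $u^{n}$ on $[t_{0},t_{0}+h]$, with $t_{0}$ a Lebesgue point of the integrands as in Subsection~4.1, and rerun the analysis behind Lemma~\ref{xhxetoile}, Lemma~\ref{estimatelemma} and Corollary~\ref{corDJ} verbatim, with $(u^{\ast},x^{\ast})$ replaced by $(u^{n},x^{n})$ and $z$ replaced by the solution $z^{n}$ of the linear $G$-SDE \eqref{zt} around $(x^{n},u^{n})$. Dividing \eqref{inequality} by $h$ and letting $h\to0$ gives the near-optimal analogue of \eqref{dJ}, namely $0\le\widehat{\mathbb{E}}[g_{x}(x_{T}^{n})z_{T}^{n}+\int_{0}^{T}h_{x}(t,x_{t}^{n},u_{t}^{n})z_{t}^{n}\,dt]+C\varepsilon_{n}$. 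Then, introducing the fundamental matrix $\varphi^{n}$, its inverse $\psi^{n}$, the factorisation $z^{n}=\varphi^{n}\eta^{n}$ and finally the adjoint triple $(p^{n},q^{n},r^{n})$ with $p^{n}=y^{n}\psi^{n}$ — whose existence and uniqueness as a square-integrable adapted solution of \eqref{pnn} comes, exactly as in the strict case, from the It\^o representation of $G$-martingales — the same integration-by-parts computation that produced \eqref{dj2} converts the last inequality into
\[ \widehat{\mathbb{E}}\left[H(t,x_{t}^{n},\nu,p_{t}^{n})-H(t,x_{t}^{n},u_{t}^{n},p_{t}^{n})+G^{n}(t,x_{t}^{\ast},u_{t}^{\ast},p,q,r(\cdot))\right]+C\varepsilon_{n}\ge0, \]
$dt$-a.e.\ for every $\nu\in U$, under the reductions $h_{x}(\cdot,x^{n},u^{n})=0$ and $b_{x}(\cdot,x^{n},u^{n})=0$, with $H$ given by \eqref{H} and $G^{n}$ the $n$-indexed counterpart of $F$.

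The step I expect to be the real obstacle is making this chain \emph{uniform in $n$}: the constant $C$ multiplying $\varepsilon_{n}$ must not depend on $n$, which forces $n$-independent bounds on $\widehat{\mathbb{E}}[\sup_{t}|x_{t}^{n}|^{2}]$, on the variational processes $z^{n},\varphi^{n},\psi^{n},\eta^{n}$ and on the adjoint processes $(p^{n},q^{n},r^{n})$. These follow from assumption (A) and the boundedness of the coefficients and of their $x$-derivatives, but must be propagated through the sublinear expectation $\widehat{\mathbb{E}}$ and through the quasi-sure (outside-a-polar-set) convergence supplied by Lemma~\ref{chattlemma} and the martingale-measure convergence propositions of Section~2; in particular, the reduction of the per-$\mathbb{P}$, $dt$-a.e.\ estimates to a single statement under $\widehat{\mathbb{E}}$ uses the weak compactness of $\mathcal{P}$, exactly as in the proof of Lemma~\ref{xhxetoile}. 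A secondary subtlety is the legitimacy of interchanging $\lim_{h\to0}$ with $\sup_{\mathbb{P}\in\mathcal{P}}$, handled in the same way.
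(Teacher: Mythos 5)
Your proposal takes essentially the same route as the paper: the paper's own proof of this theorem is the single remark that inequality (\ref{inequality}) combined with ``the same method as in the previous subsection'' yields (\ref{main2}), and your expansion --- chattering lemma plus Ekeland to get the near-optimal $u^{n}$, the spike perturbation, and the re-run of Lemmas (\ref{xhxetoile}), (\ref{estimatelemma}) and Corollary (\ref{corDJ}) around $(u^{n},x^{n})$ with the extra $\varepsilon_{n}Ch$ slack carried through the adjoint/integration-by-parts computation --- is precisely what that remark abbreviates. Your closing observation that the constant $C$ must be uniform in $n$ is a legitimate point the paper leaves implicit, but it does not alter the argument's structure.
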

	
	\begin{proof}
		From the inequality (\ref{inequality}), we use the same method as in the
		previous subsection, we obtain (\ref{main2}).
	\end{proof}
	
	\subsection{The relaxed stochastic maximum principle}
	
	Now, we can introduce the next theorem, which is the main result of this section
	
	\begin{theorem}
		\label{relaxedMP} [The relaxed stochastic maximum principle]\label{thmrelaxed}%
		Let \ $\mu^{\ast}$ be an optimal relaxed control minimizing the functional $J$
		over $\mathcal{R}$, and let $x_{t}^{\mu^{\ast}}$be the corresponding optimal
		trajectory. Then there exists a unique triple of square integrable and adapted
		processes $(p^{\ \mu^{\ast}},q^{\ \mu^{\ast}},r^{\ \mu^{\ast}})$ which is the
		solution of the backward SDE
		\begin{equation}
			\left\{
			\begin{array}
				[c]{l}%
				\left.
				\begin{array}
					[c]{l}%
					dp_{t}^{\mu^{\ast}}=-\left\{
					{\displaystyle\int\limits_{A}}
					h_{x}(t,x_{t}^{\mu^{\ast}},a)\mu_{t}^{\ast}(da)+%
					{\displaystyle\int\limits_{A}}
					p_{t}^{n}b_{x}(t,x_{t}^{\mu^{\ast}},a)\mu_{t}^{\ast}(da)\right. \\
					\left.  +%
					{\displaystyle\int\limits_{A}}
					{\displaystyle\int\limits_{\Gamma}}
					r_{t}^{\mu^{\ast}}(\theta)f(t,x_{t^{-}}^{\mu^{\ast}},\theta,a)\mu_{t}^{\ast
					}\otimes\upsilon(da,d\theta)\right\}  dt\\
					\left.  -\left\{  \gamma_{x}(t,x_{t}^{\mu^{\ast}},a)p_{t}^{\mu^{\ast}}\mu
					_{t}^{\ast}(da)+q_{t}^{\mu^{\ast}}\sigma_{x}(t,x_{t}^{\mu^{\ast}})\right\}
					d\langle B\rangle_{t}+q_{t}^{\mu^{\ast}}dB_{t}\right. \\
					\left.  +%
					{\displaystyle\int\limits_{A}}
					{\displaystyle\int\limits_{\Gamma}}
					r_{t}^{\mu^{\ast}}(\theta)\widetilde{N}^{\mu^{\ast}}(dt,d\theta,da)+dk_{t}%
					^{\mu^{\ast}}\right.
				\end{array}
				\right. \\
				p_{T}^{\mu^{\ast}}=g_{x}(x_{T}^{\mu^{\ast}}),\text{ }k_{0}^{\mu^{\ast}}=0
			\end{array}
			\right.  \label{pmu}%
		\end{equation}
		such that if we assume that $b=0$ and $h=0$, then for all $\nu\in U$%
		\begin{align}
			0  &  \leq\widehat{\mathbb{E}}\left[  H(t,x_{t}^{\mu^{\ast}},\nu_{t}%
			,p_{t}^{\mu^{\ast}},q^{\mu^{\ast}},r_{t}^{\mu^{\ast}}(.))-%
			{\displaystyle\int\limits_{\Gamma}}
			H(t,x_{t}^{\mu^{\ast}},a,p_{t}^{\mu^{\ast}},q^{\mu^{\ast}},r_{t}^{\mu^{\ast}%
			}(.))\mu_{t}^{\ast}(da)\right. \label{main}\\
			&  \left.  +G^{\mu^{\ast}}(t,x_{t}^{\ast},u_{t}^{\ast},p,q,r(.))\right]
			\text{ \ \ \ \ \ }dt-a.e\nonumber
		\end{align}
		
	\end{theorem}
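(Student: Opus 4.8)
The plan is to obtain the relaxed maximum principle by passing to the limit in the near-optimal maximum principle established in the previous subsection. Since $\mu^{\ast}$ is an optimal relaxed control, the $G$-chattering Lemma \ref{chattlemma} furnishes a sequence $(u^{n})$ of strict controls with $\delta_{u_{t}^{n}}(da)\,dt\longrightarrow\mu_{t}^{\ast}(da)\,dt$ quasi-surely, and Theorem \ref{stability} (together with equations (\ref{diff})--(\ref{equa2})) gives $\widehat{\mathbb{E}}\big[\sup_{0\le t\le T}|x_{t}^{n}-x_{t}^{\mu^{\ast}}|^{2}\big]\to 0$ and $J(u^{n})\to J(\mu^{\ast})=\inf_{u\in U}J(u)$. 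Thus $(u^{n})$ is a minimizing sequence, so $J(u^{n})\le\inf_{u\in U}J(u)+\varepsilon_{n}$ for some $\varepsilon_{n}\downarrow 0$, and Ekeland's principle (Lemma \ref{Ekeland}), applied exactly as in the preceding subsection with the metric $d$, produces (after relabeling) strict controls satisfying $J(u^{n})\le J(u)+\varepsilon_{n}d(u^{n},u)$ for all $u\in U$. Applying the near-optimal maximum principle (the theorem containing (\ref{pnn})--(\ref{main2})) to each $u^{n}$ yields adjoint triples $(p^{n},q^{n},r^{n})$ solving the backward $G$-SDE (\ref{pnn}) and satisfying (\ref{main2}) with the additional error $C\varepsilon_{n}$.

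Next I would prove stability of the adjoint processes with respect to the control. Writing the solution of (\ref{pnn}) in the factorized form $p_{t}^{n}=y_{t}^{n}\psi_{t}^{n}$ as in the strict-control subsection, and using that $b_{x},\sigma_{x},\gamma_{x},f_{x},g_{x},h_{x}$ are bounded and continuous by assumption (A), one shows by standard $G$-BSDE estimates (It\^{o} isometry and Gronwall under each $\mathbb{P}\in\mathcal{P}$, then passing to the sublinear expectation by the weak-compactness argument of Lemma \ref{xhxetoile}) that $\widehat{\mathbb{E}}\big[\sup_{t}|p_{t}^{n}-p_{t}^{\mu^{\ast}}|^{2}\big]+\widehat{\mathbb{E}}\big[\int_{0}^{T}|q_{t}^{n}-q_{t}^{\mu^{\ast}}|^{2}d\langle B\rangle_{t}\big]+\widehat{\mathbb{E}}\big[\int_{0}^{T}\int_{\Gamma}|r_{t}^{n}(\theta)-r_{t}^{\mu^{\ast}}(\theta)|^{2}\upsilon(d\theta)\,dt\big]\longrightarrow 0$, where $(p^{\mu^{\ast}},q^{\mu^{\ast}},r^{\mu^{\ast}})$ is the unique solution of the relaxed adjoint $G$-BSDE (\ref{pmu}); the convergence of the driving data uses $x^{n}\to x^{\mu^{\ast}}$, continuity of the coefficients in $(x,a)$, the stable convergence $\delta_{u_{t}^{n}}(da)\,dt\to\mu_{t}^{\ast}(da)\,dt$, and the quasi-sure convergence of the relaxed Poisson martingale measures $\widetilde{N}^{n}\to\widetilde{N}^{\mu^{\ast}}$ from the proposition on convergence of orthogonal martingale measures. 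Existence and uniqueness of $(p^{\mu^{\ast}},q^{\mu^{\ast}},r^{\mu^{\ast}})$, together with the representation via $G$-martingale representation used to split $q^{\mu^{\ast}}$ and $r^{\mu^{\ast}}$, follow as in the strict case since all coefficients are Lipschitz in the unknowns.

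Finally I would pass to the limit in (\ref{main2}). Under the standing simplification $b=0$, $h=0$ (so $b_{x}=h_{x}=0$ and the hypothesis on (\ref{pnn}) holds automatically), for fixed $\nu\in U$ the continuity of $H$ in $x$ and the convergences above give $\widehat{\mathbb{E}}\big[H(t,x_{t}^{n},\nu,p_{t}^{n},q^{n},r_{t}^{n}(\cdot))\big]\to\widehat{\mathbb{E}}\big[H(t,x_{t}^{\mu^{\ast}},\nu,p_{t}^{\mu^{\ast}},q^{\mu^{\ast}},r_{t}^{\mu^{\ast}}(\cdot))\big]$ $dt$-a.e., while since $a\mapsto H(t,x,a,p,q,r(\cdot))$ is continuous and bounded (assumption (A2)), the stable convergence yields $\widehat{\mathbb{E}}\big[H(t,x_{t}^{n},u_{t}^{n},p_{t}^{n},q^{n},r_{t}^{n}(\cdot))\big]\to\widehat{\mathbb{E}}\big[\int_{\Gamma}H(t,x_{t}^{\mu^{\ast}},a,p_{t}^{\mu^{\ast}},q^{\mu^{\ast}},r_{t}^{\mu^{\ast}}(\cdot))\mu_{t}^{\ast}(da)\big]$; the remaining term converges to $G^{\mu^{\ast}}(t,x_{t}^{\ast},u_{t}^{\ast},p,q,r(\cdot))$ by the same stability and $C\varepsilon_{n}\to 0$, giving (\ref{main}). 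I expect the principal obstacle to be the stability of the adjoint $G$-BSDE (\ref{pnn}) under perturbation of the control: one must control the $q^{n}$ and $r^{n}$ components simultaneously in the sublinear-expectation norm, handle the extra finite-variation term $dk_{t}^{n}$ intrinsic to $G$-BSDEs (which is precisely why the statement, following Remark 5.2 of \cite{base}, assumes it vanishes), and identify the limit of $\int_{A}\int_{\Gamma}r_{t}^{n}(\theta)f(\cdot,a)\,\widetilde{N}^{n}(dt,d\theta,da)$ via the quasi-sure convergence of the relaxed martingale measures rather than an ordinary dominated-convergence argument.
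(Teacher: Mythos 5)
Your proposal is correct and follows essentially the same route as the paper: chattering approximation of $\mu^{\ast}$ by strict controls, Ekeland's variational principle to invoke the near-optimal maximum principle (\ref{main2}), a stability result for the adjoint $G$-BSDEs with jumps (the paper's Theorem \ref{stabilityBSDE}, proved via Lipschitz estimates on the driver and the stability theorem of Hu--Lin--Hima plus aggregation, rather than your factorization $p=y\psi$, but to the same effect), and finally passage to the limit in (\ref{main2}) using stable convergence and continuity of $H$. You also correctly single out the simultaneous control of the $q^{n}$, $r^{n}$ and $k^{n}$ components and the quasi-sure convergence of the relaxed martingale measures as the genuine technical crux, which is exactly where the paper concentrates its effort.
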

	
	The proof of this theorem is based on the following stability result of
	G-BSDEs with jumps. Note that this theorem is proved in the classical problems
	by Hu and Peng \cite{Hu}, and by H.Ben Gherbal and B.Mezerdi \cite{HBG} in the
	case with jump.
	
	\subsubsection{Stability theorem for G-BSDE's with jump}
	
	Let us denote by $M_{G}^{2}\left(  0,T\right)  $ the subset of $\mathcal{L}%
	_{G}^{2}\left(  0,T\right)  $ consisting of $\mathcal{F}_{t}-$progressively
	measurable processes. Consider the following G-BSDE with jump depending on a
	parameter $n$. Using the fact that under $\mathbb{P}\in\mathcal{P}$,
	$\widetilde{N}$ is a martingale and $B$ is a continuous martingale whose
	quadratic variation process $\langle B\rangle$ is such that $\pi_{t}%
	=\frac{d\langle B\rangle_{t}}{dt}$ is bounded by a deterministic $d\times d$
	symmetric positive definite matrix $\overline{\sigma},$and $p_{t}^{n}$ satisfy%
	\begin{equation}
		\left\{
		\begin{array}
			[c]{l}%
			\left.
			\begin{array}
				[c]{l}%
				dp_{t}^{n}=-\left\{  h_{x}(t,x_{t}^{n},u_{t}^{n})+p_{t}^{n}\left(
				b_{x}(t,x_{t}^{n},u_{t}^{n})-\pi_{t}\gamma_{x}(t,x_{t}^{\ast},u_{t}^{\ast
				})\right)  -\pi_{t}q_{t}^{n}\sigma_{x}(t,x_{t}^{n})\right. \\
				\left.  +%
				{\displaystyle\int\limits_{\Gamma}}
				r_{t}^{n}(\theta)f(t,x_{t^{-}}^{n},\theta,u_{t}^{n})\upsilon(d\theta)\right\}
				dt+\left.  q_{t}^{n}dB_{t}+%
				{\displaystyle\int\limits_{\Gamma}}
				r_{t}^{n}(\theta)\widetilde{N}(dt,d\theta)+dk_{t}^{n}\right.
			\end{array}
			\right. \\
			p_{T}^{n}=g_{x}(x_{T}^{n});\text{ }k_{0}^{n}=0.
		\end{array}
		\right.  \label{G-stability}%
	\end{equation}

	Then we have
	\[
	p_{t}^{n}=p_{T}^{n}+%
	{\displaystyle\int\limits_{t}^{T}}
	F^{n}(s,p_{s}^{n},q_{s}^{n},r_{s}^{n})ds-%
	{\displaystyle\int\limits_{t}^{T}}
	q_{s}^{n}dB_{s}-%
	{\displaystyle\int\limits_{t}^{T}}
	{\displaystyle\int\limits_{\Gamma}}
	r_{s}^{n}(\theta)N^{n}(ds,d\theta)-K_{T}^{n}+K_{t}^{n}\text{ \ \ \ }%
	t\in\left[  0;T\right]  .
	\]

	with
	\begin{align*}
		&  \left.  F^{n}(s,p_{s}^{n},q_{s}^{n},r_{s}^{n})=-h_{x}(t,x_{t}^{n},u_{t}%
		^{n})+p_{t}^{n}\left(  b_{x}(t,x_{t}^{n},u_{t}^{n})-\pi_{t}\gamma_{x}%
		(t,x_{t}^{\ast},u_{t}^{\ast})\right)  \right. \\
		&  \left.  -\pi_{t}q_{t}^{n}\sigma_{x}(t,x_{t}^{n})+%
		{\displaystyle\int\limits_{\Gamma}}
		r_{t}^{n}(\theta)f(t,x_{t^{-}}^{n},\theta,u_{t}^{n})\upsilon(d\theta)\right.
		.
	\end{align*}

	Using the linearity of the adjoint equation, it is not difficult to check that
	the following assumptions are verified :
	
	\begin{enumerate}
		\item For any $n$, $(p,q,r)\in%
		\mathbb{R}
		^{m}\times%
		\mathbb{R}
		^{m\times d}\times%
		\mathbb{R}
		,$ $F^{n}(.,p,q,r)\in M_{G}^{2}\left(  0,T\right)  $ and $p_{T}^{n}%
		\in\mathcal{L}_{G}^{2}\left(  0,T\right)  .$
		
		\item There exists a constant $C_{0}>0$ such that
		\begin{align*}
			&  \left\vert F^{n}(s,p_{1},q_{1},r_{1})-F^{n}(s,p_{2},q_{2},r_{2})\right\vert
			\\
			&  \leq C_{0}\left(  \left\vert p_{1}-p_{2}\right\vert +\left\vert q_{2}%
			-q_{2}\right\vert +%
			{\displaystyle\int\limits_{\Gamma}}
			\left\vert r_{1}-r_{2}\right\vert \upsilon(d\theta)\right)  \text{
				\ \ }P.a.s\text{ \ }a.e\text{ \ }t\in\left[  0;T\right]  ,
		\end{align*}

		\item $E\left(  \left\vert p_{T}^{n}-p_{T}^{\ast}\right\vert ^{2}\right)
		\overrightarrow{n\rightarrow\infty}$ $0,$
		
		\item $\forall t\in\left[  0;T\right]  ,$%
		\[
		\lim_{n\rightarrow\infty}\mathbb{E}^{\mathbb{P}}\left[  \left\vert
		{\displaystyle\int\limits_{t}^{T}}
		\left(  F^{n}(s,p_{s}^{\ast},q_{s}^{\ast},r_{s}^{\ast})-F^{\ast}(s,p_{s}%
		^{\ast},q_{s}^{\ast},r_{s}^{\ast})\right)  ds\right\vert ^{2}\right]  =0
		\]
		
	\end{enumerate}
	
	\begin{theorem}
		[Stability theorem for G-BSDE's with jumps]\label{stabilityBSDE}Let
		$(p^{\ n},q^{\ n},r^{\ n})$ and $(p^{\ \ast},q^{\ \ast},r^{\ \ast}),$ be the
		solutions of (\ref{pnn}) and (\ref{pmu}), respectively. We have
		\[
		\lim_{n\rightarrow\infty}\widehat{\mathbb{E}}\left[  \left\vert p^{n}-p^{\ast
		}\right\vert ^{2}+%
		{\displaystyle\int\limits_{t}^{T}}
		\left\vert q^{n}-q^{\ast}\right\vert ^{2}ds+%
		{\displaystyle\int\limits_{t}^{T}}
		{\displaystyle\int\limits_{\Gamma}}
		\left\vert r^{n}-r^{\ast}\right\vert ^{2}\upsilon(d\theta)ds+\left\vert
		k^{n}-k^{\ast}\right\vert ^{2}\right]  =0.
		\]
		
	\end{theorem}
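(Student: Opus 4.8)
The plan is to run the standard $L^{2}$-energy estimate for the difference of the two adjoint $G$-BSDEs, using the four structural properties of the drivers $F^{n}$ listed above, the a priori estimates for $G$-BSDEs with jumps, and Gronwall's inequality, and to upgrade the $\mathbb{P}$-wise convergences in hypotheses $(3)$--$(4)$ to statements under $\widehat{\mathbb{E}}$ by exactly the weak-compactness argument used in the proof of Lemma \ref{xhxetoile}. Throughout I write $\Delta p=p^{n}-p^{\ast}$, $\Delta q=q^{n}-q^{\ast}$, $\Delta r=r^{n}-r^{\ast}$, $\Delta k=k^{n}-k^{\ast}$.

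First I would reduce the hypotheses to the sublinear level. Hypothesis $(3)$ is in fact a consequence of Theorem \ref{stability} and the Lipschitz continuity of $g_{x}$, since $\widehat{\mathbb{E}}[|p_{T}^{n}-p_{T}^{\ast}|^{2}]=\widehat{\mathbb{E}}[|g_{x}(x_{T}^{n})-g_{x}(x_{T}^{\mu^{\ast}})|^{2}]\le C\,\widehat{\mathbb{E}}[\sup_{t}|x_{t}^{n}-x_{t}^{\mu^{\ast}}|^{2}]\to 0$. For hypothesis $(4)$, stated for each fixed $\mathbb{P}\in\mathcal{P}$, I would argue by contradiction exactly as in Lemma \ref{xhxetoile}: if $\widehat{\mathbb{E}}[|\int_{t}^{T}(F^{n}(s,p_{s}^{\ast},q_{s}^{\ast},r_{s}^{\ast})-F^{\ast}(s,p_{s}^{\ast},q_{s}^{\ast},r_{s}^{\ast}))\,ds|^{2}]$ did not tend to $0$, one picks $\mathbb{P}_{n}\in\mathcal{P}$ nearly attaining the supremum, extracts by weak compactness of $\mathcal{P}$ a subsequence $\mathbb{P}_{n_{k}}\to\mathbb{P}$, and contradicts $(4)$ for that $\mathbb{P}$; the same device applies to any $\mathbb{P}$-wise limit appearing later. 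Hence one obtains $\widehat{\mathbb{E}}[|p_{T}^{n}-p_{T}^{\ast}|^{2}]\to 0$ and $\widehat{\mathbb{E}}[|\int_{t}^{T}(F^{n}-F^{\ast})(s,p_{s}^{\ast},q_{s}^{\ast},r_{s}^{\ast})\,ds|^{2}]\to 0$.

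Next I would apply the $G$-It\^o formula to $|\Delta p_{s}|^{2}$ on $[t,T]$, take $\widehat{\mathbb{E}}$, add and subtract $F^{n}(s,p_{s}^{\ast},q_{s}^{\ast},r_{s}^{\ast})$ inside the drift, and use the uniform Lipschitz bound (property $(2)$) together with the boundedness of $\pi_{t}$ by $\overline{\sigma}$; inserting the usual weight $e^{\beta s}$ with $\beta$ large enough absorbs the quadratic contributions of $\Delta q$ and $\int_{\Gamma}|\Delta r|^{2}\upsilon(d\theta)$ into the bracket produced by the It\^o formula, and a $\widehat{\mathbb{E}}$-Burkholder--Davis--Gundy inequality lets one pass to the supremum. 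What remains on the right-hand side is the terminal term, the frozen-driver term (both now $o(1)$ by the previous step), a term $C\int_{t}^{T}\widehat{\mathbb{E}}[|\Delta p_{s}|^{2}]\,ds$ to which Gronwall applies, and the cross term $2\,\widehat{\mathbb{E}}\int_{t}^{T}\langle\Delta p_{s},d\Delta k_{s}\rangle$ coming from the increasing processes. Handling this last term is the main obstacle: since $k^{n}$ and $k^{\ast}$ are only known to be non-decreasing continuous processes with $-k^{n},-k^{\ast}$ being $G$-martingales, one cannot dominate it by the $\mathcal{L}_{G}^{2}$-norms of $\Delta q$ and $\Delta r$; instead I would invoke the a priori estimates for $G$-BSDEs with jumps, which give a uniform bound $\widehat{\mathbb{E}}[|k_{T}^{n}|^{2}]+\widehat{\mathbb{E}}[|k_{T}^{\ast}|^{2}]\le C$ and, via the decomposition $\Delta k_{T}=\Delta p_{0}-\Delta p_{T}+\int_{0}^{T}(\ldots)\,ds-\int_{0}^{T}\Delta q\,dB-\int_{0}^{T}\!\int_{\Gamma}\Delta r\,\widetilde{N}$, the control $\widehat{\mathbb{E}}[|\Delta k_{T}|^{2}]\le C(\widehat{\mathbb{E}}[\sup_{s}|\Delta p_{s}|^{2}]+\widehat{\mathbb{E}}[|\Delta p_{T}|^{2}]+\widehat{\mathbb{E}}[|\int(F^{n}-F^{\ast})|^{2}]+\widehat{\mathbb{E}}[\int_{0}^{T}|\Delta q|^{2}\,ds]+\widehat{\mathbb{E}}[\int_{0}^{T}\!\int_{\Gamma}|\Delta r|^{2}\upsilon(d\theta)\,ds])$; one then closes the estimate by the standard bootstrap of Hu and Peng \cite{Hu} and, for the jump integrals, of H.~Ben Gherbal and B.~Mezerdi \cite{HBG}.

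A secondary point to keep track of is that \eqref{pnn} is written against $\widetilde{N}$ while \eqref{pmu} is written against the relaxed measure $\widetilde{N}^{\mu^{\ast}}$; this is harmless because the stochastic integrals of bounded integrands against the approximating measures converge quasi-surely to those against $\widetilde{N}^{\mu^{\ast}}$ by the proposition on orthogonal martingale measures, and this convergence has already been folded into hypothesis $(4)$ on the frozen drivers. Combining the energy estimate, the Gronwall step, the a priori bound on $\Delta k_{T}$, and the vanishing of the terminal and frozen-driver terms, and letting $n\to\infty$, yields $\lim_{n\to\infty}\widehat{\mathbb{E}}[|\Delta p|^{2}+\int_{t}^{T}|\Delta q|^{2}\,ds+\int_{t}^{T}\!\int_{\Gamma}|\Delta r|^{2}\upsilon(d\theta)\,ds+|\Delta k|^{2}]=0$, which is the assertion of the theorem.
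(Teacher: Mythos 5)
Your proposal is correct in substance and rests on the same underlying strategy as the paper's proof: an $L^{2}$ estimate for the difference of the two adjoint equations, driven by the Lipschitz property of the drivers $F^{n}$, the convergence of the terminal data $g_{x}(x_{T}^{n})\to g_{x}(x_{T}^{\mu^{\ast}})$ and of the frozen drivers, followed by a Gronwall-type closing. The bookkeeping, however, differs in two places. First, the paper works under each fixed $\mathbb{P}\in\mathcal{P}$ throughout, derives the three inequalities (\ref{pn})--(\ref{rn}) for $\mathbb{E}^{\mathbb{P}}|p^{n}-p^{\ast}|^{2}$, $\mathbb{E}^{\mathbb{P}}\int_{t}^{T}|q^{n}-q^{\ast}|^{2}ds$ and $\mathbb{E}^{\mathbb{P}}\int_{t}^{T}\!\int_{\Gamma}|r^{n}-r^{\ast}|^{2}\upsilon(d\theta)ds$ in terms of a single quantity $\alpha_{t}^{n}$, and only at the very end invokes ``the aggregation property'' to pass to $\widehat{\mathbb{E}}$; you instead upgrade hypotheses $(3)$--$(4)$ to the sublinear level at the outset via the weak-compactness contradiction of Lemma \ref{xhxetoile} and run the energy estimate directly under $\widehat{\mathbb{E}}$. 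Both devices address the same uniformity-over-$\mathcal{P}$ issue, and yours is the more explicit of the two. Second, and more substantively, the treatment of the increasing processes differs: the paper absorbs $(k_{T}^{\ast}-k_{T}^{n})+(k_{t}^{\ast}-k_{t}^{n})$ into $\alpha_{t}^{n}$ and asserts $\mathbb{E}^{\mathbb{P}}|\alpha_{t}^{n}|^{2}\to0$ by citing the stability theorem of \cite{HuLiHima} for G-BSDEs without jumps --- so the convergence of the $k$-part, which is itself part of the conclusion, is outsourced to that reference --- whereas you keep the cross term $2\,\widehat{\mathbb{E}}\int_{t}^{T}\langle\Delta p_{s},d\Delta k_{s}\rangle$ and propose to control it by an a priori bound on $\widehat{\mathbb{E}}[|k_{T}^{n}|^{2}]$ plus a bootstrap. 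Be aware that this cross term only yields a bound of order $(\widehat{\mathbb{E}}[\sup_{s}|\Delta p_{s}|^{2}])^{1/2}$, so the resulting inequality is not closable by Gronwall alone; in the end you will need the same a priori stability estimates from \cite{Hu}, \cite{HBG} and \cite{HuLiHima} that the paper leans on. Your remark that (\ref{pnn}) is driven by $\widetilde{N}$ while (\ref{pmu}) is driven by $\widetilde{N}^{\mu^{\ast}}$, and that this discrepancy must be folded into hypothesis $(4)$, identifies a point the paper passes over silently and is worth keeping.
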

	
	\begin{proof}
		Under every $\mathbb{P}\in\mathcal{P},$ we have
		\begin{align*}
			&  \mathbb{E}^{\mathbb{P}}\left\vert p_{t}^{n}-p_{t}^{\ast}\right\vert ^{2}+%
			{\displaystyle\int\limits_{t}^{T}}
			\left\vert q_{s}^{n}-q_{s}^{\ast}\right\vert ^{2}ds+%
			{\displaystyle\int\limits_{t}^{T}}
			{\displaystyle\int\limits_{\Gamma}}
			\left\vert r_{s}^{n}-r_{s}^{\ast}\right\vert ^{2}\upsilon(d\theta)ds\\
			&  \leq2\mathbb{E}^{\mathbb{P}}\left\vert \alpha_{t}^{n}\right\vert ^{2}\\
			&  +2\mathbb{E}^{\mathbb{P}}\left(
			{\displaystyle\int\limits_{t}^{T}}
			\left[  F^{n}(s,p_{s}^{n},q_{s}^{n},r_{s}^{n})-F^{n}(s,p_{s}^{\ast}%
			,q_{s}^{\ast},r_{s}^{\ast})\right]  ds\right)  ^{2}\\
			&  \leq2\mathbb{E}^{\mathbb{P}}\left\vert \alpha_{t}^{n}\right\vert
			^{2}+2(T-t)E%
			{\displaystyle\int\limits_{t}^{T}}
			\left\vert F^{n}(s,p_{s}^{n},q_{s}^{n},r_{s}^{n})-F^{n}(s,p_{s}^{\ast}%
			,q_{s}^{\ast},r_{s}^{\ast})\right\vert ^{2}ds
		\end{align*}

		with
		\[
		\alpha_{t}^{n}=p_{T}^{n}-p_{T}^{\ast}+%
		{\displaystyle\int\limits_{t}^{T}}
		\left[  F^{n}(s,p_{s}^{\ast},q_{s}^{\ast},r_{s}^{\ast})-F^{\ast}(s,p_{s}%
		^{\ast},q_{s}^{\ast},r_{s}^{\ast})\right]  ds+\left(  k_{T}^{\ast}-k_{T}%
		^{n}\right)  +\left(  k_{t}^{\ast}-k_{t}^{n}\right)  .
		\]

		Because of the assumption $2$, we get
		\begin{align}
			&  \left.  \mathbb{E}^{\mathbb{P}}\left\vert p_{t}^{n}-p_{t}^{\ast}\right\vert
			^{2}\leq\frac{2}{3}\mathbb{E}^{\mathbb{P}}\left\vert \alpha_{t}^{n}\right\vert
			^{2}+\frac{1}{6}%
			{\displaystyle\int\limits_{t}^{T}}
			\mathbb{E}^{\mathbb{P}}\left\vert p_{s}^{n}-p_{s}^{\ast}\right\vert
			^{2}ds\right. \label{pn}\\
			&  \left.  \mathbb{E}^{\mathbb{P}}%
			{\displaystyle\int\limits_{t}^{T}}
			\left\vert q_{s}^{n}-q_{s}^{\ast}\right\vert ^{2}ds\leq\frac{4}{3}%
			\mathbb{E}^{\mathbb{P}}\left\vert \alpha_{t}^{n}\right\vert ^{2}+\frac{2}{3}%
			{\displaystyle\int\limits_{t}^{T}}
			\mathbb{E}^{\mathbb{P}}\left\vert p_{s}^{n}-p_{s}^{\ast}\right\vert
			^{2}ds\right. \label{qn}\\
			&  \left.  \mathbb{E}^{\mathbb{P}}%
			{\displaystyle\int\limits_{t}^{T}}
			{\displaystyle\int\limits_{\Gamma}}
			\left\vert r_{s}^{n}-r_{s}^{\ast}\right\vert ^{2}\upsilon(d\theta)ds\leq
			\frac{4}{3}\mathbb{E}^{\mathbb{P}}\left\vert \alpha_{t}^{n}\right\vert
			^{2}+\frac{2}{3}%
			{\displaystyle\int\limits_{t}^{T}}
			\mathbb{E}^{\mathbb{P}}\left\vert p_{s}^{n}-p_{s}^{\ast}\right\vert
			^{2}ds\right.  . \label{rn}%
		\end{align}

		By the assumptions $3$, $4$ and the stability theorem of G-BSDE without jump,
		see \cite{HuLiHima}, we deduce that $\underset{n\rightarrow\infty
		}{lim}\mathbb{E}^{\mathbb{P}}\left\vert \alpha_{t}^{n}\right\vert ^{2}=0,$
		then $\underset{n\rightarrow\infty}{lim}\mathbb{E}^{\mathbb{P}}\left\vert
		p_{t}^{n}-p_{t}^{\ast}\right\vert ^{2}=0$ and $\underset{n\rightarrow
			\infty}{lim}\mathbb{E}^{\mathbb{P}}%
		{\displaystyle\int\limits_{t}^{T}}
		\left\vert q_{s}^{n}-q_{s}^{\ast}\right\vert ^{2}ds=0.$ Hence, by (\ref{rn})
		we get
		\[
		\underset{n\rightarrow\infty}{lim}\mathbb{E}^{\mathbb{P}}%
		{\displaystyle\int\limits_{t}^{T}}
		{\displaystyle\int\limits_{\Gamma}}
		\left\vert r^{n}-r^{\ast}\right\vert ^{2}\upsilon(d\theta)ds=0.
		\]
		Finally, by the aggregation property we conclude the desired result.
	\end{proof}
	
	\begin{proof}
		[Proof of Theorem \eqref{relaxedMP} ]By passing to the limit in inequality
		(\ref{main2}), and using lemma (\ref{Eke}), we get easily the inequality
		(\ref{main}).
	\end{proof}


\begin{thebibliography}{99}                                                                                               %
		
		
		\bibitem {Aldous}Aldous, D. (1989). Stopping Times and Tightness \textit{II.
			Ann. Prob}. 17: 586-595. https://www.jstor.org/stable/2244283.
		
		\bibitem {BDM}Bahlali, S., Djehiche, B., Mezerdi, B. (2007). The relaxed
		stochastic maximum principle in singular optimal control of diffusions.
		\textit{SIAM J. Control Optim.}, Vol. 46 , no. 2: 427-444. DOI.org/10.1016/j.sysconle.2008.08.003.
		
		\bibitem {BahDjeMez}Bahlali, S., Djehiche, B., Mezerdi, B. (2006).
		Approximation and optimality necessary conditions in relaxed stochastic
		control problems. \textit{International Journal of Stochastic Analysis}: 1-23,
		72762. DOI.org/10.1155/JAMSA/2006/72762.
		
		\bibitem {Lin}Bai, X.,Lin. Y. (2014). On the existence and uniqueness of
		solutions to stochastic differential equations driven by {G}-{B}rownian motion
		with integral-Lipschitz coefficients Vol. 30, No. 3: 589- 610. DOI.org/10.1007/s10255-014-0405-9.
		
		\bibitem {B13}Beissner, P.(2013). Radner equilibria under ambiguous
		volatility. working paper. \textit{Institute of Mathematical Economics
			Working}. Paper No. 493.
		
		\bibitem {BD18}Beissner, P., Denis, L.(2018). Duality and general equilibrium
		theory under Knightian uncertainty. \textit{SIAM Journal on Financial
			Mathematics}, 9(1): 381-400. DOI: 10.1137/17M1120877.
		
		\bibitem {HBG}B. Gherbal, H., Mezerdi, B.(2017). The relaxed stochastic
		maximum principle in optimal control of diffusions with controlled jumps.
		\textit{Afrika Statistika}, 12(2): 1287-1312. DOI: 10.16929/as/2017.1287.105.
		
		\bibitem {bensoussan}Bensoussan, A (1983) \textit{Lectures on stochastic
			control}. In Nonlinear Filtering and Stochastic Control. Springer Berlin
		Heidelberg, .
		
		\bibitem {CKJ03}Chen, Z., Kulperger, R., Jiang, L.(2003). Jensen inequality
		for g-expectation: part 1.\textit{\ Comptes Rendus Mathematiques},
		337(11):725-730. DOI.org/10.1016/j.crma.2003.09.017.
		
		\bibitem {CP00}Chen, Z., Peng, S.(2000). A general downcrossing inequality for
		g-martingales. \textit{Statistics and Probability Letters}, 46(2): 169-175.
		DOI: 10.1016/S0167-7152(99)00102-9.
		
		\bibitem {CHMP02}Coquet, F., Hu, Y., M\'{e}min, J.,Peng, S. (2002).
		Filtration-consistent nonlinear expectations and related g-expectations.
		\textit{Probability Theory and Related Fields}, 123(1): 1-27, May. DOI.org/10.1007/s004400100172.
		
		\bibitem {DHP11}Denis, L., Hu, M., Peng, S.(2011). Function spaces and
		capacity related to a sublinear expectation: application to G-Brownian motion
		paths. \textit{Potential Analysis}, 34(2): 139-161. DOI.org/10.1007/s11118-010-9185-x.
		
		\bibitem {DM06}Denis, L., Martini, C. (2006). A theoretical framework for the
		pricing of contingent claims in the presence of model uncertainty. \textit{The
			Annals of Applied Probability}, 16(2): 827-852. DOI:10.1214/105051606000000169.
		
		\bibitem {El-karoui}El Karoui, N., Du Huu, N., Jeanblanc-Picqu\'{e} M.(1987).
		Compactification methods in the control of degenerate diffusions: existence of
		an optimal control. \textit{Stochastics}, 20(3): 169-219. DOI.org/10.1080/17442508708833443.
		
		\bibitem {ENJ88}El Karoui, N., Nguyen, D. H., Jeanblanc-Picqu\'{e}, M.(1988).
		Existence of an optimal Markovian filter for the control under partial
		observations. \textit{SIAM journal on control and optimization}, 26(5):
		1025-1061. DOI.org/10.1137/0326057.
		
		\bibitem {EJ14}Epstein, L. G., Ji, S.(2014). Ambiguous volatility, possibility
		and utility in continuous time. \textit{Journal of Mathematical Economics},
		50: 269-282. DOI: 10.1016/j.jmateco.2013.09.005
		
		\bibitem {EJ13}Epstein, L. G., Shaolin, J.(2013). Ambiguous volatility and
		asset pricing in continuous time. \textit{The Review of Financial Studies},
		26(7): 1740-1786. arXiv:1301.4614.
		
		\bibitem {Fl}Fleming, W. H.(1977). Generalized solutions in optimal stochastic
		control, Differential Games and Control theory II. In : Proceedings of 2nd
		Conference, Univ. of Rhode Island, Kingston, RI, 1976, \textit{Lect. Notes in
			Pure and Appl. Math}., 30, Marcel Dekker, New York: 147-165.
		
		\bibitem {FN84}Fleming, W. H., Nisio,.M.(1984). On stochastic relaxed control
		for partially observed diffusions. \textit{Nagoya Mathematical Journal}, 93:
		71-108. https://projecteuclid.org/euclid.nmj/1118787430.
		
		\bibitem {G09}Gao,F.(2009). Pathwise properties and homeomorphic flows for
		stochastic differential equations driven by G-Brownian motion.
		\textit{Stochastic Processes and their Applications}, 119(10): 3356-3382. DOI.org/10.1016/j.spa.2009.05.010.
		
		\bibitem {HuLiHima}Hu, Y., Lin, Y., \& Hima, A. S. (2018). Quadratic backward
		stochastic differential equations driven by G-Brownian motion: Discrete
		solutions and approximation. Stochastic Processes and their Applications,
		128(11), 3724-3750.
		
		\bibitem {HJY14}Hu, M., Ji, S., Yang. S. (2014). A stochastic recursive
		optimal control problem under the G-expectation framework. \textit{Applied
			Mathematics Optimization}, 70(2): 253-278. DOI.org/10.1007/s00245-014-9242-8.
		
		\bibitem {Hu}Hu, Y., \& Peng, S. \textit{A stability theorem of backward
			stochastic differential equations and its application.} Comptes Rendus de
		l'Acad\'{e}mie des Sciences-Series I-Mathematics, 324(9). (1997), 1059-1064.
		
		\bibitem {HP} Hu, M.,and Peng, S. (2021). G-Lévy processes under sublinear expectations. Probability, Uncertainty and Quantitative Risk, 6(1), 1.
		
		\bibitem {KNH18a}Kebiri, O., Neureither, L., Hartmann, C.(2018). Singularly
		perturbed forward-backward stochastic differential equations: application to
		the optimal control of bilinear systems. DOI: 10.3390/computation6030041.
		
		\bibitem {KNH18b}Kebiri, O., Neureither, L., Hartmann, C.(2019) Adaptive
		importance sampling with forward backward stochastic differential equations.
		the Proceedings of the IHP Trimester.{Stochastic Dynamics Out of Equilibrium},
		Institute Henri Poincar. DOI: 10.3390/computation6030041.
		
		\bibitem {HKNR19}Hartmann, C., Kebiri, O., Neureither, L., Richter, L.(2019).
		Variational approach to rare event simulation using least-squares regression.
		Chaos: \textit{An Interdisciplinary Journal of Nonlinear Science}. 29. 063107.
		10.1063/1.5090271. DOI.org/10.1063/1.5090271.
		
		\bibitem {GBSDE}Hu, M., Ji, S., Peng, S., \& Song, Y. (2014). Backward
		stochastic differential equations driven by G-Brownian motion. Stochastic
		Processes and their Applications, 124(1), 759-784.
		
		\bibitem {Ikeda}Ikeda, N., \& Watanabe, S., \textit{Stochastic differential
			equations and diffusion processes}. North Holland, (2014).
		
		\bibitem {JS}J. Jacod, A.N. Shiryaev, \textit{Limit theorems for stochastic
			processes}. Springer Berlin, Heidelberg, New York, 1987.
		
		\bibitem {Kushner-article}Kushner, H.J.(2000). Jump-diffusions with controlled
		jumps: Existence and numerical methods. \textit{J. Math. Anal. Appl.}, 249(1):
		179-198. DOI.org/10.1006/jmaa.2000.6936.
		
		\bibitem {Mez}Mezerdi, B, \textit{Necessary conditions for optimality for a
			diffusion with a non-smooth drift.} Stochastics 24 (1988), no. 4, 305--326.
		
		\bibitem {BahMez1}Mezerdi, B., Bahlali, S., Bahlali, S, \textit{Approximation
			in optimal control of diffusion processes.} Random Oper. Stochastic Equations,
		Vol. \textbf{8} (2000), no. 4, 365--372.
		
		\bibitem {BahMez}Mezerdi, B., Bahlali, S.(2002) Necessary conditions for
		optimality in relaxed stochastic control problems,\textit{\ Stochastics and
			Stoch. Reports} 73 , no. 3-4: 201-218. DOI.org/10.1080/1045112021000025925.
		
		\bibitem {Pa12}Paczka, K.(2012). It\^{o} calculus and jump diffusions for
		G-L\'{e}vy processes. \textit{arXiv preprint arXiv}: 1211.2973, 196. arXiv:1211.2973v3.
		
		\bibitem {P97}Peng, S.(1997). BSDE and related g-expectation. El Karoui, N.,
		Mazliak, L. (1997). Backward Stochastic Differential Equation, \textit{Pitman
			Res. Notes Math}. Ser., vol. 364: 141-159.
		
		\bibitem {P99}Peng, S.(1999). Monotonic limit theorem of BSDE and nonlinear
		decomposition theorem of doob meyers type. \textit{Probability Theory and
			Related Fields}, 113(4): 473-499. DOI 10.1007/s004400050214.
		
		\bibitem {P04}Peng, S.(2004). Filtration consistent nonlinear expectations and
		evaluations of contingent claims. \textit{Acta Mathematicae Applicatae Sinica,
			English Series}, 20(2): 191-214. DOI.org/10.1007/s10255-004-0161-3.
		
		\bibitem {P007}Peng, S. (2007). G-Brownian motion and dynamic risk measure
		under volatility uncertainty. \textit{ArXiv e-prints}, nov. arXiv:0711.2834v1.
		
		\bibitem {P07}Peng, S.(2007). G-expectation, G-Brownian motion and related
		stochastic calculus of It\^{o} type. In \textit{Stochastic analysis and
			applications} . Springer, Berlin, Heidelberg: 541-567. arXiv:math/0601035v2.
		
		\bibitem {P08}Peng, S.(2008). Multi-dimensional G-Brownian motion and related
		stochastic calculus under G-expectation. \textit{Stochastic Processes and
			their Applications}, 118(12): 2223-2253. DOI.org/10.1016/j.spa.2007.10.015.
		
		\bibitem {peng-pre}Peng, S. (2019). Nonlinear expectations and stochastic
		calculus under uncertainty: with robust CLT and G-Brownian motion (Vol. 95).
		Springer Nature.
		
		\bibitem {P10}Peng, S.(2019). Nonlinear expectations and stochastic calculus
		under uncertainty - with robust CLT and G-Brownian motion. Springer-Verlag
		GmbH Germany. 10.1007/978-3-662-59903-7.
		
		\bibitem {RC18}Redjil, A., Choutri, S. E.(2018). On relaxed stochastic optimal
		control for stochastic differential equations driven by G-Brownian motion.
		\textit{ALEA, Lat. Am. J. Probab. Math. Stat}. 15: 201-212. DOI: 10.30757/ALEA.v15-09.
		
		\bibitem {RBK}Redjil, A, B. Gherbal, H.\& Kebiri, O (2021) Existence of
		relaxed stochastic optimal control for G-SDEs with controlled jumps,
		Stochastic Analysis and Applications, DOI: 10.1080/07362994.2021.1991809.
		
		\bibitem {Sk}Skorokhod, A.V.(1961). Studies in the theory of random processes,
		translated from the Russian Scripta Technica, Inc, Addison-Wesley Publishing
		Co., Inc., Reading, Mass., \textit{originally published in Kiev}.
		
		\bibitem {STZ11a}Soner, H. M., Touzi, N., Zhang, J.(2011). Martingale
		representation theorem for the G-expectation. \textit{Stochastic Processes and
			their Applications}, 121(2): 265-287. DOI.org/10.1016/j.spa.2010.10.006.
		
		\bibitem {STZ11b}Soner, M., Touzi, N., Zhang, J.(2011). Quasi-sure stochastic
		analysis through aggregation. \textit{Electronic Journal of Probability}, 16,
		1844-7: 169-219. DOI:10.1214/EJP.v16-950.
		
		\bibitem {base}Sun, Z., Zhang, X., \& Guo, J. (2017). A stochastic maximum
		principle for processes driven by G-Brownian motion and applications to
		finance. Optimal Control Applications and Methods, 38(6), 934-948.
		
		\bibitem {YH16}Xu, Y.(2016). Multidimensional dynamic risk measure via
		conditional g-expectation. \textit{Mathematical Finance}, 26(3): 638-673. DOI.org/10.1111/mafi.12062.
		
		\bibitem {YH18}Xu, Y.H.(2018). Robust valuation, arbitrage ambiguity and
		profit and loss analysis.\textit{\ Journal of the Operations Research Society
			of China}, 6(1): 59-83. DOI 10.1007/s40305-017-0181-3.
	\end{thebibliography}
\end{document}